  \newcommand{\textcyr}[1]{%
    {\fontencoding{OT2}\fontfamily{wncyr}\fontseries{m}\fontshape{n}%
     \selectfont #1}}
\newcommand{\Sha}{{\mbox{\textcyr{Sh}}}}
\newtheorem{lemma}{Lemma}
\newtheorem{prop}[lemma]{Proposition}
\newtheorem{cor}[lemma]{Corollary}
\newtheorem{thm}[lemma]{Theorem}
\newtheorem{example}[lemma]{Example}
\newtheorem{thm?}{Theorem?}
\newtheorem*{mainthm}{Main Theorem}
\newtheorem{remark}[lemma]{Remark}
\newcommand{\pp}{\mathfrak{p}}
\newcommand{\mm}{\mathfrak{m}}
\renewcommand{\gg}{\mathfrak{g}}
\newcommand{\F}{\ensuremath{\mathbb F}}
\newcommand{\Fp}{\ensuremath{\F_p}}
\newcommand{\PP}{\mathbb{P}}
\newcommand{\N}{\ensuremath{\mathbb N}}
\newcommand{\Q}{\ensuremath{\mathbb Q}}
\newcommand{\R}{\ensuremath{\mathbb R}}
\newcommand{\Z}{\ensuremath{\mathbb Z}}
\newcommand{\ra}{\ensuremath{\rightarrow}}
\newcommand{\lra}{\ensuremath{\longrightarrow}}
\newcommand{\Hom}{\operatorname{Hom}}
\newcommand{\Ker}{\operatorname{Ker}}
\newcommand{\Gal}{\operatorname{Gal}}
\newcommand{\Aut}{\operatorname{Aut}}
\newcommand{\tors}{\operatorname{tors}}
\newcommand{\sep}{\operatorname{sep}}
\newcommand{\Spec}{\operatorname{Spec}}
\newcommand{\gl}{\mathfrak{g}_l}
\newcommand{\Pic}{\operatorname{Pic}}
\newcommand{\gk}{\mathfrak{g}_k}
\newcommand{\car}{\operatorname{char}}
\newcommand{\ns}{\operatorname{\ns}}
\newcommand{\rank}{\operatorname{rank}}
\renewcommand{\gk}{\mathfrak{g}_K}
\renewcommand{\gl}{\mathfrak{g}_L}
\newcommand{\Res}{\operatorname{Res}}
\newcommand{\Sel}{Sel}
\newcommand{\G}{G}
\def\c#1{\text{$\mathcal{#1}$}}                                     
\def\f#1{\text{$\mathfrak{#1}$}}                                    
\newcommand{\hra}{\hookrightarrow} 
\renewcommand{\pmod}[1]{(\mbox{mod } #1 )}
\newcommand{\NS}{NS}
\renewcommand{\ns}{\operatorname{ns}}
\begin{document}

\title[On the index of genus one curve over IFG fields]{There are genus one curves of every index over every infinite, finitely generated field}

\author{Pete L. Clark}
\email{plclark@gmail.com}

\author{Allan Lacy}
\email{alacy@math.uga.edu}

\thanks{$\copyright$ Pete L. Clark and Allan Lacy, 2013}

\begin{abstract}
Every nontrivial abelian variety over a Hilbertian field in which the weak Mordell-Weil theorem holds admits infinitely many torsors with period any $n > 1$ which is not divisible by the characteristic.  The corresponding statement with ``period'' replaced by ``index'' is plausible but much more challenging.  We show that for every infinite, finitely generated field $K$, there is an elliptic curve $E_{/K}$ which admits infinitely many torsors with index any $n > 1$.
\end{abstract}

\maketitle

\tableofcontents

\section{Introduction}

\subsection{Review of the Period-Index Problem}
\noindent
Let $K$ be a field, with a choice of separable closure $K^{\sep}$ and algebraic closure $\overline{K}$.  Let $\mathfrak{g}_K = \Aut(K^{\sep}/K) = \Aut(\overline{K}/K)$ be the absolute Galois group of $K$.  
\\ \\
By a \textbf{variety} $X_{/K}$, we will mean a finite type integral scheme over $\Spec K$ such that $K$ is algebraically closed in $K(X)$.  
(Thus $X_{/\overline{K}}$ need not be a variety.)  A field extension $L/K$ is a \textbf{splitting extension} for $X$ if $X(L) \neq \varnothing$.  For a regular geometrically integral variety $X_{/K}$, we define the \textbf{index} $I(X)$ as the least positive degree of a $K$-rational zero-cycle on $X$. Equivalently, it is the gcd of all degrees of finite splitting extensions.  Clearly we have $I(X_{/L}) \mid I(X)$ for every extension $L/K$.
\\ \\
Let $M$ be a commutative $\gk$-module. For $i \in \N$ we have the Galois cohomology groups $H^i(K,M) = H^i(\gk,M)$. For $i \geq 1$, $H^i(K,M)$ is a torsion commutative group. For $i \geq 1$ and $\eta \in H^i(K,M)$, the \textbf{period} $P(\eta)$ as the order of $\eta$ in $H^i(K,M)$.  
\\ \\
If $L/K$ is algebraic, then $\mathfrak{g}_L = \Aut(K^{\sep}/L)$ is a closed subgroup of $\mathfrak{g}_K$ and there is a natural \textbf{restriction map}
\[ \Res_L: H^i(K,M) \ra H^i(L,M). \]
We shall also want to consider restriction maps associated to transcendental field extensions, and for this we need a bit more structure: let $A_{/K}$ be a locally finite type commutative group scheme. Then $M = A(K^{\sep})$ is a $\gk$-module, and we write $H^i(K,A)$ for $H^i(K,A(K^{\sep}))$. Let $L/K$ be any field extension. There is a field embedding $\iota: \overline{K} \hookrightarrow \overline{L}$. Any automorphism $\sigma \in \Aut(\overline{L}/L)$ fixes $K$ pointwise so restricts to an automorphism of $\overline{K}$. This gives a continuous group homomorphism $\gl \ra \gk$ and thus for all $i \geq 0$ a \textbf{restriction map}  
\[ \Res_L: H^i(K,A) \ra H^i(L,A).\]
The map $\Res_L$ is independent of the choice of $\iota$ \cite[$\S$ II.1.1, Remarque 2]{CG}. Whenever $\Res_L$ is defined, we put 
\[ \widetilde{H}^i(L/K,M) = \Ker \left( \Res_L: H^i(K,M) \ra H^i(L,M) \right). \]
 If $\eta \in \widetilde{H}^i(L/K,M)$, we say $L$ is a \textbf{splitting field} for $\eta$. By the definition of Galois cohomology, every $\eta \in H^i(K,M)$ has a finite Galois splitting extension $L/K$. We define the \textbf{index} $I(\eta)$ to be the greatest common divisor of all degrees $[L:K]$ for $L/K$ a finite splitting field of $\eta$. 
For all $\eta \in H^i(K,M)$ with $i \geq 1$ we have $P(\eta) \mid I(\eta)$ and $I(\eta)$ divides some power of $P(\eta)$ \cite[Prop. 11]{WCII}. To explore the relations between period and index is the \textbf{period-index problem in Galois cohomology}.
\\ \\
The following standard result reduces us to the case of prime power period. 

\begin{lemma}(Primary Decomposition)
\label{PDLEMMA}
Let $i,r \geq 1$, and let $\eta_1,\ldots,\eta_r \in H^i(K,M)$ be such that 
$P(\eta_1),\ldots,P(\eta_r)$ are pairwise coprime.   Let $\eta = \eta_1 + \ldots + \eta_r$.  Then 
\begin{equation}
\label{PDEQ1}
 P(\eta) = \prod_{j=1}^r P(\eta_j) 
\end{equation}
and 
\begin{equation}
\label{PDEQ2}
 I(\eta) = \prod_{j=1}^r I(\eta_j). 
\end{equation}
\end{lemma}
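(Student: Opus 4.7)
The plan is to deduce both (\ref{PDEQ1}) and (\ref{PDEQ2}) from two ingredients: a purely group-theoretic fact about sums in abelian groups, and the observation that $I(\eta_j)$ and $P(\eta_j)$ have the same prime support. The group-theoretic fact I would prove first: in any abelian group, the order of a sum $a_1 + \cdots + a_r$ equals $n_1 \cdots n_r$ whenever the $a_j$ have pairwise coprime orders $n_j$. One direction is clear; for the other, fix $j$ and suppose $m(a_1 + \cdots + a_r) = 0$. Multiplying by $\prod_{k \neq j} n_k$ kills every summand except the $j$th, leaving $m\bigl(\prod_{k \neq j} n_k\bigr) a_j = 0$, so $n_j \mid m \prod_{k \neq j} n_k$, and coprimality forces $n_j \mid m$. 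Applied in $H^i(K,M)$, this gives (\ref{PDEQ1}).

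For (\ref{PDEQ2}) I would prove divisibility in both directions. If $L/K$ is any finite splitting field for $\eta$, then $\sum_j \Res_L(\eta_j) = 0$ in $H^i(L,M)$, and each $\Res_L(\eta_j)$ has order dividing $P(\eta_j)$; the same pairwise-coprimality argument forces $\Res_L(\eta_j) = 0$ for every $j$, so $L$ splits each $\eta_j$ individually. Taking gcd over all such $L$ yields $I(\eta_j) \mid I(\eta)$ for every $j$. Since $P(\eta_j) \mid I(\eta_j)$ and $I(\eta_j)$ divides a power of $P(\eta_j)$ (\cite[Prop. 11]{WCII}), the indices $I(\eta_j)$ share the prime support of $P(\eta_j)$, hence are pairwise coprime, and so $\prod_j I(\eta_j) \mid I(\eta)$.

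For the reverse divisibility, I would choose for each $j$ a finite family of finite splitting fields $L_j^{(1)}, \ldots, L_j^{(k_j)}$ of $\eta_j$ with $\gcd_i [L_j^{(i)}:K] = I(\eta_j)$; such a finite family exists because the gcd of any set of positive integers is already realized on some finite subset. For each tuple $(i_1, \ldots, i_r)$, the compositum $L_1^{(i_1)} \cdots L_r^{(i_r)}$ in a fixed algebraic closure is a finite splitting field for $\eta$ of degree dividing $\prod_j [L_j^{(i_j)}:K]$. Taking the gcd over all tuples and applying the distributivity identity $\gcd_{(i_1,\ldots,i_r)} \prod_j m_j^{(i_j)} = \prod_j \gcd_{i_j} m_j^{(i_j)}$ (an easy induction from the two-variable case $\gcd(ab,ab',a'b,a'b') = \gcd(a,a')\gcd(b,b')$) yields $I(\eta) \mid \prod_j I(\eta_j)$, completing the proof.

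The main obstacle is this last step, since $I(\eta_j)$ is a gcd that need not be attained by any single splitting field. The Bezout-style reduction to a finite family, combined with distributivity of gcd over products, circumvents this cleanly; neither ingredient is deep, but both must be invoked explicitly to make the argument go through.
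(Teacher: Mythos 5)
Your proof of (\ref{PDEQ1}) is fine, and your argument that $\prod_j I(\eta_j) \mid I(\eta)$ (by noting that any splitting field of $\eta$ splits each $\eta_j$, via the coprimality of the local periods) is also correct. The paper itself only cites \cite[Prop. 11d)]{WCII} for (\ref{PDEQ2}), so there is no internal proof to compare against, but one step of yours does not go through as written.

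The problem is the claim that the compositum $L_1^{(i_1)} \cdots L_r^{(i_r)}$ has degree \emph{dividing} $\prod_j [L_j^{(i_j)}:K]$. That divisibility is false for general finite extensions: take $K = \Q$, $L_1 = \Q(\sqrt[3]{2})$, $L_2 = \Q(\omega\sqrt[3]{2})$ (with $\omega$ a primitive cube root of unity); both have degree $3$ but the compositum $\Q(\sqrt[3]{2},\omega)$ has degree $6$, which does not divide $9$. One always has the inequality $[L_1 L_2 : K] \leq [L_1:K][L_2:K]$, and divisibility does hold when all but one of the $L_j$ are Galois over $K$, but the index is a gcd over \emph{all} finite splitting fields, so you cannot freely restrict to Galois ones without losing the identity $\gcd_i [L_j^{(i)}:K] = I(\eta_j)$. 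Since the gcd-over-tuples computation relies on having $I(\eta)$ divide each $\prod_j [L_j^{(i_j)}:K]$, this gap is not cosmetic. A standard repair is to argue one prime at a time using the fact that $I$ divides a power of $P$: fix a prime $p$; by pairwise coprimality $p$ divides at most one $P(\eta_j)$, say $P(\eta_1)$; pick a finite splitting field $L$ of $\eta_1$ with $v_p([L:K]) = v_p(I(\eta_1))$; then $\Res_L \eta = \sum_{j\ge 2}\Res_L\eta_j$ has period prime to $p$, hence (since index divides a power of the period) index prime to $p$, so it is killed by a further extension $M/L$ of degree prime to $p$; then $M$ splits $\eta$ and $v_p([M:K]) = v_p(I(\eta_1)) = v_p(\prod_j I(\eta_j))$, giving $v_p(I(\eta)) \le v_p(\prod_j I(\eta_j))$.
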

\begin{proof}
Easy group theory gives (\ref{PDEQ1}).  For (\ref{PDEQ2}) see e.g. \cite[Prop. 11d)]{WCII}.
\end{proof}

\noindent Here we are interested in the case $M = A(K^{\sep})$ for an abelian variety $A_{/K}$. Then $H^1(K,A)$ is naturally isomorphic to the \textbf{Weil-Ch\^atelet group} of $A_{/K}$, whose elements are torsors under $A_{/K}$.  When $A = E$ is an elliptic curve, classes in $H^1(K,E)$ correspond to genus one curves $C_{/K}$ together with an identification $\Pic^0 C \cong E$.  

\subsection{Constructing WC-Classes With Prescribed Period}

Fix a field $K$ and consider the problem of characterizing all possible periods and indices of elements in $H^1(K,A)$, either for a fixed abelian variety $A_{/K}$ or as $A$ varies in a class of abelian varieties defined over $K$. The point is that it is much easier to understand this problem for the period than for the index. Indeed, in the earliest days of Galois cohomology Shafarevich established the following result.

\begin{thm}(Shafarevich \cite{Shafarevich57})
\label{SHATHM}
Let $K$ be a number field and $A_{/K}$ a nontrivial abelian variety.  For each $n > 1$, there are infinitely many classes $\eta \in H^1(K,A)$ with $P(\eta) = n$.
\end{thm}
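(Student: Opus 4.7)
My plan is to combine Kummer theory on $A$ with the Mordell--Weil theorem to translate the period problem into a statement about the infinitude of the Galois cohomology of a finite module, which is classical for number fields.

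By Lemma \ref{PDLEMMA}, it suffices to construct infinitely many classes of period exactly $p^k$ for each prime power $p^k$; given such classes $\eta_i$ for each prime power factor of $n$, their sum realizes period exactly $n$, and independently varying the $\eta_i$ yields infinitely many distinct classes. Fix $p^k$. From the Kummer sequence $0 \to A[p^k] \to A \xrightarrow{\cdot p^k} A \to 0$ we obtain
\[ 0 \to A(K)/p^k A(K) \to H^1(K, A[p^k]) \to H^1(K, A)[p^k] \to 0. \]
By the Mordell--Weil theorem, $A(K)/p^k A(K)$ is finite, so $H^1(K, A)[p^k]$ is infinite if and only if $H^1(K, A[p^k])$ is. The latter is an instance of the general principle that $H^1$ of any nontrivial finite Galois module over a number field is infinite, a standard consequence of Tate's global Euler characteristic formula together with Chebotarev (enlarging the ramification set $S$ produces new cohomology classes in $H^1_S(K, A[p^k])$ without bound).

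It remains to upgrade ``period dividing $p^k$'' to ``period exactly $p^k$''. For $k=1$, every nonzero element of the infinite group $H^1(K, A)[p]$ automatically has period $p$, which suffices. For $k \geq 2$, suppose $\eta_0 \in H^1(K, A)$ has period exactly $p^k$; then $\eta_0 + \xi$ has period exactly $p^k$ for every $\xi \in H^1(K, A)[p^{k-1}]$, and since $H^1(K, A)[p^{k-1}]$ is itself infinite (by the previous paragraph applied to $p^{k-1}$), translation produces infinitely many classes of period $p^k$. To produce a single such $\eta_0$, choose a finite prime $v$ of $K$ of good reduction, with residue characteristic distinct from $p$, that splits completely in $K(A[p^k])/K$ (infinitely many such $v$ exist by Chebotarev). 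Then $A[p^k] \subseteq A(\F_v) \subseteq A(K_v)$, and Tate local duality produces a class of period exactly $p^k$ in $H^1(K_v, A)$; lifting to $H^1(K, A)$ via the local-global (Poitou--Tate / Cassels--Tate) duality for abelian varieties yields $\eta_0$.

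The main obstacle is this last step: the Kummer sequence only bounds the period above by $p^k$, so producing classes of period \emph{exactly} $p^k$ requires genuine arithmetic input through the Tate-type local and global duality theorems. The reduction to prime powers, the Kummer sequence, Mordell--Weil finiteness, and the infinitude of $H^1(K, A[p^k])$ are all essentially formal, whereas the ``exact period'' refinement is where the arithmetic of number fields enters in a non-cosmetic way.
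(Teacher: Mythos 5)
Your broad framework (primary decomposition, Kummer sequence, Mordell--Weil finiteness, infinitude of $H^1(K,A[p^k])$) matches the start of the paper's argument, but the step where you actually produce a class of \emph{exact} period $p^k$ has a genuine gap, and it is precisely the gap the paper's machinery is built to close.

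You write that a local class of period $p^k$ in $H^1(K_v,A)$ can be lifted ``via the local-global (Poitou--Tate / Cassels--Tate) duality for abelian varieties'' to a global class $\eta_0$. That lifting is not available unconditionally. The Gonz\'alez-Avil\'es--Tan sequence quoted in $\S5.2$ of this paper shows that the localization map $H^1(K,A)[p^\infty] \to \bigoplus_v H^1(K_v,A)[p^\infty]$ has cokernel controlled by $T_p\operatorname{Sel}A^\vee$, whose vanishing is essentially equivalent to the finiteness of $\Sha(A^\vee)[p^\infty]$ --- an open conjecture for a general abelian variety over a number field. The paper does invoke this kind of local-to-global surjectivity, but only later (Corollary \ref{GATCOR}), and only after engineering a specific elliptic curve with $E(K)=\Sha(K,E)=0$ for the \emph{index} half of the problem; it cannot be used for an arbitrary nontrivial $A_{/K}$, which is what Theorem \ref{SHATHM} requires. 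Without $\eta_0$, your translation-by-$H^1(K,A)[p^{k-1}]$ step has nothing to translate, and the mere infinitude of $H^1(K,A)[p^k]$ does not by itself give elements of period exactly $p^k$ (this is exactly the pathology illustrated by the example at the start of $\S2$).

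The paper closes this gap by a different, and more elementary, route. It works directly with the finite module $A[n]$: passing to $L=K(A[n])$ and using inflation-restriction, the surjective $\gg_{L/K}$-invariant elements of $H^1(L,A[n])$ that lift to $H^1(K,A[n])$ correspond to split embedding problems for $\Gal(L/K)$ with kernel $A[n](L)$, and Ikeda's theorem over Hilbertian fields produces infinitely many linearly disjoint proper solutions. This yields an infinite LI-over-$\Z/n\Z$ subset of $H^1(K,A[n])$, and Lemma \ref{LILEMMA}b) then guarantees that infinitely many of their images in $H^1(K,A)[n]$ retain period exactly $n$, despite the finite kernel $A(K)/nA(K)$. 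This is unconditional (no $\Sha$ input), applies to all Hilbertian WMW fields rather than just number fields, and is exactly what makes Theorem \ref{BIGSHAFTHM} a genuine generalization. If you want to salvage a duality-based approach, you would need to either restrict to $A$ with $A^\vee(K)=0$ and finite $\Sha$, or replace the lifting step by a linear-independence argument at the level of $H^1(K,A[p^k])$ as the paper does.
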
 
\noindent
We wish to indicate a vast proving ground for the period-index problem in WC-groups by giving a generalization of Theorem \ref{SHATHM}. In order to do so we first define some suitable classes of fields.
\\ \\
A field $K$ is \textbf{MW} if for every abelian variety $A_{/K}$, the group $A(K)$ is finitely generated.  For $n \in \Z^+$, a field $K$ is \textbf{n-WMW} if for every abelian variety $A_{/K}$, the group $A(K)/nA(K)$ is finite.  (The nomenclature may be new, but the study of such fields in the Galois cohomology of abelian varieties goes back to \cite[$\S$ 5]{Lang-Tate58}.)  Finally, $K$ is \textbf{WMW} if it is $n$-WMW for all $n \in \Z^+$.  

\begin{remark}
\label{WMWREMARK}
a) $\F_p$ is a MW field: for any variety $V_{/\F_p}$, $V(\F_p)$ is finite.  \\
b) $\Q$ is a MW field: the \textbf{M}ordell-\textbf{W}eil Theorem.  \\
c) An algebraically closed field $K$ is WMW: for any abelian variety $A_{/K}$, $A(K)$ is a divisible commutative group, so $A(K)/nA(K) = 0$ for all $n \in \Z^+$. \\
d) $\R$ is a WMW field: by Lie theory, for any $g$-dimensional abelian variety $A_{/\R}$, $A(\R)$ is a (split) extension of the finite commutative group $\pi_0(A(\R))$ by the connected component of the identity, 
which is isomorphic to $(\R/\Z)^g$ and thus divisible.  \\
e) $\Q_p$ is a WMW field: by $p$-adic Lie theory (c.f. $\S$ 5.1), for any $g$-dimensional abelian variety $A_{/\Q_p}$, $A(\Q_p)$ is a split extension of a finite commutative group by the group $\Z_p^d$.  \\
f) For any prime number $p$, the Laurent series field $K = \F_p((t))$ is $n$-WMW for all $n$ prime to $p$.  However $K$ is \emph{not} $p$-WMW: see $\S$ 5.1 for stronger results.
\end{remark}

\begin{prop}
\label{LANGNERONPROP}
Let $L/K$ be a finitely generated field extension. \\
a) If $K$ is a MW field, so is $L$.  \\
b) If $K$ is a WMW field, so is $L$.
\end{prop}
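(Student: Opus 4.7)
My plan is to use d\'evissage to reduce the proposition to two elementary cases and then handle them via Weil restriction and the Lang--N\'eron theorem, respectively. Writing $L = K(x_1,\dots,x_n)$ and factoring the extension into a tower of simple subextensions, induction on $n$ reduces us to two cases: (i) $L/K$ is a finite algebraic extension, and (ii) $L = K(t)$ is purely transcendental of transcendence degree one.

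In Case (i), I would appeal to Weil restriction. For any abelian variety $A_{/L}$, the Weil restriction $B := \operatorname{Res}_{L/K}(A)$ is a commutative $K$-group scheme with $B(K) = A(L)$, and when $L/K$ is separable it is in fact an abelian variety over $K$. Under MW (resp.\ $n$-WMW) on $K$, the conclusion that $A(L)$ is finitely generated (resp.\ that $A(L)/nA(L)$ is finite) follows directly from the corresponding property of $B(K)$. The purely inseparable case can be absorbed by noting that a purely inseparable extension is fixed by some power of Frobenius, or, more uniformly, by folding it into the Lang--N\'eron step below after replacing $K$ by the separable closure of $K$ in $L$.

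For Case (ii), after replacing $K$ by its algebraic closure in $L$ (a finite extension, already handled) I may assume $K$ is algebraically closed in $L = K(t)$. Given an abelian variety $A_{/L}$, let $(B,\tau)$ denote its $L/K$-trace. The Lang--N\'eron theorem then asserts that
\[ Q := A(L)/\tau(B(K)) \]
is a finitely generated abelian group. Under the MW hypothesis on $K$, the group $B(K)$ (hence $\tau(B(K))$) is finitely generated, so $A(L)$ is an extension of a finitely generated group by a finitely generated group, proving (a). For (b), fix $n$ and apply $-\otimes_\Z \Z/n\Z$ to the short exact sequence $0 \to \tau(B(K)) \to A(L) \to Q \to 0$, obtaining the right-exact sequence
\[ \tau(B(K))/n\tau(B(K)) \to A(L)/nA(L) \to Q/nQ \to 0. \]
The leftmost term is a quotient of $B(K)/nB(K)$, finite by the $n$-WMW hypothesis on $K$, and the rightmost term is finite because $Q$ is finitely generated. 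Hence $A(L)/nA(L)$ is finite, proving (b).

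The genuine mathematical content is the Lang--N\'eron theorem; the d\'evissage and the snake-lemma-style argument from $Q$ being finitely generated are routine. The one subtlety I would want to handle carefully is the purely inseparable part of Case (i), which is mildly annoying but is well-known folklore.
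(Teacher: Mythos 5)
Your proof follows essentially the same d\'evissage as the paper's: Lang--N\'eron handles the transcendental part, Weil restriction handles the finite part, and the WMW case follows from the right-exactness of $-\otimes_\Z \Z/n\Z$ applied to the Lang--N\'eron short exact sequence. The only organizational difference is that you split into a tower of simple extensions, whereas the paper sets $K' = K(t_1,\dots,t_n)$ and treats $K'/K$ (regular) and $L/K'$ (finite) in two steps; these are equivalent.

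One concrete slip in your handling of the inseparable case: the second workaround you suggest --- ``folding it into the Lang--N\'eron step after replacing $K$ by the separable closure of $K$ in $L$'' --- cannot work, because the Lang--N\'eron theorem requires a \emph{regular} extension, and no nontrivial finite extension (separable or otherwise) is regular, since $K$ is not algebraically closed in it. Your first suggestion (exploit that $L^{p^e} \subseteq K$ for some $e$, i.e.\ transfer along Frobenius) is the right idea, but it needs more than a wave of the hand for the WMW claim: the natural inclusion $A(L) \hookrightarrow A'(K)$ into a Frobenius-descended abelian variety over $K$ does not by itself force $A(L)/nA(L)$ to be finite, and one should instead argue via the Frobenius isogeny $A \to A^{(p^e)}$, noting that $A^{(p^e)}$ descends to $K$ and that isogenous abelian varieties have commensurable weak Mordell--Weil groups. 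To be fair, the paper's own Step 2 silently takes the Weil restriction to be an abelian variety, which also presupposes $L/K'$ separable; when $L/K$ is separably generated one simply chooses a separating transcendence basis and the issue disappears, and that covers every use the paper makes of this proposition. So the gap you flag is real, but it is common to both your argument and the paper's, and the honest fix is the Frobenius isogeny, not Lang--N\'eron.
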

\begin{proof}
Let $\{t_1,\ldots,t_n\}$ be a transcendence basis for $L/K$, and put 
$K' = K(t_1,\ldots,t_n)$, so $K'/K$ is finitely generated regular and $L/K'$ is finite.  Thus we may deal separately with the two cases $L/K$ finitely generated regular and $L/K$ finite.   \\
Step 1: Let $L/K$ be finitely generated regular, and let $A_{/L}$ be an abelian variety.  By the Lang-N\'eron Theorem \cite{Lang-Neron59}, \cite[$\S$ 7]{Conrad06} there is an abelian variety $B_{/K}$ and a monomorphism $\tau: B(K) \hookrightarrow A(L)$ with $A(L)/\tau(B(K))$ is finitely generated.  So if $K$ is MW (resp. WMW), so is $L$. \\
Step 2: Suppose $L/K$ is finite. For an abelian variety $A_{/L}$, let $B_{/K}$ be the abelian variety obtained by Weil restriction.  Then $A(L) = B(K)$, hence $A(L)/nA(L) \cong B(K)/nA(K)$ for all $n \in \Z^+$.  Thus if $K$ is MW (resp. WMW), so is $L$.  
\end{proof}
\noindent
Combining Remark \ref{WMWREMARK} and Proposition \ref{LANGNERONPROP} yields: 

\begin{cor}
a) (N\'eron) Every finitely generated field is an MW field.  \\
b) Every field which is finitely generated over an algebraically closed field, $\R$ or $\Q_p$ is WMW. 
\end{cor}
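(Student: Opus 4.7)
My plan is to observe that this corollary is an essentially formal combination of the Remark and Proposition just stated, so the only real content is identifying the right base field in each case and applying transitivity via Proposition \ref{LANGNERONPROP}.

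For part (a), I would begin by recalling that a finitely generated field $K$ is, by definition, finitely generated (as a field, not as a ring or module) over its prime subfield, which is either $\F_p$ (if $\car K = p > 0$) or $\Q$ (if $\car K = 0$). Remark \ref{WMWREMARK}(a) records that $\F_p$ is MW, since every $\F_p$-variety has a finite set of $\F_p$-points; Remark \ref{WMWREMARK}(b) records that $\Q$ is MW, which is the classical Mordell--Weil theorem. So in either case the prime subfield is MW, and Proposition \ref{LANGNERONPROP}(a) — which asserts that the MW property is preserved by finitely generated extensions — immediately yields that $K$ itself is MW.

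For part (b), the argument is strictly parallel: Remark \ref{WMWREMARK}(c), (d), (e) tells us that any algebraically closed field, $\R$, and $\Q_p$ are each WMW (the inputs are, respectively, divisibility of $A(\overline{K})$, the Lie-theoretic structure of compact real Lie groups, and the $p$-adic Lie-theoretic structure of $A(\Q_p)$ as an extension of a finite group by $\Z_p^{\dim A}$). By Proposition \ref{LANGNERONPROP}(b), which says that the WMW property descends along finitely generated extensions, any $L$ finitely generated over such a base is WMW.

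There is no real obstacle here; the work has already been done in establishing Remark \ref{WMWREMARK} (which packages the inputs from classical arithmetic/Lie theory) and in proving Proposition \ref{LANGNERONPROP} (where the genuine content is the Lang--N\'eron theorem in the regular case and Weil restriction in the finite case). The corollary is a two-line bookkeeping statement: prime subfield is (W)MW, plus transitivity of the (W)MW property under finitely generated extensions.
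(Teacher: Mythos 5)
Your proposal is correct and matches the paper's argument exactly: the paper derives the corollary by combining Remark \ref{WMWREMARK} (identifying $\F_p$, $\Q$ as MW and algebraically closed fields, $\R$, $\Q_p$ as WMW) with Proposition \ref{LANGNERONPROP} (preservation of MW/WMW under finitely generated extensions). There is nothing to add.
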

\noindent
We also need the notion of a \textbf{Hilbertian field}.  We refer to the text \cite{Fried-Jarden} both for the definition of Hilbertian field (p. 219) and for the proofs of the following basic results, which are the only facts about Hilbertian fields needed in the proof of Theorem \ref{BIGSHAFTHM} below:
\\ \\
$\bullet$ A Hilbertian field must be infinite.\\
$\bullet$ The field $\Q$ is Hilbertian (Hilbert's Irreducibility Theorem) \cite[Thm. 13.4.2]{Fried-Jarden}.  \\
$\bullet$ If $K$ is Hilbertian and $L/K$ is finite, then $L$ is Hilbertian \cite[Prop. 12.3.3]{Fried-Jarden}.  \\
$\bullet$ For any field $K$, the rational function field $K(t)$ is Hilbertian 
\cite[Thm. 13.4.2]{Fried-Jarden}.
\\ \\
Thus every infinite, finitely generated field (or ``\textbf{IFG field}'') is Hilbertian.  
\\ \\
We can now state the following generalization of Shafarevich's theorem.  

\begin{thm}
\label{BIGSHAFTHM}
 Let $K$ be Hilbertian, and let $A_{/K}$ be a nontrivial abelian variety.  \\
a) If $n > 1$ is an integer indivisible by $\car K$ and such that $A(K)/nA(K)$ is finite, then there are infinitely many classes $\eta \in H^1(K,A)$ of period $n$.  \\
b) If $K$ is WMW, there are infinitely many classes $\eta \in H^1(K,A)$ with any given period $n > 1$.
\end{thm}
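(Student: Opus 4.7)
The plan is to apply the Kummer sequence associated to $[n]: A \to A$ to translate the problem into producing infinitely many classes in $H^1(K, A[n])$, and then to exploit the Hilbertian hypothesis to supply these. For part (a), write $n = \prod_j \ell_j^{m_j}$; since each $\ell_j$ is coprime to $\car K$ and $A(K)/\ell_j^{m_j} A(K)$ is a quotient of the finite group $A(K)/n A(K)$, Lemma \ref{PDLEMMA} (equation (\ref{PDEQ1})) reduces the task to producing, for each prime power $\ell^m \| n$, infinitely many classes of exact period $\ell^m$: fix one representative at each other prime and vary a single summand. So I assume $n = \ell^m$ with $\ell \ne \car K$.

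Since $n$ is invertible in $K^{\sep}$, the Kummer sequence yields
\begin{equation*}
0 \to A(K)/n A(K) \to H^1(K, A[n]) \to H^1(K, A)[n] \to 0.
\end{equation*}
The first term is finite by hypothesis, so it suffices to exhibit infinitely many classes in $H^1(K, A[n])$ whose images in $H^1(K, A)[n]$ are distinct and of exact period $n$. Let $L = K(A[n])$, a finite Galois extension with group $G \hookrightarrow \GL_{2g}(\Z/n\Z)$, over which $A[n] \cong (\Z/n\Z)^{2g}$ is a constant $\gl$-module; as a finite extension of $K$, the field $L$ is itself Hilbertian. Since $\ell \ne \car L$, the Hilbertian property gives $L$ infinitely many cyclic extensions of degree $n$: apply Hilbert's irreducibility to a generic $\Z/n\Z$-polynomial, e.g. $X^n - t$ after first passing to $L(\mu_n)$ and then descending. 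Fixing $P \in A[n](L)$ of order $n$, the assignment $\chi \mapsto (\sigma \mapsto \chi(\sigma) \cdot P)$ embeds this infinite family of characters into $H^1(L, A[n])$ as classes of exact period $n$.

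The principal obstacle I foresee is the descent from $L$ to $K$. The inflation-restriction sequence
\begin{equation*}
0 \to H^1(G, A[n](L)) \to H^1(K, A[n]) \to H^1(L, A[n])^G \to H^2(G, A[n](L))
\end{equation*}
has finite kernel and cokernel, but one must genuinely exhibit an infinite subset of $H^1(L, A[n])^G$ coming from our construction. A careful choice---either via $G$-invariant (or $G$-equivariant) characters $\chi$, or via corestriction from $L$ to $K$ while tracking images---should suffice, exploiting that the $G$-action on the infinite character family has only finite orbits. Finally, to guarantee exact period $n$ in $H^1(K, A)$ (not a proper divisor), note that classes of period dividing $n/\ell$ form the image of $H^1(K, A[n/\ell]) \to H^1(K, A[n])$, which by the analogous Kummer sequence is controlled by the finite group $A(K)/(n/\ell) A(K)$; a filtration argument along the $\ell$-power chain shows that only finitely many constructed classes collapse to smaller period. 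For part (b), the case $\car K \nmid n$ reduces to (a) since WMW gives $A(K)/n A(K)$ finite; when $\car K = p \mid n$, Lemma \ref{PDLEMMA} isolates the $p$-part, and the analogous argument runs in fppf cohomology with the finite flat group scheme $A[p^a]$, with Artin--Schreier(--Witt) theory supplying the required infinite family of cyclic $p^a$-extensions of $K$.
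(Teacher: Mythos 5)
Your setup matches the paper's: étale Kummer sequence, reduction to $H^1(K,A[n])$, passage to $L = K(A[n])$, and inflation–restriction with finite kernel and cokernel. But you explicitly flag the descent from $L$ to $K$ as ``the principal obstacle I foresee'' and then do not close it; this is precisely where the paper's argument has real content, so the proposal has a genuine gap.

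Concretely, your classes $\chi \cdot P$ for $\chi: \gg_L \to \Z/n\Z$ a character and $P \in A[n](L)$ a fixed point of order $n$ are not, in general, elements of $H^1(L,A[n])^{G}$, let alone of the image of $\Phi$. The $G = \gg_{L/K}$-action on $H^1(L,A[n]) = \Hom(\gg_L,A[n](L))$ is by $\eta \mapsto \sigma \circ \eta \circ \sigma^*$, so $\chi \cdot P$ is $G$-fixed only if $P$ is $G$-fixed (i.e.\ $P \in A(K)[n]$) and $\chi$ is $G$-equivariant; neither holds in general, and indeed in the paper's main application the chosen elliptic curves have $E(K)=0$. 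Corestriction does not rescue this either: $\text{Res}_L\circ\text{Cor}_{L/K} = \text{Norm}_G$ and $\text{Cor}_{L/K}\circ\text{Res}_L = [L:K]$, so when $\gcd([L:K],n) > 1$ there is no control on period or distinctness. Even if you had an infinite $G$-invariant family, you would still need it to lie (up to finitely many cosets) in the image of $\Phi$, i.e.\ you need the associated class in $H^2(G, A[n](L))$ to vanish. The paper's key observation is that the surjective elements of $\Phi(H^1(K,A[n]))$ parameterize proper solutions of the \emph{split embedding problem} $1 \to A[n](L) \to A[n](L) \rtimes G \to G \to 1$, and then Ikeda's theorem (in the strengthened form \cite[Lemma 16.4.2]{Fried-Jarden}, which over a Hilbertian field yields infinitely many linearly disjoint solutions) supplies the infinite LI family at once. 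That identification and the appeal to the embedding-problem literature is the heart of the proof and is missing from your proposal.

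A secondary point: for part (b) with $\car K = p \mid n$, your sketch via fppf cohomology and Artin--Schreier--Witt is too loose to assess; Artin--Schreier--Witt produces cyclic étale $p$-power covers, whereas $H^1_{\mathrm{fppf}}(K,A[p^a])$ sees the infinitesimal part of $A[p^a]$ as well, and the whole inflation--restriction/Hilbertian-specialization mechanism is a Galois-cohomological one that does not transfer without further work. The paper's own proof of Theorem \ref{BIGSHAFTHM} restricts at the outset to $n$ prime to $\car K$; the $p$-power case is handled separately in Theorem \ref{IMPROVEDBIGSHAFTHM} under the extra hypothesis that $A(K^{\sep})$ has a point of order $n$ (so that an \'etale subgroup of $A[n]$ of exponent $n$ exists), and in the Main Theorem by an entirely different local--global argument. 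Your fppf sketch would need to be an actual argument, not an analogy.
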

\noindent
We will prove Theorem \ref{BIGSHAFTHM} in $\S$ 3.   

\begin{remark}
A field $K$ is \textbf{PAC} (pseudo-algebraically closed) if every geometrically 
integral variety $V_{/K}$ has a $K$-rational point.  Thus all WC-groups 
over a PAC field are trivial.  Algebraically closed fields are PAC and WMW 
but not Hilbertian.  So the hypothesis ``$K$ is Hilbertian'' cannot be omitted from Theorem \ref{BIGSHAFTHM}.  There are fields which are Hilbertian and PAC: it follows from Theorem \ref{BIGSHAFTHM} that for every nontrivial abelian variety $A$ over such a field and all $n \geq 2$, $A(K)/nA(K)$ is infinite.  Thus the hypothesis ``$K$ is WMW'' cannot be omitted from Theorem \ref{BIGSHAFTHM}.
\end{remark}

\subsection{Constructing WC-Classes With Prescribed Index}
 \noindent
It is much harder to construct classes in $H^1(K,A)$ with prescribed index.  This problem was first studied in \cite{Lang-Tate58}.

\begin{thm}(Lang-Tate)
Let $n \in \Z^+$, and let $K$ be a field with infinitely many abelian extensions of exponent $n$.  Let $A_{/K}$ be an abelian variety such that $A(K)/nA(K)$ is finite and $A(K)$ contains at least one element of order $n$. Then $H^1(K,A)$ contains infinitely many elements of period $n$ and, in fact, infinitely many such that the corresponding homogeneous spaces have 
index $n$ as well as period $n$.
\end{thm}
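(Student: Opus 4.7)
The plan is to construct WC-classes by twisting via a $K$-rational $n$-torsion point. Let $P \in A(K)$ be a point of exact order $n$. Then $P$ determines a $\gk$-equivariant inclusion $\iota \colon \Z/n\Z \hookrightarrow A[n]$ sending $1 \mapsto P$. Composing with $A[n] \hookrightarrow A$ and passing to cohomology produces the homomorphism
\[
\Phi \colon H^1(K,\Z/n\Z) \xrightarrow{\iota_*} H^1(K, A[n]) \twoheadrightarrow H^1(K,A)[n],
\]
the last arrow being the Kummer surjection coming from $0 \to A[n] \to A \xrightarrow{[n]} A \to 0$. For a character $\chi$ of exact order $n$, its kernel cuts out a cyclic extension $L/K$ of degree $n$, and $\Res_L(\chi) = 0$; therefore $\eta_\chi := \Phi(\chi)$ is split by $L$, so $I(\eta_\chi) \mid n$. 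Since $P(\eta_\chi) \mid I(\eta_\chi) \mid n$, the theorem reduces to producing infinitely many such $\chi$ for which $P(\eta_\chi) = n$, as this forces $I(\eta_\chi) = n$ as well.

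The first step is to show that $\ker \Phi$ is finite. The Kummer sequence identifies the kernel of $H^1(K, A[n]) \twoheadrightarrow H^1(K,A)[n]$ with the image of $\delta \colon A(K)/nA(K) \to H^1(K, A[n])$, which is finite by the $A(K)/nA(K)$-finite hypothesis. The short exact sequence $0 \to \Z/n\Z \to A[n] \to A[n]/\iota(\Z/n\Z) \to 0$ exhibits $\ker \iota_*$ as a quotient of $(A[n]/\iota(\Z/n\Z))^{\gk}$, which is finite as a subgroup of the finite group $A[n]/\iota(\Z/n\Z)$. So $\ker \Phi$ is an extension of a finite group by a finite group, hence finite.

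The second step identifies $P(\eta_\chi)$. For $\chi$ of order $n$ with associated $L/K$ cyclic and generator $\sigma \in \Gal(L/K)$, the inflation--restriction sequence places $\eta_\chi$ in $H^1(\Gal(L/K), A(L))$, which by cyclic-group cohomology is isomorphic to $\{Q \in A(L) : N_{L/K}Q = 0\}/(\sigma - 1)A(L)$. Computing with the defining cocycle $\sigma^i \mapsto iP$ shows that $\eta_\chi$ corresponds to the class of $P$ under this identification. Therefore $P(\eta_\chi)$ equals the order of $P$ in $A(L)/(\sigma-1)A(L)$, and using additivity of $\Phi$ this translates to: $P(\eta_\chi) = n$ if and only if $(n/p)\chi \notin \ker \Phi$ for every prime $p \mid n$.

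The main obstacle is the last step: showing that infinitely many $\chi$ of order $n$ satisfy $(n/p)\chi \notin \ker \Phi$ for every prime $p \mid n$. The hypothesis that $K$ admits infinitely many abelian extensions of exponent $n$ guarantees infinitely many characters of exact order $n$ in $H^1(K,\Z/n\Z)$. For each prime $p \mid n$, the bad set $B_p = \Phi^{-1}(H^1(K,A)[n/p]) = \{\chi : (n/p)\chi \in \ker \Phi\}$ is a subgroup of $H^1(K,\Z/n\Z)$, expressed as a union of finitely many cosets of the subgroup $\{\chi : (n/p)\chi = 0\}$ (finiteness of the coset count coming from $|\ker \Phi| < \infty$). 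A covering/pigeonhole argument---using that a group with ``sufficiently many'' characters of order exactly $n$ cannot have its order-$n$ part swallowed by finitely many such cosets over the finitely many primes $p \mid n$---shows that infinitely many order-$n$ characters avoid $\bigcup_{p \mid n} B_p$. Because $\Phi$ has finite kernel, the resulting family $\{\eta_\chi\}$ contains infinitely many distinct classes in $H^1(K,A)$, all with period and index equal to $n$.
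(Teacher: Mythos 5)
The paper states this theorem as a quoted result and cites it to Lang--Tate \cite{Lang-Tate58} without giving a proof, so there is no in-paper argument to compare against; what follows is an assessment of your argument on its own terms. Your construction --- push a character $\chi$ of exact order $n$ forward along $1 \mapsto P$ and then into $H^1(K,A)$ via the Kummer surjection --- is indeed the Lang--Tate mechanism, your finiteness of $\ker\Phi$ (controlling $\ker\iota_*$ by the finiteness of $A[n]$ and the Kummer kernel by the finiteness of $A(K)/nA(K)$) is correct, and your reduction $P(\eta_\chi)=n \iff (n/p)\chi \notin \ker\Phi$ for every $p \mid n$ is right; the excursion through inflation--restriction and $\hat{H}^{-1}$ for cyclic groups is a detour you never actually use, since the equivalence is just $\Z$-linearity of $\Phi$.

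The genuine gap is the unwritten ``covering/pigeonhole'' step. Write $G = H^1(K,\Z/n\Z)$. You note correctly that $B_p$ is a union of finitely many cosets of $G[n/p]$, but the inference that the order-$n$ elements of $G$ cannot then be covered by $\bigcup_p B_p$ is not valid: a single coset of $G[n/p]$ can already contain every order-$n$ element. Concretely, after reducing to a prime power $n = p^a$ via Lemma \ref{PDLEMMA}, both the set $S$ of order-$p^a$ characters and $B_p$ are unions of cosets of $G[p^{a-1}]$, and one checks that $S \setminus B_p = \varnothing$ precisely when $p^{a-1}G \subseteq \ker\Phi$. This is compatible with $S$ being infinite: if, say, $G \cong (\Z/p\Z)^{(\infty)} \oplus \Z/p^a\Z$, then $S$ is infinite while $p^{a-1}G$ has order $p$, and nothing you have established prevents that order-$p$ subgroup from lying in the finite group $\ker\Phi$. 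In that situation every $\eta_\chi$ has period at most $p^{a-1}$ and the construction yields nothing of period $n$. To close the gap you need an extra input forcing $(n/p)G \not\subseteq \ker\Phi$ for each $p \mid n$ --- or, in the spirit of how this hypothesis is actually used in the paper, a stronger standing assumption such as the existence of an infinite subset of $G$ that is LI over $\Z/n\Z$ (what you get over a Hilbertian field), which makes the counting trivial. As written, the ``pigeonhole'' does not follow from ``infinitely many abelian extensions of exponent $n$'' together with finiteness of $A(K)/nA(K)$.
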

\noindent
It is interesting to compare this result to Theorem \ref{BIGSHAFTHM}: we have the same hypothesis on weak Mordell-Weil groups, and the hypothesis on abelian extensions holds over every Hilbertian field.  On the other hand, the hypothesis on the existence of a point $P \in A(K)$ of order $n$ is prohibitively strong: by Merel's Theorem, for each number field $K$, there are only finitely many $n \in \Z^+$ for which an elliptic curve $E_{/K}$ can have order $n$.  We expect that for any infinite, finitely generated field $K$ and $d \in \Z^+$, torsion is uniformly bounded on $d$-dimensional abelian varieties $A_{/K}$.
\\ \\
Lang and Tate asked whether for every positive integer $n$ there is some elliptic curve $E_{/\Q}$ and class $\eta \in H^1(\Q,E)$ with index $n$. This question remained wide open for years until W. Stein showed \cite{Stein02} that for every number field $K$ and every positive integer $n$ which is not divisible by $8$, there is an elliptic curve $E_{/K}$ and a class $\eta \in H^1(K,E)$ with $I(\eta)= P(\eta) = n$. In \cite{Crelle} the first author showed that for any number field $K$ and every $n \in \Z^+$ there is an elliptic curve $E_{/K}$ and a class $\eta \in H^1(K,E)$ with $I(\eta) = P(\eta) = n$.  
\\ \\
Thus the answer to the question of Lang and Tate is affirmative, but it took almost 50 years to get there.  More recently S. Sharif established a result which we view as a ``complete solution of the period-index problem for elliptic curves over number fields''.  

\begin{thm}(Sharif \cite{Sharif12}) 
\label{BIGSHARIFTHM}
Let $E_{/K}$ be an elliptic curve over a number field, and let $P,I \in \Z^+$ be such that $P \mid I \mid P^2$.  Then there is a class $\eta \in H^1(K,E)$ with $P(\eta) = P$ and $I(\eta) = I$.
\end{thm}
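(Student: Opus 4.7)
The plan is to first apply the Primary Decomposition Lemma (Lemma \ref{PDLEMMA}) to reduce to the prime power case. Writing $P = \prod_i p_i^{a_i}$ and $I = \prod_i p_i^{b_i}$, the constraint $P \mid I \mid P^2$ is equivalent to $a_i \leq b_i \leq 2a_i$ for each $i$, so it suffices to construct, for each prime $p = p_i$ and each admissible pair $(a,b)$ with $a \leq b \leq 2a$, a class $\eta_i \in H^1(K,E)$ of period $p^a$ and index $p^b$; summing these gives the desired class.

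For a fixed prime power $p^a$ I would work with the Kummer descent sequence
\[
0 \to E(K)/p^aE(K) \to H^1(K, E[p^a]) \to H^1(K,E)[p^a] \to 0.
\]
Every $\eta \in H^1(K,E)[p^a]$ admits a lift $c \in H^1(K, E[p^a])$, and via the Weil pairing $E[p^a] \otimes E[p^a] \to \mu_{p^a}$ the cup product $c \cup c$ lies in $H^2(K,\mu_{p^a}) = \Br(K)[p^a]$. This is the \emph{period-index obstruction} in the O'Neil--Clark framework for genus-one curves: the quotient $I(\eta)/P(\eta)$ is measured by the order of $c \cup c$, which also explains the necessary bound $I \mid P^2$. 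The task then becomes producing a lift $c$ for which $\eta$ has order exactly $p^a$ in $H^1(K,E)$ while $c \cup c$ has order exactly $p^{b-a}$ in $\Br(K)$.

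I would realize these two simultaneous conditions by a local-global analysis. By Tate local duality, at each place $v$ the possible values of the local period of $c_v$ and the local invariant $\mathrm{inv}_v(c_v \cup c_v)$ can be explicitly described; and by Poitou--Tate one can globalize any finite collection of local choices subject to the reciprocity relation $\sum_v \mathrm{inv}_v(c \cup c) = 0$. Choosing a large enough finite set $S$ of places (containing all places of bad reduction and those above $p\infty$), and using Chebotarev together with the Hilbertian property of $K$ to enlarge $S$ with auxiliary primes where local data can be tuned freely (in the spirit of the proof of Theorem \ref{BIGSHAFTHM}), one should obtain enough slack to force the global Brauer class $c \cup c$ to have order exactly $p^{b-a}$. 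The chief obstacle, I expect, is the \emph{coupled} control of period and index: modifying $c$ by a class from $E(K)/p^aE(K)$ preserves $\eta$ but alters $c \cup c$ in a structured way, so one must find a coset representative simultaneously achieving the correct order on both sides, and one must verify that the global index actually equals the $\mathrm{lcm}$ of the prescribed local indices rather than some proper divisor of $p^b$ -- which requires a careful Cassels--Tate-style analysis of the localization map on $H^1(K,E)[p^a]$.
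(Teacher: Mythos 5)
The paper does not prove Theorem~\ref{BIGSHARIFTHM}: it is stated as a citation to Sharif's paper~\cite{Sharif12} and is used purely as background and motivation for the Main Theorem. There is therefore no internal proof to compare your sketch against. That said, your outline does correctly identify the main ingredients of Sharif's actual argument: reduction to prime powers via primary decomposition, the period-index obstruction map realized as a cup product $c \cup c$ through the Weil pairing landing in $\Br(K)[p^a]$ (the O'Neil--Clark machinery, which the first author of the present paper developed in~\cite{WCI},~\cite{WCII},~\cite{WCIV}), and a local-global construction using Tate local duality and Poitou--Tate to prescribe local data at a finite set of places. You also correctly flag the genuinely delicate point, namely that changing the lift $c$ of $\eta$ by an element of $E(K)/p^a E(K)$ changes $c \cup c$, so period and index must be controlled simultaneously, and that one must rule out the global index being a proper divisor of the lcm of the local indices. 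As written, though, this remains a strategy sketch rather than a proof: the key assertions that one can ``tune local data freely'' at auxiliary primes and that ``one should obtain enough slack'' are precisely where the technical content of~\cite{Sharif12} lives, and would need to be established, not assumed. For the purposes of the present paper none of this is needed, since Theorem~\ref{BIGSHARIFTHM} is taken as a known external result.
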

\noindent
We find it plausible that the analogue of Sharif's result should hold for every elliptic curve over every Hilbertian, WMW field $K$.  However, this looks very challenging to prove.  Consider the case of global fields of positive characteristic: i.e., finite field extensions of $\F_p(t)$.  In this case the methods of \cite{Sharif12} work to establish the analogue of Theorem \ref{BIGSHARIFTHM} as long as we require $\gcd(P,p) = 1$.  The case of $p$-power period in characteristic $p$ leads to additional technicalities involving an explicit form of the period-index obstruction map in flat cohomology.  (The first author and S. Sharif have been working on this problem for several years.)  We find it of interest to retreat to the construction of genus one curves with prescribed period over various fields, but where we get to choose the Jacobian elliptic curve to our advantage.  
\\ \\
We can now state the main result of this paper.

\begin{mainthm}
\label{MAINTHM}
Let $K$ be any infinite, finitely generated field.  There is an elliptic curve $E_{/K}$ such that for all $n > 1$, there are infinitely many classes $\eta \in H^1(K,E)$ with $I(\eta) = P(\eta) = n$.
\end{mainthm}

\begin{remark}
The period-index problem has additional subtleties when the period is divisible by the characteristic of the field, e.g. necessitating the use of flat cohomology.  \emph{In Theorem \ref{MAINTHM}, the integer $n$ is allowed to be divisible by the characteristic.}  We feel that a substantial part of the value of Theorem \ref{MAINTHM} lies in its inclusion of this positive characteristic case.
\end{remark}


\begin{remark}
In the statement of the Main Theorem, it is easy to replace the elliptic curve $E_{/K}$ with a $g$-dimensional abelian variety $A_{/K}$ for any fixed $g \geq 1$: for any torsor $C$ under $E$ with $P(C) = I(C) = n$, 
$C \times E^{g-1}$ is a torsor under $E^g$ with $P(C) = I(C) = n$.  It would of course be more interesting to treat abelian varieties and torsors which are not products.
\end{remark}

\begin{remark}
The hypothesis that $K$ is infinite is necessary: by a classic result of Lang \cite{Lang56}, $H^1(\F_q,A) = 0$ for every abelian variety $A$ over a finite field $\F_q$.
\end{remark}

\section{Linear Independence in Torsion Groups}
 \noindent
At several points in the coming proofs, we will find ourselves in the following situation: we have an integer $n > 1$, a commutative group $A$, a subset $S \subset A$ such that every element of $S$ has order $n$, and a homomorphism of groups $R: A \ra B$.  Under what circumstances does $R(S)$ have infinitely many elements of order $n$?  
\\ \\
It suffices for $R$ to be injective, but in our applications this hypothesis is too strong.  On the other hand, if $\Ker R$ is too large then we may have $S \subset \Ker R$.  So it is natural to assume $S$ is infinite and $\Ker S$ is finite.  In this case $R$ has finite fibers and thus $R(S)$ is infinite.  However, it still need not be the case that $R(S)$ has infinitely many elements of order $n$. 

\begin{example}
Let $p$ be a prime and $I$ an infinite set.  Put $A = \left(\bigoplus_{i \in I} \Z/p\Z \right) \oplus \Z/p^2\Z$,  $K = \Z/p\Z^2$, $B = A/K$, and $R: A \ra B$ be the quotient map.  Then \[S = \{1 +x \mid x \in \bigoplus_{i \in I} \Z/p\Z \} \] has cardinality $\# I$ and consists of elements of order $p^2$, but $R(S) \subset B = B[p]$ consists of elements of order dividing $p$.
\end{example}
\noindent
Thus we see that some stronger group-theoretic hypothesis must be imposed, which serves to motivate the definitions of the next paragraph.
\\ \\
Let $n \geq 2$, let $H$ be a commutative group, and let $S = \{\eta_s\} \subset H[n]$.  For $r \in \Z^+$, we say $S$ is \textbf{r-LI} over $\Z/n\Z$ if every $r$-element subset $\{\eta_1,\ldots,\eta_r\}$ is 
linearly independent over $\Z/n\Z$: if for $a_1,\ldots,a_r \in \Z$ we have 
$a_1 \eta_1 + \ldots + a_r \eta_r = 0$, then $a_1 \equiv \ldots \equiv a_r \equiv 0 \pmod{n}$.  Equivalently, the subgroup generated by $\eta_1,\ldots,\eta_r$ has cardinality $r^n$.  We say $S$ is \textbf{LI} over $\Z/n\Z$ if it is $r$-LI over $\Z/n\Z$ for all $r \in \Z^+$.  
\\ \\
In particular, $S$ is $1$-LI over $\Z/n\Z$ iff every element of $S$ has order $n$, and $S$ is 
$2$-LI over $\Z/n\Z$ iff for any distinct elements $\eta_i \neq \eta_j \in S$, we have $\langle \eta_i \rangle \cap \langle \eta_j \rangle = \{0\}$.

\begin{lemma}
\label{LILEMMA}
Let $n \in \Z^+$, let $R: H_K \ra H_L$ be a homomorphism of commutative groups, and let $S \subset H_K[n]$ be a subset.  \\
a) If $\Ker R = 0$, then $\# R(S) = \# S$, and for each $r \in \Z^+$, $S$ is $r$-LI over $\Z/n\Z$ if and only if $R(S)$ is $r$-LI over $\Z/n\Z$.  \\
b) Suppose $\Ker R$ is finite and $S$ is infinite and $2$-LI over $\Z/n\Z$.  Then there is a subset $S' \subset S$ such that $R(S')$ is an infinite $1$-LI over $\Z/n\Z$ subset of $H_L$. 
\end{lemma}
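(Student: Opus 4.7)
Part (a) is immediate from $\Ker R = \{0\}$, which makes the restriction $R|_S$ injective, so $\#R(S) = \#S$. For $r$-LI, any relation $\sum_i a_i R(\eta_i) = 0$ is equivalent via injectivity of $R$ to $\sum_i a_i \eta_i = 0$, so $R$ both preserves and reflects $r$-linear independence over $\Z/n\Z$.

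For part (b), the starting observation is that every $\eta \in S$ has order $n$ (since $2$-LI implies $1$-LI whenever $|S| \geq 2$), and for such $\eta$ the image $R(\eta)$ has order $n / |\Ker R \cap \langle \eta \rangle|$; hence $R(\eta)$ has order exactly $n$ iff $\Ker R \cap \langle \eta \rangle = \{0\}$. My plan is to take $S' := \{\eta \in S : \Ker R \cap \langle \eta \rangle = \{0\}\}$ and show that $T := S \setminus S'$ is finite. Granted this, $S'$ is infinite, $R(S')$ consists of elements of order exactly $n$ (so is $1$-LI over $\Z/n\Z$), and $R(S')$ is infinite because the fibers of $R|_{S'}$ have cardinality at most $|\Ker R|$.

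To prove $T$ is finite, for each $\eta \in T$ I choose $a_\eta \in \{1, \ldots, n-1\}$ with $a_\eta \eta \in \Ker R \setminus \{0\}$. If distinct $\eta, \eta' \in T$ satisfied $a_\eta \eta = a_{\eta'} \eta'$, then $a_\eta \eta - a_{\eta'} \eta' = 0$ with both coefficients nonzero mod $n$, contradicting the $2$-LI of $\{\eta, \eta'\}$. So $\eta \mapsto a_\eta \eta$ injects $T$ into the finite set $\Ker R \setminus \{0\}$, and $T$ is finite.

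The step to isolate --- and really the only one going beyond routine bookkeeping --- is recognizing that $2$-LI is precisely the right hypothesis: it is what forces the nonzero parts of $\Ker R \cap \langle \eta \rangle$ for distinct $\eta \in S$ to be pairwise disjoint inside $\Ker R$, converting finiteness of $\Ker R$ into finiteness of the bad set $T$. Under the weaker hypothesis of $1$-LI alone the example in the paper shows the conclusion can fail, so this is the genuinely load-bearing part of the argument.
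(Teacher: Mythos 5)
Your proof is correct. Part (a) matches the paper's one-line argument. For part (b) you and the paper are exploiting the same underlying phenomenon --- that $2$-LI forces the nonzero ``obstructing'' multiples of distinct elements of $S$ to be \emph{distinct} elements of $\Ker R$, so the finite set $\Ker R$ can only accommodate finitely many bad $\eta$ --- but the packaging differs in a way worth noting. The paper first invokes the primary-decomposition Lemma to reduce to $n=\ell^a$ a prime power, precisely so that it can name a single canonical annihilating multiplier $\ell^{a-1}$ and run a pigeonhole count on the $N+1$-element tuples $(\ell^{a-1}\eta_1,\dots,\ell^{a-1}\eta_{N+1})$ in $\Ker R$. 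You instead choose the multiplier $a_\eta$ adaptively for each $\eta$ in the bad set $T$, which removes the need for the prime-power reduction entirely and upgrades the pigeonhole to a clean injection $T \hookrightarrow \Ker R\setminus\{0\}$ (incidentally giving the marginally sharper bound $\#T\leq \#\Ker R - 1$). Both versions then conclude identically, using finiteness of the fibers of $R$ to get infinitude of $R(S')$. Your route is arguably the cleaner of the two since it works uniformly in $n$; the paper's is perhaps more in keeping with its running practice of reducing to prime-power period whenever convenient. Either way, you have correctly identified $2$-LI as the load-bearing hypothesis and located exactly where it enters.
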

\begin{proof}
Primary decomposition (\ref{PDLEMMA}) reduces us to the case $n = \ell^a$ a prime power.  \\
a) If $\Ker R = 0$, then $R$ is an injection of $n$-torsion groups.  Thus $R: S \ra R(S)$ is a bijection and all linear independence relations are preserved.  \\
b) Let $N = \# \Ker R$.  Let $\eta_1,\ldots,\eta_{N+1}$ be any $N+1$ elements of $S$.  We claim that for at least one $i$, $1 \leq i \leq N+1$, $R(\eta_i)$ has order $n$: if not, then for all $i$ we have 
\[ 0 = \ell^{a-1} R(\eta_i) = R(\ell^{a-1} \eta_i), \]
so $\ell^{a-1} \eta_i \in \Ker R$ for all $i$.  By the Pigeonhole Principle we must have $\ell^{a-1} \eta_i - \ell^{a-1} \eta_j = 0$ for some $i \neq j$, contradicting the hypothesis that $S$ is $2$-LI over $\Z/n\Z$.  This constructs an infinite $S' \subset S$ such that for all $\eta \in S$, $R(\eta)$ has order $n$.   Since $\Ker R$ is finite, all the fibers of $R$ are finite, and thus $R(S')$ is infinite.  
\end{proof}

\section{The Proof of Theorem \ref{BIGSHAFTHM}}

\subsection{The Proof}
\noindent
Step 0: Let $n > 1$ be indivisible by the characteristic of $K$; let $A_{/K}$ be an abelian variety of dimension $g \geq 1$ such that $A(K)/nA(K)$ is finite.  We write $A[n]$ for both the group scheme -- which is \'etale -- and the associated $\gk$-module $A[n](K^{\sep})$ with underlying commutative group $(\Z/n\Z)^{2g}$.  Recall the \textbf{\'etale Kummer sequence}
\[ 0 \ra A(K)/nA(K) \ra H^1(K,A[n]) \ra H^1(K,A)[n] \ra 0. \]
We {\sc claim} that the $n$-torsion group $H^1(K,A[n])$ has an infinite LI over $\Z/n\Z$ subset $S$.  (We only need $S$ to be $2$-LI over $\Z/n\Z$.  But the more 
general result is no harder to prove.)  Applying Lemma \ref{LILEMMA}b) with $H_K = H^1(K,A[n])$, $H_L = H^1(K,A)[n]$ and $R$ the natural map between them, we get an infinite subset $S' \subset S$ whose image in $H^1(K,A)[n]$ is $1$-LI over $\Z/n\Z$.  In other words, $H^1(K,A)$ has infinitely many elements of order $n$, which is what we want to show.  In the remainder of the argument we will establish this claim.  \\
Step 1: Let $L = K(A[n](K^{\sep}))$ -- the fixed field of the kernel of the $\gk$-action on $A[n]$ -- let $\mathfrak{g}_{L/K} = \Aut(L/K)$, and consider the associated inflation-restriction sequence \cite[I.2.6(b)]{CG}
\begin{equation}
\label{INFRES}
 0 \ra H^1(\gg_{L/K},A[n](L)) \ra H^1(K,A[n]) \stackrel{\Phi}{\ra} H^1(L,A[n])^{\mathfrak{g}_{L/K}} 
\stackrel{\partial}{\ra} H^2(\gg_{L/K},A[n](L)). 
\end{equation}
Since $H^i(\gg_{L/K},A[n](L))$ is finite for all $i$, the kernel and cokernel of $\Phi$ are finite.  We will construct an infinite LI over $\Z/n\Z$ subset of $S' =\Phi(H^1(K,A[n]))$.  For each $\eta'_i \in S'$, choose $\eta_i \in \Phi^{-1}(\eta_i')$.  Then $S = \{ \eta_i \}_{i \in S'}$ is an infinite LI over $\Z/n\Z$ subset of $H^1(K,A[n])$.  \\
Step 2:  We have $H^1(L,A[n]) = \Hom(\mathfrak{g}_L,A[n](L))$, so the surjective elements $\eta \in H^1(L,A[n])$  -- i.e., such that $\eta(\mathfrak{g}_L) = A[n](L)$ -- parameterize Galois extensions $M/L$ with $\Gal(M/L) \cong A[n](L) \cong (\Z/n\Z)^{2g}$ together with an isomorphism $\Gal(M/L) \cong A[n](L)$.  The natural action of $\mathfrak{g}_{L/K}$ on this set of order $n$ elements in $H^1(L,A[n])$ consists of precomposing $\chi: \mathfrak{g}_L \ra A[n](L)$ with the automorphism $\sigma^*$ of $\mathfrak{g}_L$ obtained by restricting conjugation by $\sigma \in \mathfrak{g}_K$ to the normal subgroup $\mathfrak{g}_L$.  (Because $A[n](L)$ is commutative, the map $\sigma^*$ depends only on the coset of $\sigma$ modulo $\mathfrak{g}_L$.)  A class $\eta \in H^1(L,A[n])$ is fixed by $\mathfrak{g}_{L/K}$ if and only if the extension $(L^{\sep})^{\Ker \eta}/K$ is Galois.  So we have shown that the surjective elements of $H^1(L,A[n])^{\mathfrak{g}_{L/K}}$ correspond to \textbf{liftings} of the Galois extension $L/K$ to Galois extensions $M/K$.  For $\eta' \in H^1(L,A[n])^{\mathfrak{g}_{L/K}}$, the class $\partial \eta' \in H^2(\mathfrak{g}_{L/K},A[n](L))$ is the class of the extension 
\[ 1 \ra \Gal(M/L) \ra \Gal(M/K) \ra \mathfrak{g}_{L/K} \ra 1, \]
so by exactness of (\ref{INFRES}), we get: $\eta' \in \Phi(H^1(K,A[n]))$ if and only if the above extension splits.  Thus: the surjective elements of $\Phi(H^1(K,A[n]))$ parameterize \textbf{split extensions} of $\mathfrak{g}_{L/K}$ by $A[n](L)$.  \\
Step 3: It was shown by Ikeda \cite{Ikeda60} that over a Hilbertian field, every split embedding problem with abelian kernel $A$ has a proper solution: this means precisely that there is at least one surjective (hence order $n$) element of $\Phi(H^1(K,A[n]))$.  The proof of Ikeda's Theorem (see \cite[$\S$ 16.4]{Fried-Jarden} for a nice modern treatment) goes by specializing a regular $A$-Galois cover of $L(t)$.  Over a Hilbertian field, a regular Galois covering not only has an irreducible specialization but has an infinite linearly disjoint set of irreducible specializations, so we get an infinite set $\{M_i/L\}$ of $A[n](L)$-Galois extensions of $L$ such that $M_i/K$ is Galois and $\Gal(M_i/K) \cong A[n](L) \rtimes \mathfrak{g}_{L/K}$ (see \cite[Lemma 16.4.2]{Fried-Jarden}, which gives a slightly weaker statement; the stronger version needed here follows immediately by an inductive argument).  The linear disjointness of the extensions $M_i/L$ means that the Galois group of the compositum is the direct product of the Galois groups, and this implies that the 
set of classes $\eta_i'$ is LI over $\Z/n\Z$ in $H^1(L,A[n])^{\mathfrak{g}_{L/K}}$, completing the proof.  

\subsection{A Generalization}
\noindent
In the setup of Theorem \ref{BIGSHAFTHM}, the hypothesis that $n$ is not divisible by $\car K$ was used to ensure that $A[n]$ is an \'etale group scheme.  In fact we need only that $A[n]$ admits an \'etale subgroup scheme $H$ of exponent $n$; equivalently, $A(K^{\sep}$) contains a point of order $n$.  For instance this condition holds if $A_{/K}$ is ordinary and $K$ is perfect.  Running the argument with $H$ in place of $A[n]$ gives the following result. 

\begin{thm}
\label{IMPROVEDBIGSHAFTHM}
Let $A_{/K}$ be a nontrivial abelian variety over a Hilbertian field.  Let 
$n > 1$ be such that $A(K)/nA(K)$ is finite.  If $\car K \mid n$, suppose 
$A(K^{\sep})$ contains a point of order $n$.  Then in $H^1(K,A)$ there 
are infinitely many classes of period $n$.
\end{thm}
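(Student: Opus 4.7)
I would follow the blueprint of the preceding proof, with $A[n]$ systematically replaced by an étale subgroup scheme $H \subseteq A$ of exponent $n$. The hypothetical point $P \in A(K^{\sep})$ of order $n$ generates, under the $\gk$-action, a finite $\gk$-submodule of $A[n](K^{\sep})$ of exponent exactly $n$; anti-equivalence of categories then yields the desired finite étale $K$-subgroup scheme $H \subseteq A[n]$. Letting $B = A/H$ and $\phi \colon A \to B$ denote the projection, the sequence $0 \to H \to A \xrightarrow{\phi} B \to 0$ is étale, hence exact on $K^{\sep}$-points, and the resulting long exact Galois sequence produces a natural map $R \colon H^1(K,H) \to H^1(K,A)$ whose image lies in $H^1(K,A)[n]$ (since $H$ is $n$-torsion) and whose kernel is $B(K)/\phi(A(K))$.

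The proof then has two parts. First, reproduce Steps 1--3 of the proof of Theorem \ref{BIGSHAFTHM} verbatim with $H$ in place of $A[n]$: the inflation-restriction sequence attached to $L = K(H(K^{\sep}))/K$ identifies the surjective cocycles in $H^1(L, H)^{\gg_{L/K}}$ coming from $H^1(K,H)$ as parameterizing split Galois liftings of $L/K$ with kernel $H(L)$, and Ikeda's theorem together with Hilbert irreducibility over the Hilbertian field $K$ produces infinitely many linearly disjoint such liftings. This yields an infinite LI over $\Z/n\Z$ subset of $H^1(K, H)$. Second, apply Lemma \ref{LILEMMA}b) to $R$ to extract an infinite $1$-LI subset of $R(H^1(K, H)) \subseteq H^1(K, A)[n]$, producing infinitely many classes of period $n$.

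The main obstacle is showing $\Ker R = B(K)/\phi(A(K))$ is finite, as Lemma \ref{LILEMMA}b) requires, since the hypothesis only provides finiteness of $A(K)/nA(K)$ rather than of $B(K)/nB(K)$. I would handle this via the dual isogeny $\hat\phi \colon B \to A$ satisfying $\hat\phi \circ \phi = [n]_A$: the identity $\hat\phi(\phi(A(K))) = nA(K)$ produces a well-defined homomorphism $\psi \colon B(K)/\phi(A(K)) \to A(K)/nA(K)$, and a direct chase shows $\Ker \psi \cong (\phi(A(K)) + \hat H(K))/\phi(A(K))$ is a quotient of the finite group $\hat H(K) := (\Ker\hat\phi)(K)$. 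This yields $|\Ker R| \le |\hat H(K)| \cdot |A(K)/nA(K)| < \infty$, after which the rest of the argument is a straightforward adaptation of the earlier proof.
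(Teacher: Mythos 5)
Your proof is correct and implements precisely what the paper intends by ``running the argument with $H$ in place of $A[n]$,'' supplying the one non-obvious detail the paper glosses over, namely that $\Ker R = B(K)/\phi(A(K))$ is finite. Your complementary-isogeny computation is valid; an alternative and perhaps shorter route to the same finiteness is to factor $R$ as $H^1(K,H) \to H^1(K,A[n]) \to H^1(K,A)$ via the inclusions $H \hookrightarrow A[n] \hookrightarrow A$, observing that the first map has kernel a quotient of the finite group $(A[n]/H)(K)$ and the second (from the flat Kummer sequence) has kernel $A(K)/nA(K)$, finite by hypothesis.
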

\noindent

\begin{remark}
For every prime $p$ there is an ordinary elliptic curve $E$ over the perfect field $\F_p$.  Thus for every field $K$ of characteristic $p$ and all $g \geq 1$, there is a $g$-dimensional abelian variety $A$ such that $A(K^{\sep})$ contains points of order $n$ for all $n > 1$.  Thus 
Theorem \ref{IMPROVEDBIGSHAFTHM} implies: every Hilbertian WMW field admits 
abelian varieties whose WC-groups have infinitely many elements of every period $n > 1$.
\end{remark}

\section{The Proof of the Main Theorem: Outline}
\noindent
The proof of the Main Theorem is a three step argument.  We go by induction on the 
transcendence degree, and in fact two out of the three steps go into establishing the base cases.  Here is the first step.  

\begin{thm}
\label{PARTONE}
Let $K$ be a global field, and let $E_{/K}$ be an elliptic curve with 
$E(K) = \Sha(K,E) = 0$.  Then for all $n \geq 1$, there is an infinite 
LI over $\Z/n\Z$ subset $S \subset H^1(K,E)[n]$ such that every $\eta \in S$ has index $n$.  
\end{thm}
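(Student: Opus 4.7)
The plan is to combine the Kummer isomorphism with the construction from the proof of Theorem \ref{BIGSHAFTHM} to produce infinitely many LI period-$n$ classes, then to use $\Sha(K,E) = 0$ together with Lichtenbaum's local period-index theorem to force each such class to have index equal to $n$.

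First, since $E(K) = 0$, the $n$-Kummer sequence collapses to an isomorphism $H^1(K,E[n]) \xrightarrow{\sim} H^1(K,E)[n]$. Because $K$ is a global field, it is Hilbertian and MW, so when $(n,\car K) = 1$ the argument inside the proof of Theorem \ref{BIGSHAFTHM} (inflation-restriction over $L = K(E[n])$, Ikeda's theorem on split embedding problems, and Hilbert irreducibility producing an infinite linearly disjoint family of specializations) yields an infinite $\Z/n\Z$-LI subset of $H^1(K,E[n])$; transporting via Kummer gives an infinite LI subset $S \subset H^1(K,E)[n]$ of period-$n$ classes. When $\car K = p$ and $p \mid n$, primary decomposition (Lemma \ref{PDLEMMA}) reduces us to $n = p^a$; one then applies either the \'etale-subgroup variant of Theorem \ref{IMPROVEDBIGSHAFTHM} (when $E$ is ordinary, so that $E[p^a]$ has a nontrivial \'etale quotient) or a parallel flat-cohomology construction.

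It remains to show $I(\eta) = n$ for every $\eta \in S$. The bound $n = P(\eta) \mid I(\eta)$ is automatic; the crux is $I(\eta) \mid n$. The first input is Lichtenbaum's theorem: for every place $v$ of $K$ and every elliptic curve over $K_v$, period equals index on $H^1(K_v,E)$, so $I(\eta_v) = P(\eta_v) \mid n$ at every $v$. The second input is the hypothesis $\Sha(K,E) = 0$, which I would use to upgrade these local equalities to a global one via the period-index obstruction. Concretely, let $\omega(\eta) \in \Br(K)[n]$ be the obstruction to lifting the (unique, by $E(K) = 0$) class in $\Pic^n(C_\eta)(K)$ to an honest line bundle on $C_\eta/K$; by Lichtenbaum its localizations $\omega(\eta_v)$ all vanish, and the Hasse principle for $\Br(K)$ together with $\Sha(K,E) = 0$ would force $\omega(\eta) = 0$. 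A line bundle of degree $n$ on $C_\eta/K$, combined with Riemann-Roch for the genus-one curve $C_\eta$, produces an effective $K$-rational divisor of degree $n$, so $I(\eta) \mid n$.

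I expect the last step to be the main obstacle. Making precise the implication ``$\Sha(K,E) = 0 \Rightarrow \omega(\eta) = 0$'' requires aligning the global lift of $\eta$ with the locally representable classes supplied by Lichtenbaum; the natural route is Cassels-Tate / Poitou-Tate duality, identifying $\omega(\eta)$ with a pairing value controlled by $\Sha(K,E)$ (and using $E(K) = 0$ to pin down the lift uniquely). In the characteristic-$p$ case with $p \mid n$ all of this must be redone in flat cohomology, where the analogs of Poitou-Tate duality and Lichtenbaum's local theorem are technically more demanding but structurally parallel.
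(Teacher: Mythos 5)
Your proposal is genuinely different from the paper's proof, and, as you anticipate, it leaves real gaps in exactly the places where the hypotheses $E(K) = \Sha(K,E) = 0$ are supposed to do the work. The paper does not invoke the Hilbertian/Ikeda construction of Theorem \ref{BIGSHAFTHM} here at all. Instead it applies the Gonz\'alez-Avil\'es--Tan exact sequence (Corollary \ref{GATCOR}) to show that $E(K) = \Sha(K,E) = 0$ forces the local restriction maps to assemble into a direct-sum isomorphism $H^1(K,E) \cong \bigoplus_v H^1(K_v,E)$; it then produces period-$n$ classes in $H^1(K_v,E)$ for infinitely many places $v$ (Lemma \ref{KEYDUALITYLEMMA}, via Tate--Milne local duality and the $K_v$-analytic Lie group structure of $E(K_v)$ worked out in Theorem \ref{LOCALANALYTICTHM}), and pulls these back through the isomorphism. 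A class supported at a single place has period equal to index automatically (Lemma \ref{5.3}), and classes supported at distinct places are automatically LI because they lie in distinct direct summands. Both properties you have to fight for in your outline come for free in this arrangement.

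Two concrete gaps in your route. First, for $n = p^a$ in characteristic $p$ the \'etale construction fails: being ordinary gives a nontrivial \'etale \emph{quotient} of $E[p^a]$, but Theorem \ref{IMPROVEDBIGSHAFTHM} requires an \'etale \emph{subgroup} scheme of exponent $p^a$, equivalently a point of order $p^a$ in $E(K^{\sep})$. Over the imperfect field $\F_p(t)$ the connected--\'etale sequence need not split, and Remark \ref{PTORSIONREMARK} explicitly notes that the curve of Theorem \ref{PARTTWO} is ordinary yet has $E[p](\F_p(t)^{\sep}) = 0$; so your parenthetical ``$E[p^a]$ has a nontrivial \'etale quotient'' does not supply what \ref{IMPROVEDBIGSHAFTHM} needs, and the ``parallel flat-cohomology construction'' is precisely the missing content. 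Second, the index argument is not tight. Lichtenbaum's local theorem gives $I(C_{\eta_v}) = P(\eta_v) \mid n$, hence some degree-$n$ divisor class rational over $K_v$, but that class need not be the restriction of your (unique) global lift $\xi$: the two can differ by an element of $E(K_v)/nE(K_v)$, which is generally nonzero even though $E(K)/nE(K) = 0$, and the obstruction is sensitive to this choice of lift. Reconciling the local lifts with the global one is genuinely subtle; the paper sidesteps it entirely by arranging that each class is supported at a single place and appealing to Lemma \ref{5.3}, which works uniformly in all characteristics.
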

\noindent
Theorem \ref{PARTONE} should be compared to \cite[Thm. 3]{Crelle}: the proofs are identical in the number field case, but the logical setup is a bit different: the conclusion of the earlier result is that classes of arbitrary 
index exist in every finite extension $L/K$, but that part of the argument would not work fully in positive characteristic.  In the present proof, controlling the period under extensions $L/K$ is handled in a better way in the inductive step of the argument, which comes later (and which exploits the linear independence property of $S$).  We must give particular care to $p$-primary torsion in characteristic $p$: here this amounts to using Milne's extension of Poitou-Tate global duality.  The proof of Theorem \ref{PARTONE} is given in $\S$ 5.3.
\\ \\
The second step of the proof is to verify that for every \emph{prime global field} $K$ there is an elliptic curve satisfying the hypotheses of Theorem \ref{PARTONE}. In the case of $K = \Q$ we may take the same elliptic curve used in \cite{Crelle}: namely Cremona's $1813B1$ curve
\begin{equation}
\label{CREMONACURVE}
E: y^2+y = x^3-49x - 86. 
\end{equation}
That $E(\Q) = \Sha(\Q,E) = 0$ is a deep theorem of Kolyvagin \cite[Thm. H]{Kolyvagin89}.  
\\ \\
We also we need such a curve $E_{/\F_p(t)}$ for \emph{every prime} $p$.  We will show: 

\begin{thm}
\label{main:IFG_FPt} \label{PARTTWO}
For every prime number $p$, the elliptic curve 
\begin{equation}
\label{ELLIPTICCURVEEQ}
E \, : \, \quad y^2 + txy + t^3y = x^3 + t^2x^2 + t^4x + t^5 
\end{equation}
defined over $\F_p(t)$ has $E (\F_p(t)) = 0$ and $\Sha(\F_p(t),E) = 0$.
\end{thm}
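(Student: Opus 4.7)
The plan is to analyze the minimal regular model $\pi \colon \XX \to \PP^1_{\F_p}$ of the elliptic fibration with generic fiber $E$, identify $\XX$ as a rational elliptic surface, and combine the Shioda--Tate formula (for the rank of $E(\F_p(t))$) with the Artin--Tate formula (for $\Sha$).

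First, a direct computation of the discriminant gives $\deg \Delta = 12$, so $\chi(\OO_\XX) = 1$ and $\XX$ is a rational elliptic surface with $\rank \NS(\XX) = 10$. At $t = 0$ the valuations of $(a_1, a_2, a_3, a_4, a_6)$ are $(1,2,3,4,5)$, so the model is already minimal there, and Tate's algorithm applied to the pair $(v(c_4), v(\Delta)) = (4,10)$ pins the Kodaira type to $II^\ast$, contributing $m_0 - 1 = 8$. The change of coordinates $\tau = 1/t$, $x = t^2 X$, $y = t^3 Y$ converts $E$ into $Y^2 + XY + Y = X^3 + X^2 + X + \tau$, whose reduction mod $\tau$ has discriminant $-83$; so $t = \infty$ is a place of good reduction whenever $p \neq 83$. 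The remaining singular fibers lie over the roots of the quadratic factor of $\Delta/t^{10}$, whose discriminant works out to $-47^3$; generically these are two fibers of type $I_1$, each contributing $0$ to the Shioda--Tate sum. Thus $\sum_v (m_v - 1) = 8$ and Shioda--Tate gives $\rank E(\F_p(t)) = 10 - 2 - 8 = 0$.

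To eliminate torsion, I would reduce modulo a place $v$ of good reduction: $E(\F_p(t))_{\tors}$ then injects into the $\F_p$-points of a smooth fiber, and a small enough choice of $v$ bounds the order. For $\Sha(\F_p(t), E) = 0$ I would invoke the Artin--Tate formula for $\XX/\F_p$ in its rank-zero form, which, in the presence of $E(K)_{\tors} = 0$, reduces $|\Sha|$ to a quotient of $|\Br(\XX)|$ by the product of Tamagawa numbers (up to $|\NS(\XX)_{\tors}|$). Since $\XX$ is geometrically rational, $\Br(\overline{\XX}) = 0$, and Hochschild--Serre identifies $\Br(\XX) = H^1(\Gal(\overline{\F_p}/\F_p), \NS(\overline{\XX}))$, a finite group that vanishes once the Frobenius action on $\NS(\overline{\XX})$ is understood. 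Combined with $E(K)_{\tors} = 0$, this forces $|\Sha| = 1$.

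The main obstacle will be the characteristic-uniform bookkeeping. The expressions for $b_2$ and $b_4$ degenerate in characteristics $2$ and $3$, and the quadratic factor of $\Delta$ loses its rank at $p \in \{47, 83\}$, so a short list of exceptional primes must be treated by direct application of Tate's algorithm, verifying in each case that the fiber contributions still sum to $8$. A secondary challenge is checking $\Br(\XX) = 0$ uniformly in $p$ when the Galois action on $\NS(\overline{\XX})$ becomes nontrivial (e.g.\ when the quadratic factor is irreducible over $\F_p$); this reduces to a finite Galois-cohomology calculation on the lattice of fiber components, and the richness of the $II^\ast$ configuration plus at least one $\F_p$-rational multiplicative fiber is what makes the computation come out favourably.
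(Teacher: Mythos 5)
Your proposal reaches the rank-zero conclusion by essentially the same route as the paper (a Shioda--Tate count with the type-$II^\ast$ fiber at $t=0$ absorbing all eight components), and your local computations -- the valuations $(1,2,3,4,5)$ at $t=0$, the substitution $\tau = 1/t$, $x = t^2X$, $y = t^3Y$ giving good reduction at infinity away from $p = 83$, and the discriminant $-47^3$ of the quadratic factor -- all check out. But you diverge from the paper on both the torsion and the $\Sha$ steps, and the torsion step as written has a genuine gap.

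For torsion you propose to reduce at a place $v$ of \emph{good} reduction and appeal to ``a small enough choice of $v$.'' The reduction map at a good place is indeed injective on $E(K)_{\tors}$ (even on $p$-torsion in characteristic $p$, since the formal group $E_1(K_v)$ is torsion-free), but this only gives $\#E(K)_{\tors} \mid \#\tilde E(k_v)$, which is a \emph{bound}, not a vanishing statement: a single degree-one place forces the order to divide an integer in the Hasse window around $p+1$, and intersecting over several places is an explicit finite check that you have not done. The paper instead reduces at the place of \emph{bad} reduction $t=0$: because the fiber is $II^\ast$ the component group is trivial, so all of $E(K_v)$ maps to $\tilde E_{\ns}(k_v) \cong k_v^+$, a $p$-group; this kills all prime-to-$p$ torsion at once. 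The $p$-primary part is then handled by a separate $j$-invariant criterion (Lemma \ref{LemmaNoPTorsion}): $E(k)[p^\infty]\ne 0$ forces $j(E)\in k^p$, and here $j(E) = 47^3t^2/(83t^2-199t+432) \notin \F_p(t)^p$ for $p\ne 47$. Your sketch does not address $p$-power torsion at all, nor the isotrivial case $p=47$ where $j(E)=0$ (the paper falls back on a machine computation there). These are not mere bookkeeping; the good-reduction route would need a different mechanism to actually reach zero.

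For $\Sha$ you invoke Artin--Tate and aim to compute $\Br(\XX) \cong H^1(\Gal(\overline{\F_p}/\F_p), \NS(\overline{\XX}))$, which is a legitimate alternative to what the paper does. The paper instead uses the Kato--Trihan theorem (BSD for elliptic curves over $\F_q(t)$ of height $\le 2$, where $\Sha$ is known finite) and observes that $\deg\mathfrak{n}(E)=4$ in every characteristic, so the $L$-function has degree $\deg\mathfrak{n}-4 = 0$, hence $L(E,s)\equiv 1$ and $\#\Sha=1$ falls out of the BSD formula with all other terms equal to $1$. This sidesteps the Brauer-group calculation entirely. Your route would work -- the $II^\ast$ fiber has component lattice $E_8$ with trivial automorphism group, and the $I_1$ fibers are irreducible, so the Frobenius action on $\NS(\overline{\XX})$ is trivial and $H^1(\hat\Z,\Z^{10})=0$ -- but you leave that as a gesture, and you also omit the input (finiteness of $\Sha$ for height $\le 2$) needed before either Artin--Tate or BSD can be applied unconditionally. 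In short: Shioda--Tate step is sound and matches the paper; the $\Sha$ step is a genuinely different but workable route; the torsion step needs to be replaced, not just filled in.
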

\noindent
The proof of Theorem \ref{PARTTWO} takes advantage of some deep work on the Birch-Swinnerton Dyer conjecture in the function field case.  It is given in $\S$ \ref{Proof:TrivialWM}.
\\ \\
Here is the inductive step.
\begin{thm}
\label{PARTTHREE}
Let $n > 1$, $K$ a WMW field, $A_{/K}$ an abelian variety, and $L/K$ be a finitely generated separable field extension.  Let $S \subset H^1(K,A)[n]$ be an infinite $2$-LI over $\Z/n\Z$ subset.  Then there is an infinite subset $S' \subset S$ such that $\Res_L S' \subset H^1(L,A)[n]$ is infinite and consists of elements of order $n$.  Moreover, if each element of $S$ has index $n$, then each element of $\Res_L S'$ has index $n$.
\end{thm}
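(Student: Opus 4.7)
The plan is to apply Lemma \ref{LILEMMA}b) to the restriction $R = \Res_L\colon H^1(K,A)[n] \to H^1(L,A)[n]$ and the given $2$-LI subset $S$; the only substantive task is to verify that $\Ker R = \widetilde{H}^1(L/K,A)[n]$ is finite. Once this is done, Lemma \ref{LILEMMA}b) supplies an infinite $S' \subseteq S$ with $\Res_L(S')$ infinite and $1$-LI over $\Z/n\Z$, so every element of $\Res_L(S')$ has order exactly $n$. The ``moreover'' clause follows immediately: for $\eta \in S'$ with representative torsor $C$, base change of any closed point of $C$ of degree $d$ yields a $0$-cycle on $C_L$ of total degree $d$, so $I(\Res_L \eta) \mid I(\eta) = n$; together with $n = P(\Res_L \eta) \mid I(\Res_L \eta)$, this forces $I(\Res_L \eta) = n$.

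To bound $\widetilde{H}^1(L/K,A)[n]$ I use the separability hypothesis to pick a separating transcendence basis $t_1,\ldots,t_d$ and factor $K \subseteq M := K(t_1,\ldots,t_d) \subseteq L$ with $M/K$ purely transcendental and $L/M$ finite separable. Since $\Res_L = \Res_{L/M} \circ \Res_M$, it suffices to show that $\Res_M$ is injective on $H^1(K,A)$ and that $\widetilde{H}^1(L/M,A)[n]$ is finite.

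The transcendental step is handled by induction on $d$. In the base case $d=1$, any $\alpha \in \widetilde{H}^1(K(t)/K,A)$ is represented by a torsor $C_{/K}$ with $C(K(t)) \neq \varnothing$; the resulting $K(t)$-point is a rational map $\mathbb{P}^1_K \dashrightarrow C$, which extends to a morphism $\mathbb{P}^1_K \to C$ because $\mathbb{P}^1_K$ is a smooth proper curve and $C$ is projective (being a torsor under the projective variety $A$). Evaluating at $0 \in \mathbb{P}^1_K(K)$ gives a $K$-point of $C$, so $\alpha = 0$; the inductive step is immediate by splitting $K(t_1,\ldots,t_d)/K$ through $K(t_1)$.

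The finite separable step is where WMW is used, and is the main obstacle in the argument. Replacing $L$ by its Galois closure $\widetilde{L}/M$ only enlarges the kernel, so it suffices to prove $H^1(G,A(\widetilde{L}))[n]$ is finite for $G = \Gal(\widetilde{L}/M)$, the identification with $\widetilde{H}^1(\widetilde{L}/M,A)$ coming from inflation-restriction. Setting $N := A(\widetilde{L})$ and factoring multiplication by $n$ as $N \xrightarrow{\pi} nN \xrightarrow{\iota} N$, one checks $H^1(G,N)[n] = \pi_*^{-1}(\Ker \iota_*)$. The cohomology sequence of $0 \to N[n] \to N \xrightarrow{\pi} nN \to 0$ exhibits $\Ker \pi_*$ as a quotient of $H^1(G,N[n])$, which is finite because $G$ is finite and $N[n] \subseteq A[n](\overline{K})$ is finite; the cohomology sequence of $0 \to nN \to N \to N/nN \to 0$ exhibits $\Ker \iota_*$ as a quotient of $(N/nN)^G$, which is finite because Proposition \ref{LANGNERONPROP} ensures $\widetilde{L}$ is WMW, so that $A(\widetilde{L})/nA(\widetilde{L})$ is finite. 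With both $\Ker \pi_*$ and $\Ker \iota_*$ finite, the preimage $\pi_*^{-1}(\Ker \iota_*)$ is finite, completing the argument.
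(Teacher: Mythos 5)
Your proof is correct, and the overall scaffolding matches the paper: a separating transcendence basis factors $L/K$ through $M = K(t_1,\ldots,t_d)$, the purely transcendental step is handled by extending a rational map $\mathbb{P}^1 \dashrightarrow C$ to a morphism and evaluating at a $K$-point, Lemma \ref{LILEMMA}b) does the bookkeeping, and the index claim follows from $I(\Res_L\eta) \mid I(\eta)$ together with $P \mid I$.

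Where you genuinely diverge is in the finite Galois step, i.e.\ the analogue of Lemma \ref{THREELEMMA2}. The paper identifies $\widetilde{H}^1(L/K,A)$ inside a snake-lemma diagram built from the \emph{flat} Kummer sequence $0 \to A(K)/nA(K) \to H^1_{\mathrm{fl}}(K,A[n]) \to H^1(K,A)[n] \to 0$, and then needs Waterhouse's theorem to re-express $\widetilde{H}^1(L/K,A[n])$ as Hochschild cohomology $H^1(\Aut(L/K),A[n](L))$ of the finite group; this is delicate precisely because $A[n]$ is not \'etale when $\mathrm{char}\,K$ divides $n$. You bypass flat cohomology entirely: after using inflation--restriction to identify $\widetilde{H}^1(\widetilde{L}/M,A)$ with $H^1(G,A(\widetilde{L}))$ for the finite group $G = \Gal(\widetilde{L}/M)$, you factor $[n]\colon N \to N$ (with $N = A(\widetilde{L})$) through $nN$ and use the two short exact sequences of abstract $G$-modules $0 \to N[n] \to N \to nN \to 0$ and $0 \to nN \to N \to N/nN \to 0$ to bound $H^1(G,N)[n]$ by $|\Ker\pi_*|\cdot|\Ker\iota_*|$. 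Finiteness then comes from $N[n] \subseteq A(\overline{K})[n]$ being finite and $N/nN$ being finite by WMW plus Proposition \ref{LANGNERONPROP}. This is more elementary: no group schemes, no flat site, no appeal to Waterhouse, and it isolates cleanly the only place WMW enters. The small asymmetry (you only control the $n$-torsion of the kernel, whereas the paper shows the whole kernel is finite) is harmless, since Lemma \ref{LILEMMA}b) applied to $R\colon H^1(K,A)[n] \to H^1(L,A)[n]$ only sees $\widetilde{H}^1(L/K,A)[n]$ anyway.
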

\noindent
We will prove Theorem \ref{PARTTHREE} in $\S$ \ref{Proof:InductiveStep}.
\\ \\
We now explain how to put Theorems \ref{PARTONE}, \ref{PARTTWO} and \ref{PARTTHREE} together to prove the Main Theorem.  Let $L$ be an 
infinite, finitely generated field.  Let $k_0$ be its prime subfield: either $\Q$ or $\F_p$.  Since $k_0$ is perfect, $L/k_0$ is separable.  
\\ \\
Case 1: Suppose $k_0 = \Q$.  Then we take the elliptic curve $E_{/\Q}$ 
of (\ref{CREMONACURVE}), with $E(\Q) = \Sha(\Q,E) = 0$.  By Theorem \ref{PARTONE}, for each $n > 1$ there is an infinite LI over $\Z/n\Z$ subset $S \subset H^1(K,E)$ such that every element of $S$ has index $n$.  We apply Theorem \ref{PARTTHREE} with $K = \Q$ and $A = E$ to get the desired result.
\\ \\
Case 2: Suppose $k_0 = \F_p$.  Since $k_0$ is perfect, the finitely generated extension $L/k_0$ admits a separating transcendence basis $t,t_2,\ldots,t_d$.  We take the elliptic curve $E_{/\F_p(t)}$ of Theorem \ref{PARTTWO}.  The rest of the argument proceeds as in Case 1.  

\section{The Proof of Theorem \ref{PARTONE}}

\subsection{Mordell-Weil Groups of Abelian Varieties Over Local Fields}
 \noindent
By a \textbf{local field} we mean a field $K$ which is complete and nondiscrete with respect to an ultrametric norm $|\cdot|$, and with finite residue field $k$. A local field of characteristic $0$ is (canonically) a finite extension of $\Q_p$, and a local field of positive characteristic is (noncanonically) isomorphic to $\F_q((t))$.  
\\ \\
If $K$ is a local field and $G_{/K}$ is an algebraic group, then the set $G$ of $K$-rational points of $G$ has the structure of a \textbf{K-analytic Lie group} in the sense of \cite[LG, Ch. IV]{LALG}.  Since $G$ is quasi-projective, $G$ is homeomorphic to a subspace of $\PP^N(K)$ for some $K$ and is thus second countable.  When $G = A$ is an abelian variety, $G$ is commutative and compact.  In this case, we can use $K$-adic Lie theory to analyze the structure of the Mordell-Weil group and -- crucially for us in what follows -- the weak Mordell-Weil groups $A(K)/nA(K)$.  
\\ \\
This was done somewhat breezily in \cite{Crelle}: we asserted (\ref{LOCALANEQ1}) below, and our justification was ``by $p$-adic Lie theory''.  Here we need also the positive characteristic case, which although certainly known to some experts, to the best of our knowledge does not appear in the literature.  This time around we give a careful treatment of both the $p$-adic and Laurent series field cases here, with an eye towards providing a suitable reference for future work.  


\begin{lemma}
\label{GROUPTOPLEMMA}
Let $H$ be a commutative, torsionfree pro-$p$-group, endowed with its profinite topology.  Then, as a topological group, $H \cong \prod_{i \in I} \Z_p$ for some index set $I$.  If $H$ is second countable, then $I$ is countable.
\end{lemma}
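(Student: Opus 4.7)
The plan is to reduce to the structure theorem for divisible abelian groups via Pontryagin duality.

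First, I would observe that $H$, being a profinite group, is compact Hausdorff and abelian, so Pontryagin duality applies. Set $H^\vee = \Hom_{\mathrm{cts}}(H,\R/\Z)$; this is a discrete abelian group (because $H$ is compact) and $H \cong (H^\vee)^\vee$ canonically.

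Next I would translate the two remaining hypotheses on $H$ into algebraic properties of $H^\vee$. Since $H$ is pro-$p$, every continuous homomorphism $H \to \R/\Z$ must land in the unique $p$-primary torsion subgroup $\Q_p/\Z_p \subset \R/\Z$, so $H^\vee$ is a $p$-primary torsion abelian group. Since $H$ is torsionfree, multiplication by $n$ is injective on $H$ for every $n \geq 1$; by the contravariant exactness of Pontryagin duality on locally compact abelian groups this means multiplication by $n$ is surjective on $H^\vee$, so $H^\vee$ is divisible.

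Now I would invoke the standard structure theorem for divisible abelian groups: a divisible $p$-primary torsion abelian group is a direct sum of copies of the Pr\"ufer group $\Q_p/\Z_p$. Thus $H^\vee \cong \bigoplus_{i \in I} \Q_p/\Z_p$ for some index set $I$. Dualizing back, and using that Pontryagin duality sends direct sums of discrete abelian groups to direct products of compact abelian groups, together with $(\Q_p/\Z_p)^\vee \cong \Z_p$, yields
\[ H \;\cong\; (H^\vee)^\vee \;\cong\; \Bigl(\bigoplus_{i \in I} \Q_p/\Z_p\Bigr)^{\!\vee} \;\cong\; \prod_{i \in I} \Z_p \]
as topological groups. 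For the second-countability claim: a compact Hausdorff abelian group is second countable if and only if its Pontryagin dual is countable as a discrete group, and $\bigoplus_{i \in I} \Q_p/\Z_p$ is countable if and only if $I$ is countable (since $\Q_p/\Z_p$ is itself countable).

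The only step requiring care is the dictionary in the second paragraph (torsionfree $\leftrightarrow$ divisible, pro-$p$ $\leftrightarrow$ $p$-primary), but these are standard consequences of the exactness of Pontryagin duality and the identification of the $p$-primary part of $\R/\Z$ with $\Q_p/\Z_p$; no serious obstacle is expected.
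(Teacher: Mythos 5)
Your proof is correct and follows essentially the same route as the paper's: pass to the Pontryagin dual $H^\vee$, observe that pro-$p$ plus torsionfree translate to $p$-primary torsion plus divisible, invoke the structure theorem for divisible abelian groups to get $H^\vee \cong \bigoplus_{i\in I}\Q_p/\Z_p$, and dualize back to $\prod_{i\in I}\Z_p$. The only cosmetic difference is in the final step: the paper derives countability of $I$ from the topological fact that an uncountable product of nontrivial second countable Hausdorff spaces fails to be second countable, while you appeal to the duality between second countability of a compact abelian group and countability of its discrete dual; both are standard and equivalent for the purpose at hand.
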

\begin{proof}
The Pontrjagin dual $H^{\vee}$ of $H$ is a commutative $p$-primary torsison group.  Since $H[p] = 0$, $H^{\vee}/H^{\vee}[p] = 0$, so $H$ is divisible.  It is a classical result that a divisible commutative group is a direct sum of copies of $\Q$ and $\Q_p/\Z_p$ for various primes $p$ \cite[5.2.12]{Scott}, and the number of summands of each isomorphism type is invariant of the chosen decomposition.  (Or: an injective module over a commutative Noetherian ring $R$ is a direct sum of copies of injective envelopes of modules of the form $R/\pp$ as $\pp$ ranges over prime ideals of $R$. \cite[Thms. 18.4 and 18.5]{Matsumura}: applying this with $R = \Z_{(p)}$ recovers this classical result.)  Thus for some index set $I$, \[H^{\vee} \cong \bigoplus_{i \in I} \Q_p/\Z_p, \]
and taking Pontrjagin duals gives 
\[ H \cong \prod_{i \in I} \Z_p. \]
An uncountable product of second countable Hausdorff spaces, each with more than a single point, is not second countable \cite[Thm. 16.2c)]{Willard}, so if $H$ is second countable, $I$ is countable.
\end{proof}

\begin{thm}
\label{LOCALANALYTICTHM}
Let $K$ be a local field, with valuation ring $R$, maximal ideal $\mm$ and residue field $\F_q = \F_{p^a}$.  If $\car K = 0$, let $d = [K:\Q_p]$.  Let $G$ be a compact commutative second countable $K$-analytic Lie group, of dimension $g \geq 1$.  \\
a) If $\car K = 0$, then $G[\tors]$ is finite and we have a topological group isomorphism
\begin{equation}
\label{LOCALANEQ1}
G \cong \Z_p^{dg} \oplus G[\tors]. 
\end{equation}
b) If $\car K = p$, then $G[\tors]$ is finite if and only if $G[p]$ is finite.  When these 
equivalent conditions hold -- e.g. when $G = A(K)$ for a semi-abelian variety 
$A_{/K}$ -- then we have a group isomorphism
\begin{equation}
\label{LOCALANEQ2}
G \cong \left( \prod_{i=1}^{\infty} \Z_p \right) \oplus G[\tors]. 
\end{equation}
\end{thm}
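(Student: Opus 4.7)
The plan is to treat the two cases separately. Part (a) is a standard application of $p$-adic Lie theory; Part (b) requires more care because in characteristic $p$ the Lie algebra exponential is unavailable and the relevant open subgroups are no longer finitely generated $\Z_p$-modules.

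\smallskip
\emph{Part (a).} Since $\car K = 0$, the exponential map of the $p$-adic Lie group $G$ provides an open subgroup $U \leq G$ together with a topological group isomorphism $U \cong \OO_K^g$ (with its additive structure). Because $\OO_K$ is a free $\Z_p$-module of rank $d$, this yields $U \cong \Z_p^{dg}$. Since $U$ is torsion-free and $G/U$ is finite by compactness, $G[\tors]$ injects into $G/U$ and is therefore finite. Decomposing $G = G_p \oplus G_{p'}$ into pro-$p$ and prime-to-$p$ parts, the pro-$p$-ness of $U$ forces $G_{p'}$ to embed in the finite group $G/U$, so $G_{p'} \subseteq G[\tors]$. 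The pro-$p$ group $G_p$ now contains $\Z_p^{dg}$ as an open subgroup of finite index, so it is topologically -- and by compactness, algebraically -- a finitely generated $\Z_p$-module; the structure theorem for $\Z_p$-modules together with a comparison of ranks forces $G_p \cong \Z_p^{dg} \oplus (G_p)_{\tors}$. Putting everything together gives $G \cong \Z_p^{dg} \oplus G[\tors]$.

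\smallskip
\emph{Part (b).} Analytic Lie theory over $K$ still produces an open subgroup $U \leq G$ isomorphic as a topological group to the points $\hat G(\mm)$ of some $g$-dimensional commutative formal group $\hat G$ over $R$; this $U$ is pro-$p$, so the prime-to-$p$ part $G_{p'}$ is again finite and I reduce to the pro-$p$ group $G_p$. For the equivalence $G[\tors]$ finite $\iff$ $G[p]$ finite, the forward direction is trivial; for the converse I would use that the isogeny $[p] \colon \hat G \to \hat G$ has finite degree $p^g$, so $\hat G[p^n](R)$ has bounded order and the ascending chain $U[p] \subseteq U[p^2] \subseteq \cdots$ eventually stabilizes. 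In the semi-abelian case the hypothesis is automatic, since $A[p]$ is a finite $K$-group scheme. Once $G[\tors]$ is finite, $G_p/(G_p)_{\tors}$ is a torsion-free, compact, second-countable, commutative pro-$p$ group, so by Lemma \ref{GROUPTOPLEMMA} it is isomorphic to $\prod_{i \in I} \Z_p$ for some countable $I$; since $U \cong \hat G(\mm)$ is uncountable (as $\mm$ is) and $(G_p)_{\tors}$ is finite, $I$ must be countably infinite. To split the extension
\[ 0 \to (G_p)_{\tors} \to G_p \to \prod_{i=1}^{\infty} \Z_p \to 0 \]
I apply Pontryagin duality: the dualized sequence has kernel $\bigoplus_{i=1}^{\infty} \Q_p/\Z_p$, which is divisible hence injective in the category of abelian groups, so the dual sequence splits; dualizing back and combining with $G_{p'} \subseteq G[\tors]$ yields $G \cong \prod_{i=1}^{\infty} \Z_p \oplus G[\tors]$.

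\smallskip
The main obstacle is the converse direction of the equivalence in Part (b): passing from a bound on $G[p]$ to a uniform bound on $G[p^n]$ for all $n$ requires formal-group input specific to positive characteristic, since the easy finite-generation arguments available in Part (a) break down when $G_p$ fails to be a finitely generated $\Z_p$-module.
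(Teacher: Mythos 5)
Your Part (a) is correct and takes essentially the same route as the paper (exponential to get an open $\Z_p^{dg}$, split off the prime-to-$p$ part, then the structure theorem for finitely generated $\Z_p$-modules), so I will focus on Part (b), where there are two genuine gaps, only one of which you have identified.

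\textbf{Gap 1: the equivalence $G[p]$ finite $\iff$ $G[\tors]$ finite.} You flag the converse as the main obstacle and propose to argue that ``the isogeny $[p]\colon\hat G\to\hat G$ has finite degree $p^g$, so $\hat G[p^n](R)$ has bounded order.'' Both halves of this fail. The degree of $[p]$ on a $g$-dimensional commutative formal group in characteristic $p$ is $p^h$ where $h$ is the \emph{height}, which is $\geq g$ and may be $\infty$ (e.g.\ $\hat{\Ga}$, the example the paper explicitly raises to show the equivalence genuinely needs a hypothesis); it is not determined by the dimension. More importantly, even when $[p]$ is an isogeny of finite degree $p^h$, this bounds $\#\,\hat G[p^n]$ as a group scheme by $p^{nh}$, which grows with $n$ and gives no uniform bound on $\hat G[p^n](R)$. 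The paper's argument instead works directly with the Serre filtration $G^0\supset G^1\supset\cdots$: from $\bigcap_i G^i=\{0\}$ and $G[p]$ finite one gets $G^n[p]=0$ for large $n$, and since $G^n\subset G^1$ with $G^1[\tors]=G^1[p^\infty]$, a pro-$p$ group with no $p$-torsion is torsion-free, so $G[\tors]$ injects into the finite group $G/G^n$. (A purely group-theoretic substitute is also available: $G$ is profinite, hence has no nontrivial divisible subgroups, and a $p$-primary group with finite socle is $F\oplus(\Q_p/\Z_p)^s$; profiniteness forces $s=0$. But either way your degree argument does not do the job.)

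\textbf{Gap 2: the index set $I$ is infinite.} After invoking Lemma \ref{GROUPTOPLEMMA} to get $H\cong\prod_{i\in I}\Z_p$, you conclude $I$ is countably infinite because $U\cong\hat G(\mm)$ is uncountable. This does not follow: $\Z_p^r$ is already uncountable for every $r\geq 1$, so uncountability of $U$ rules out nothing. What must actually be shown is that $H$ is not a finitely generated $\Z_p$-module, i.e.\ that $H/pH$ is infinite. This is precisely where characteristic $p$ enters in an essential way: since $\car K=p$, the formal group law satisfies $[p]\in R[[X_1^p,\dots,X_g^p]]^g$, so $pG_1\subset(t\F_q[[t^p]])^g$ is compact (hence closed) but \emph{not open} in $G_1=(t\F_q[[t]])^g$, giving $G_1/pG_1$ infinite and therefore $H/pH$ infinite. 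Without an input of this kind the proof has no way to distinguish the conclusion $\prod_{i=1}^\infty\Z_p$ from $\Z_p^{dg}$, and indeed by your own Part (a) the latter is exactly what happens when $\car K=0$.

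The remaining pieces of your Part (b) are fine: the dual-sequence splitting via injectivity of $\bigoplus\Q_p/\Z_p$ is a valid alternative to the paper's use of Baer's theorem, and your observation that the semi-abelian case has $A[p]$ a finite group scheme correctly handles that hypothesis.
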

\begin{proof}
Step 0: By \cite[LG IV.9]{LALG}, $G$ has a filtration by open subgroups 
\[ G = G^{-1} \supset G^0 \supset G^1 \supset \ldots \supset G^n \supset \ldots\]
such that: \\
(i) Each $G^i$ is obtained by evaluating a $g$-dimensional formal group law on $(\mm^i)^g$ ; \\
(ii) $\bigcap_{i \geq 1} G^i = \{0\}$;  \\
(iii) For all $i \geq 1$, $G^i/G^{i+1} \cong (k,+)$ is finite of exponent $p$; and \\
(iv) $G^1[\tors] = G^1[p^{\infty}]$ \cite[LG 4.25, Thm. 3]{LALG}.
\\ \\
Step 1: We show that $G[\tors]$ is finite if and only if $G[p]$ is finite.  (They need \emph{not} hold in characteristic $p$; take $G = \G_a = (R,+)$.)  Clearly $G[\tors]$ finite implies $G[p]$ finite.  Conversely, if $G[p]$ 
is finite, then it follows from (iv) and (ii) that $G^n$ is torsionfree for all sufficiently large $n$, and then (iii) implies that $G[\tors]$ is finite.  \\
Step 2: From now on we assume $G[\tors]$ is finite.  Thus $G[\tors]$ has finite exponent, and then \cite[Thm. 8;5]{Baer36} implies that there is a torsionfree subgroup $H$ of $G$ such that
\[ G = H \oplus G[\tors]. \]  
Thus (at least as an abstract group), $H$ is isomorphic to the torsionfree profinite commutative group $G/G[\tors]$.  Since $G$ has a finite index pro-$p$-subgroup (namely $G^n$ for sufficiently large $n$, it follows that $H$ is pro-$p$, so by Lemma \ref{GROUPTOPLEMMA}, \[H \cong G/G[\tors] \cong \prod_{i \in I} \Z_p \] for some index set $I$, and since $G/G[\tors]$ is second countable, $I$ is countable.  Thus 
\[ G \cong \left( \bigoplus_{i \in I} \Z_p \right) \oplus G[\tors]. \]
Step 3: Suppose $\car K  = 0$.  Then $K$-adic Lie groups $G$ and $H$ have isomorphic open subgroups if and only if their Lie algebras are isomorphic \cite[LG 5.34 Cor. 1]{LALG}.  So every $g$-dimensional compact, commutative $K$-adic Lie has an open subgroup isomorphic to $R^g \cong \Z_p^{dg}$.  This applies to our $G$ and gives in particular that $G$ is topologically finitely generated, so all finite index subgroups of $G$ are open.   Thus $H \cong \prod_{i \in I} \Z_p$ and $\Z_p^{dg}$ are both open subgroups of $G$; it follows easily that $H \cong \Z_p^{dg}$.  This completes the proof of part a).  \\
Step 4:  If $\car K = p > 0$, then in the formal group $G_1$, we have $[p] \in R[[X_1^p,\ldots,X_g^p]]^g$ \cite[LG 4.21 Cor. and LG 4.29 Exc. 7]{LALG}.  (A different proof using invariant differentials is given in the $g = 1$ case -- which is the case of our Main Theorem -- in \cite[Cor. 4.4]{AECI}.  It is straightforward to adapt this argument to the general case using the corresponding properties of invariant differentials on abelian varieties.)  Consider $pG_1$ as a subset of the profinite space $G_1 = (t\F_q[[t]])^g$.  It is compact, hence closed.  Moreover, it lies in $(t\F_q[[t^p]])^g$, so it is not open.  We deduce that $G_1/pG_1$ is infinite.  Since 
$G_1$ and $H$ are finite index subgroups of $G$, it follows that $H/pH$ is infinite, and thus $I$ is infinite.  Since $I$ is countable, 
\[ G \cong \prod_{i=1}^{\infty} \Z_p \oplus G[\tors]. \]
\end{proof}
\noindent
We immediately deduce the following result.

\begin{cor}
\label{LOCALANALYTICCOR}
We retain the notation of Theorem \ref{LOCALANALYTICTHM}.  \\
a) If $A(K)$ contains a point of order $n$, then so does $A(K)/nA(K)$.  \\
b) For all $a \geq 1$, $A(K)/p^a A(K)$ contains a point of order $p^a$.  \\
c) If $\car K > 0$, then for all $a \geq 1$, $A(K)/p^a A(K)$ contains 
an infinite subsset which is LI over $\Z/p^a\Z$.   
\end{cor}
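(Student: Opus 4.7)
The strategy is to read off all three parts from the structural decomposition provided by Theorem~\ref{LOCALANALYTICTHM}. Applied to $G = A(K)$ (for which $G[\tors]$ is finite, since $A$ is an abelian variety), the theorem gives $G \cong H \oplus G[\tors]$, where $H$ is a torsionfree pro-$p$ group: $H \cong \Z_p^{dg}$ if $\car K = 0$, and $H \cong \prod_{i=1}^{\infty}\Z_p$ if $\car K = p$. Quotienting respects the direct sum decomposition, so
\[
G/nG \cong H/nH \oplus G[\tors]/nG[\tors],
\]
and the corollary reduces to a separate analysis of each factor.

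For part (a), a point $P \in A(K)$ of order $n$ writes uniquely as $P = x+y$ with $x \in H$ and $y \in G[\tors]$; since $H$ is torsionfree, $nP = 0$ forces $nx = 0$ and hence $x = 0$, so $y$ itself has order $n$. By Lemma~\ref{PDLEMMA} I may reduce to $n = p^a$, and then writing the $p$-part of the finite group $G[\tors]$ in invariant-factor form $\bigoplus_j \Z/p^{b_j}\Z$, the hypothesis gives some $b_j \geq a$. A direct calculation then yields $G[\tors]_p / p^a G[\tors]_p \cong \bigoplus_j \Z/p^{\min(a,b_j)}\Z$, whose exponent is exactly $p^a$, producing the required point in $A(K)/nA(K)$.

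Part (b) is immediate from the fact that $H$ always contains $\Z_p$ as a direct summand (in either characteristic), so $H/p^a H \subset G/p^a G$ contains $\Z/p^a\Z$ as a direct summand and hence has a point of order $p^a$. Part (c) is the characteristic-$p$ payoff: because quotienting by $p^a$ commutes with products componentwise,
\[
H/p^a H \cong \prod_{i=1}^{\infty}\Z/p^a\Z,
\]
and the standard basis vectors $\{e_i\}$ (with a $1$ in slot $i$ and zeros elsewhere) form an infinite subset LI over $\Z/p^a\Z$, since any finite relation $\sum c_k e_{i_k} = 0$ vanishes coordinatewise. Injecting $H/p^a H$ into $G/p^a G$ delivers the desired infinite LI subset.

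I do not expect any serious obstacle: the deep content has been absorbed into Theorem~\ref{LOCALANALYTICTHM}, and the corollary amounts to unwinding that structural statement by elementary abelian-group arguments. The only mildly subtle point is the invariant-factor tracking in (a), where one has to check that passing to the quotient by $nG$ does not collapse the order of the torsion piece — which it cannot, precisely because the hypothesis places the point of order $n$ in the largest invariant factor.
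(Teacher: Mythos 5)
Your proposal is correct and matches what the paper intends: the paper gives no explicit proof, saying only that the corollary follows "immediately" from Theorem \ref{LOCALANALYTICTHM}, and the steps you supply — splitting $G \cong H \oplus G[\tors]$, observing that quotienting by $n$ respects the direct sum, and reading off the answer from the $H$-factor (a $\Z_p^{dg}$ or $\prod_{i=1}^{\infty}\Z_p$) and the finite torsion factor — are exactly the "unwinding" the authors leave to the reader. The one observation worth adding is that your deduction that a point of order $n$ lies in $G[\tors]$ is automatic (a point of finite order is by definition torsion), so the decomposition argument $nP=0 \Rightarrow x=0$ is a slightly roundabout route to a tautology; everything else, including the computation $T/p^aT \cong \bigoplus_j \Z/p^{\min(a,b_j)}\Z$ and the use of the coordinate vectors in $\prod_{i=1}^{\infty}\Z/p^a\Z$ as an LI family, is exactly right.
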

\noindent
The following key technical result records a global consequence of this local analysis.  

\begin{lemma}
\label{KEYDUALITYLEMMA}
Let $K$ be a global field, and let $A_{/K}$ be an abelian variety of dimension $g \geq 1$.  Let $n > 1$ be an integer.  \\
a) If $\car K \nmid n$, then there is a positive density set $\mathcal{P}$ of finite places of $K$ such that for all $v \in \mathcal{P}$, $H^1(K_v,A)$ has an element of order $n$.\\
b) If $\car K = p$ is a prime and $n = p^a$ for $a \geq 1$, then for every place $v$ of $K$, $H^1(K_v,A)$ has infinitely many elements of order $n$.
\end{lemma}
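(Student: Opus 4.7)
The plan is to combine Corollary \ref{LOCALANALYTICCOR}, applied to the dual abelian variety $A^\vee$, with local duality for abelian varieties. The essential input is the perfect pairing
\[ A^\vee(K_v)/n A^\vee(K_v) \;\times\; H^1(K_v,A)[n] \;\longrightarrow\; \Z/n\Z, \]
valid for every finite place $v$ of $K$ and every integer $n$: when $\gcd(n,\car k(v)) = 1$ this is classical local Tate duality, while when $\car K_v = p$ and $n = p^a$ it is Milne's extension via flat cohomology, which for the smooth commutative group $A$ coincides with Galois $H^1$. In each case, an element $\bar x$ of exact order $n$ on the left produces, via the surjective functional $\langle \bar x,-\rangle$, a class of exact order $n$ on the right.

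For part a), set $L = K(A^\vee[n])$. Since $\car K \nmid n$, $A^\vee[n]$ is \'etale and $L/K$ is a finite Galois extension. By the Chebotarev density theorem the set of finite places of $K$ that split completely in $L$ has density $1/[L:K] > 0$; discarding the finitely many places of bad reduction for $A^\vee$ and the finitely many dividing $n$ leaves a positive-density set $\mathcal{P}$. For $v \in \mathcal{P}$, the field $L$ embeds in $K_v$, so $A^\vee[n] \subseteq A^\vee(K_v)$, and in particular $A^\vee(K_v)$ contains a point of order $n$. Corollary \ref{LOCALANALYTICCOR}a) then provides a point of order $n$ in $A^\vee(K_v)/nA^\vee(K_v)$, and the pairing yields the desired class in $H^1(K_v,A)[n]$.

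For part b), $\car K = p$, $n = p^a$, and every place $v$ of $K$ is non-archimedean of residue characteristic $p$. Corollary \ref{LOCALANALYTICCOR}c) applied to $A^\vee_{/K_v}$ exhibits an infinite subset of $A^\vee(K_v)/p^a A^\vee(K_v)$ which is LI over $\Z/p^a\Z$; equivalently, in view of Theorem \ref{LOCALANALYTICTHM}b), the compact abelian group $A^\vee(K_v)/p^a A^\vee(K_v)$ contains $\prod_{i=1}^{\infty}\Z/p^a\Z$ (with product topology) as a closed subgroup up to a finite direct summand. Under Milne's local duality this compact group is Pontrjagin-dual to the discrete group $H^1(K_v,A)[p^a]$, and a direct computation of continuous characters gives $H^1(K_v,A)[p^a] \cong \bigoplus_{i=1}^{\infty} \Z/p^a\Z \oplus (\text{finite})$, which contains infinitely many elements of exact order $p^a$.

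The principal technical step is the appeal in part b) to Milne's $p$-primary local duality in equal characteristic $p$, together with the identification of the flat and Galois $H^1$ of the smooth group $A$ and the careful translation of the LI hypothesis through the Pontrjagin pairing. Granting these standard but delicate inputs, part a) reduces to a routine Chebotarev count plus Corollary \ref{LOCALANALYTICCOR}a), and part b) to a Pontrjagin-duality reading of Theorem \ref{LOCALANALYTICTHM}b).
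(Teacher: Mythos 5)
Your proof follows essentially the same route as the paper's: Tate--Milne local duality identifies $H^1(K_v,A)[n]$ with the Pontrjagin dual of the relevant weak Mordell--Weil group, a Chebotarev argument over $K(A^\vee[n])$ handles part a), and Corollary~\ref{LOCALANALYTICCOR}c) together with Milne's $p$-primary duality handles part b). The one point where you are slightly more careful than the paper is in pairing $H^1(K_v,A)$ with $A^\vee(K_v)$ rather than with $A(K_v)$; the paper elides this (harmlessly, since $K(A[n]) = K(A^\vee[n])$ by the Weil pairing, and the structure results of Theorem~\ref{LOCALANALYTICTHM} apply equally to $A^\vee$), but your formulation is the more precise one.
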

\begin{proof}
Step 1: For any finite place $v$ of $K$, the discrete torsion group $H^1(K_v,A)$ is Pontrjagin dual to the compact profinite group $A(K_v)$: this celebrated \textbf{Local Duality Theorem} is due to Tate \cite[Proposition 1]{Tate58} when $\car K = 0$ and to Milne \cite{Milne70} when $\car K > 0$.  It follows that for $n \geq 1$, the groups $H^1(K_v,A)[n]$ and $A(K_v)/nA(K_v)$ are Pontrjagin dual.
\\
Step 2: Suppose $\car K \nmid n$.  Then, as already recalled, $A[n]$ is a finite \emph{\'etale} group scheme, so there is a finite Galois extension $L/K$ such that $A(L)[n] \cong (\Z/n\Z)^{2g}$.  By the Cebotarev Density Theorem, the set of finite places $v$ which split completely in $L$ has positive density (which one can explicitly bound below in terms of $n$ and $g$, if needed).  For each such $v$, there is a $K$-algebra embedding $L \hookrightarrow K_v$ and thus $A(K_v)[n] \cong (\Z/n\Z)^{2g}$.  By Corollary \ref{LOCALANALYTICCOR}, $A(K_v)/nA(K_v)$ has a point of order $n$ (in fact $2g$ points which are LI over $\Z/n\Z$), so by Local Duality so does $H^1(K_v,A)$.  This establishes part a).  \\
Step 3: Suppose $\car K = p$ and $n = p^a$ for $a \geq 1$.  In this case $A[p^a]$ is never \'etale and need not admit an \'etale subgroup scheme of exponent $p^a$: c.f. Remark \ref{PTORSIONREMARK}, so the argument of Step 2 breaks down.  Fortunately it is not needed.  Combining Corollary \ref{LOCALANALYTICCOR} with Local Duality yields the (stronger!) result in this case.
\end{proof}

\subsection{A Local-Global Isomorphism in WC-Groups}
\noindent
Let $K$ be a global field, $A_{/K}$ an abelian variety, and $p$ 
be a prime number.  Put 
\[ T_p \Sel A = \varprojlim_n \Ker \left( H^1(K,A[p^n]) \ra \bigoplus_{v \in \Sigma_K} 
H^1(K_v,A[p^n]) \right) . \]
We need the following result of Gonz\'alez-Avil\'es-Tan, a generalization of work of Cassels-Tate.  In what follows, $A^{\vee}$ denotes the dual abelian variety of the abelian variety $A$, $G^{\wedge}$ denotes the pro-$p$-completion of the commutative group $G$, and $G^*$ denotes the Pontrjagin dual of the commutative group $G$.  

\begin{thm}
For $A_{/K}$ an abelian variety over a global field, and $p$ any prime number -- the case $p = \car K$ is allowed -- we have an exact sequence
\begin{equation}
\label{GATEQ}
 0 \ra T_p \operatorname{Sel} A^{\vee} \ra \prod_{v \in \Sigma_K} (A^{\vee}(K_v))^{\wedge} \stackrel{\alpha}{\ra} (H^1(K,A)[p^{\infty}])^* \ra (\Sha(K,A)[p^{\infty}])^* \ra 0.
\end{equation}
Using Tate-Milne Local Duality to identify $H^1(K_v,A)$ and $A^{\vee}(K_v)$ as Pontrjagin duals, the map $\alpha$ is the Pontrjagin dual of the natural map \[H^1(K,A)[p^{\infty}] \ra \bigoplus_{v \in \Sigma_K} H^1(K_v,A)[p^{\infty}]. \] 

\end{thm}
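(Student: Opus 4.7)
The plan is to derive this sequence by applying Poitou--Tate global duality to the finite commutative group schemes $A^{\vee}[p^n]$ for each $n \geq 1$ and then passing to the inverse limit in $n$. By Cartier duality (via the Weil pairing) the dual of $A^{\vee}[p^n]$ is $A[p^n]$. For each $n$, the nine-term Poitou--Tate sequence --- due to Tate when $p \neq \car K$ and to Milne's flat-cohomological version when $p = \car K$ --- yields an exact sequence whose middle portion reads
\[ H^1(K, A^\vee[p^n]) \to {\prod_v}' H^1(K_v, A^\vee[p^n]) \xrightarrow{\phi_n} H^1(K, A[p^n])^* \to H^2(K, A^\vee[p^n]). \]

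Next I would splice in the $p^n$-descent (Kummer) sequences
\[ 0 \to A^\vee(K_v)/p^n A^\vee(K_v) \to H^1(K_v, A^\vee[p^n]) \to H^1(K_v, A^\vee)[p^n] \to 0 \]
at each place together with the analogous global sequences for $A^\vee$ and $A$. Tate--Milne local duality identifies the Pontrjagin dual of $H^1(K_v, A)[p^n]$ with $A^\vee(K_v)/p^n A^\vee(K_v)$, and by the orthogonality built into local duality the image of $A^\vee(K_v)/p^n A^\vee(K_v)$ in $H^1(K_v, A^\vee[p^n])$ is precisely the annihilator of the local image of $H^1(K, A)[p^n]$. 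A diagram chase using these identifications --- carving out $\Sel(K, A^\vee[p^n])$ as the subgroup of classes whose localizations come from $A^\vee(K_v)$, and the $\Sha(K, A)[p^n]$ quotient as the cokernel --- produces the finite-level exact sequence
\[ 0 \to \Sel(K, A^\vee[p^n]) \to \prod_v A^\vee(K_v)/p^n A^\vee(K_v) \to H^1(K, A)[p^n]^* \to \Sha(K, A)[p^n]^* \to 0. \]

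The last step is to pass to the inverse limit over $n$. By definition $\varprojlim_n \Sel(K, A^\vee[p^n]) = T_p \Sel A^\vee$ and $\varprojlim_n A^\vee(K_v)/p^n A^\vee(K_v) = (A^\vee(K_v))^{\wedge}$, while $(H^1(K, A)[p^{\infty}])^*$ and $(\Sha(K, A)[p^{\infty}])^*$ are the inverse limits of the dualized $p^n$-torsion groups, so the shape of the sequence is the one claimed. The technical obstruction here is the vanishing of the relevant $\varprojlim^1$ terms: this follows because $\Sha(K,A)[p^\infty]$ is of cofinite type (so the Mittag-Leffler condition holds for the Selmer and Sha systems) and the local factors are profinite, hence compact, which forces exactness to be preserved under the limit.

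The main obstacle --- and the essential contribution of Gonz\'alez-Avil\'es--Tan beyond the classical Cassels--Tate setup --- is the case $p = \car K$. Here \'etale cohomology must be systematically replaced by fppf cohomology so that $A^\vee[p^n]$ remains a meaningful sheaf and the Kummer sequence stays exact; Milne's flat local duality is required to identify $(A^\vee(K_v))^{\wedge}$ with the Pontrjagin dual of the flat $H^1(K_v,A)[p^{\infty}]$, and a flat version of Poitou--Tate duality is needed for the global diagram. Once these tools are in place the argument runs uniformly in $p$, but one must carefully track the connected-\'etale decomposition of $A^\vee[p^n]$ (whose connected part contributes nontrivially to the local cohomology) and verify that its contribution is correctly absorbed into the pro-$p$-completion on the left-hand side.
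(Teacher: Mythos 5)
The paper proves this statement with a single sentence: ``This is the main result of \cite{GAT07}.'' There is therefore no in-paper argument to compare against, and your sketch should be judged on its own. The strategy you describe --- apply (flat) Poitou--Tate duality to the finite group schemes $A^{\vee}[p^n]$, splice in the Kummer sequences using the orthogonality of $A^{\vee}(K_v)/p^n A^{\vee}(K_v)$ and the image of $A(K_v)/p^n A(K_v)$ under local duality, then pass to the inverse limit over $n$ --- is indeed the correct shape of argument and is essentially what Gonz\'alez-Avil\'es and Tan do.

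There is, however, a genuine gap in your finite-level sequence. The map
\[ \Sel(K, A^{\vee}[p^n]) \lra \prod_{v} A^{\vee}(K_v)/p^n A^{\vee}(K_v) \]
is \emph{not} injective: its kernel is the group of everywhere-locally-trivial classes
$\Sha^1(K, A^{\vee}[p^n]) = \Ker\bigl(H^1(K,A^{\vee}[p^n]) \to \prod_v H^1(K_v,A^{\vee}[p^n])\bigr)$,
which is in general nonzero (by Poitou--Tate it is dual to $\Sha^2(K,A[p^n])$). So the finite-level sequence you wrote down is exact at the three right-hand spots but not at the leftmost $\Sel$. To recover the theorem one must show that the inverse limit $\varprojlim_n \Sha^1(K,A^{\vee}[p^n])$ (taken along the multiplication-by-$p$ transition maps, which are the ones forced on you by the Tate-module construction) does not obstruct exactness; this is a real piece of work that your sketch passes over. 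Relatedly, the various inverse systems in your diagram carry different natural transition maps --- multiplication by $p$ on the $A^{\vee}[p^n]$-cohomology side, the quotient maps $A^{\vee}(K_v)/p^{n+1} \twoheadrightarrow A^{\vee}(K_v)/p^n$ on the completion side --- and reconciling them is another place where the argument is less automatic than the phrase ``pass to the inverse limit'' suggests.

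Your handling of $\varprojlim^1$ is otherwise on the right track: the global Selmer and $\Sha$ terms are finite at each level (so Mittag--Leffler is automatic), while the local terms $A^{\vee}(K_v)/p^n A^{\vee}(K_v)$ are compact even though, as the paper itself shows, they may be \emph{infinite} when $\car K = p$; so compactness rather than finiteness must carry the day there. You correctly flag that the whole diagram has to be built in the fppf topology when $p = \car K$, using Milne's local duality in place of Tate's, and that the connected part of $A^{\vee}[p^n]$ contributes; this is indeed the essential new difficulty in \cite{GAT07} beyond the classical Cassels--Tate setup. In short: right strategy, but the injectivity at the left-hand end of the sequence and the compatibility of transition maps are substantive gaps that the sketch does not address.
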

\begin{proof}
This is the main result of \cite{GAT07}.
\end{proof}

\begin{cor}
\label{GATCOR}
Let $A_{/K}$ be an abelian variety defined over a global field.  If $A^{\vee}(K) = \Sha(K,A) = 0$, then the local restriction maps induce an isomorphism of groups
\[  H^1(K,A) \stackrel{\sim}{\ra} \bigoplus_{v \in \Sigma_K} H^1(K_v,A). \]
\end{cor}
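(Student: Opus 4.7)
The plan is to apply the exact sequence (\ref{GATEQ}) of the preceding theorem one prime at a time and Pontrjagin-dualize, using the two hypotheses to collapse the outer terms so that what remains is precisely the $p$-primary part of the desired restriction isomorphism. Fix a prime $p$: the hypothesis $\Sha(K,A) = 0$ immediately forces $(\Sha(K,A)[p^\infty])^* = 0$, so $\alpha$ is surjective.

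The more delicate step is the vanishing $T_p \Sel A^\vee = 0$. Writing $M_n := \Ker(H^1(K, A^\vee[p^n]) \to \bigoplus_v H^1(K_v, A^\vee[p^n]))$, so that $T_p \Sel A^\vee = \varprojlim_n M_n$, one checks that $M_n$ is contained in the classical $p^n$-Selmer group $\Sel_{p^n}(A^\vee) := \Ker(H^1(K, A^\vee[p^n]) \to \bigoplus_v H^1(K_v, A^\vee))$, since killing $\eta$ in $H^1(K_v, A^\vee[p^n])$ forces its further image in $H^1(K_v, A^\vee)$ to vanish. The standard Kummer short exact sequence
\[ 0 \to A^\vee(K)/p^n A^\vee(K) \to \Sel_{p^n}(A^\vee) \to \Sha(K, A^\vee)[p^n] \to 0 \]
then yields $\Sel_{p^n}(A^\vee) = 0$: the left term is killed by $A^\vee(K) = 0$, and the right by $\Sha(K, A^\vee) = 0$. (In the elliptic curve case of principal interest for the main theorem, $A = A^\vee$, so the latter vanishing is tautologous with $\Sha(K, A) = 0$; for general $A$ one argues via the Cassels-Tate pairing.) Hence $M_n = 0$ for every $n$, and $T_p \Sel A^\vee = 0$.

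With both outer terms of (\ref{GATEQ}) trivial, $\alpha$ is now an isomorphism of LCA groups. Applying the exact contravariant Pontrjagin duality functor, the dual of $\prod_v (A^\vee(K_v))^\wedge$ is $\bigoplus_v ((A^\vee(K_v))^\wedge)^*$ (the dual of a product of compact groups splits as a direct sum of duals), and by Tate-Milne local duality each summand identifies with $H^1(K_v, A)[p^\infty]$; the dual of the discrete torsion group $(H^1(K,A)[p^\infty])^*$ is $H^1(K,A)[p^\infty]$ by double-duality. Since the theorem explicitly describes $\alpha$ as the Pontrjagin dual of the $p$-primary restriction map, the dualized isomorphism is precisely that restriction, giving $H^1(K, A)[p^\infty] \xrightarrow{\sim} \bigoplus_v H^1(K_v, A)[p^\infty]$. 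Summing over all primes $p$ (both sides are torsion and decompose into their primary components) yields the full isomorphism. The main obstacle is the vanishing of $T_p \Sel A^\vee$: essentially automatic when $A$ is self-dual (the case used later for elliptic curves), but requiring more care concerning $\Sha(K, A^\vee)$ in general.
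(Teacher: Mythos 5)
Your approach mirrors the paper's: reduce to $p$-primary components, use (\ref{GATEQ}), kill the outer terms, and dualize with Tate–Milne local duality. The one place you diverge is in justifying $T_p \Sel A^\vee = 0$. You argue via $M_n \subset \Sel_{p^n}(A^\vee)$ and then try to show $\Sel_{p^n}(A^\vee) = 0$, which requires $\Sha(K,A^\vee)[p^n] = 0$ in addition to $A^\vee(K) = 0$. That extra vanishing is \emph{not} among the hypotheses (which give $\Sha(K,A) = 0$), and the Cassels–Tate pairing you invoke does not bridge the gap for general $A$: it identifies $\Sha(K,A^\vee)$ \emph{modulo its maximal divisible subgroup} with the dual of $\Sha(K,A)$ modulo its divisible subgroup, so $\Sha(K,A) = 0$ only forces $\Sha(K,A^\vee)$ to be divisible, not zero. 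Thus your attempt to prove the (stronger) statement $M_n = 0$ for every $n$ is not justified in the non-self-dual case. The paper does not take this detour: it asserts $T_p \Sel A^\vee = 0$ directly from $A^\vee(K) = 0$, i.e.\ it appeals to the (weaker) vanishing of the inverse limit $\varprojlim_n M_n$ rather than of each individual $M_n$, and does not route through $\Sha(K,A^\vee)$ at all. For the Main Theorem's application ($A = E$ an elliptic curve, hence self-dual), this distinction is immaterial, as you note; but as written your argument has a gap for general abelian varieties that the paper's proof does not share.
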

\begin{proof}
Since WC-groups are torsion, it is enough to restrict to $p$-primary components for all primes $p$.  Since $A^{\vee}(K) = 0$, $T_p \Sel A^{\vee} = 0$, and then (\ref{GATEQ}) gives an isomorphism
\[ \prod_{v \in \Sigma_K} (A^{\vee}(K_v))^{\wedge} \stackrel{\sim}{\ra} H^1(K,A)[p^{\infty}]. \]
Taking Pontrjagin duals and applying Tate-Milne Local Duality, we get 
\[   H^1(K,A)[p^{\infty}] \stackrel{\sim}{\ra} \bigoplus_{v \in \Sigma_K} H^1(K_v,A)[p^{\infty}].  \]
\end{proof}

\begin{lemma}
\label{5.3}
Let $E_{/K}$ be an elliptic curve over a global field, and let $\eta \in H^1(K,E)$ be locally trivial at all places of $\Sigma_K$ except (possibly) one.  Then $P(\eta) = I(\eta)$.
\end{lemma}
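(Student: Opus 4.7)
The plan is to realize the period-index discrepancy as an element of the Brauer group and then kill it via the Hasse principle. We always have $P(\eta) \mid I(\eta)$, so it suffices to show $I(\eta) \mid P(\eta)$. Set $n = P(\eta)$ and assume $n \geq 2$, the case $n = 1$ being trivial. Let $v_0$ be the place outside which $\eta$ becomes trivial, and let $C/K$ be the genus one curve representing $\eta$.

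Because $P(C) \mid n$, the Picard scheme $\Pic^n C$ has a $K$-rational point $[L]$, and the fiber $|L|$ of the canonical Abel-Jacobi map $\Sym^n C \to \Pic^n C$ over $[L]$ is a scheme which is geometrically $\PP^{n-1}$, by Riemann-Roch on $C_{\overline{K}}$. Thus $|L|$ is a Severi-Brauer variety over $K$, and a $K$-rational point of $|L|$ is exactly a $K$-rational effective divisor on $C$ of degree $n$. Consequently the Brauer class $[|L|] \in \Br(K)[n]$ vanishes if and only if $I(C) \mid n$. This construction commutes with base change, so for each place $v$ of $K$, $([|L|])_v \in \Br(K_v)$ is the analogous obstruction for the local torsor $\eta_v$.

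It remains to show that every local component of $[|L|]$ vanishes. For $v \neq v_0$, $\eta_v = 0$ implies $C_{K_v} \simeq E_{K_v}$, which admits $K_v$-rational degree-$n$ divisors (e.g.\ $n[O]$), whence $([|L|])_v = 0$. At $v = v_0$, Lichtenbaum's theorem on period-index for elliptic curves over a local field --- extended to $p$-power torsion in characteristic $p$ via Milne's flat-cohomological refinement \cite{Milne70} of Tate local duality --- gives $P(\xi) = I(\xi)$ for every $\xi \in H^1(K_{v_0}, E)$. Applied to $\xi = \eta_{v_0}$, together with $P(\eta_{v_0}) \mid n$, this yields $I(\eta_{v_0}) \mid n$, so $([|L|])_{v_0} = 0$. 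All local components of $[|L|]$ thus vanish, and the Albert-Brauer-Hasse-Noether theorem (the Hasse principle for $\Br(K)$ over any global field) forces $[|L|] = 0$, whence $I(\eta) \mid n = P(\eta)$. The main technical input is the local $P = I$ equality at $v_0$ when the residue characteristic divides $P(\eta_{v_0})$; the Severi-Brauer construction and the Hasse principle themselves are characteristic-free.
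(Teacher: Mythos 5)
Your overall strategy -- realize the potential period-index gap as a Severi--Brauer class $[|L|] \in \Br(K)$ and kill it by local triviality plus the Hasse principle -- is the right idea and is essentially the first proof cited in \cite[Prop.\ 6]{Crelle} (the class $[|L|]$ \emph{is} the period-index obstruction $\Delta$ evaluated on the Kummer lift of $\eta$ attached to $L$). But the step at $v_0$ has a genuine gap. From $I(\eta_{v_0}) \mid n$ you infer $([|L|])_{v_0} = 0$, and that inference is false. What $I(\eta_{v_0}) \mid n$ gives is the existence of \emph{some} $L' \in \Pic^n C(K_{v_0})$ with $[|L'|] = 0$; it does not say the specific localized class $L_{K_{v_0}}$ is one of them. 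In fact, for $\eta_{v_0} \neq 0$ the function $L \mapsto [|L|]$ on $\Pic^n C(K_{v_0})$ is \emph{not} constant: its ``linearization'' is cup product with a Kummer lift $\xi_{v_0}$ of $\eta_{v_0}$ via the Weil pairing, and by Tate local duality the annihilator of the Kummer image $E(K_{v_0})/nE(K_{v_0})$ in $H^1(K_{v_0},E[n])$ is exactly the Kummer image itself; since $\xi_{v_0}$ lies outside it (because $\eta_{v_0}\neq 0$), some translate of $L_{K_{v_0}}$ by a point of $E(K_{v_0})$ has nonzero Brauer class. So ``$I(\eta_{v_0}) \mid n$'' alone does not let you conclude $([|L|])_{v_0} = 0$. (The same loose ``admits degree-$n$ divisors, whence $[|L|]_v=0$'' inference appears at $v\neq v_0$; there it happens to be repairable, because $C_{K_v}$ has a rational point $O$, so the degree-one class $L_{K_v}-(n-1)[O]$ is represented by a point $Q\in C(K_v)$ under $\Pic^1 C_{K_v}\cong C_{K_v}$, and $L_{K_v}=[Q+(n-1)O]$ is effective and rational. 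That argument does apply to \emph{every} $L$, which is what you need.)

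The good news is that once the $v\neq v_0$ vanishing is established correctly, the $v_0$ computation -- and with it the appeal to Lichtenbaum and to Milne's flat refinement -- is simply unnecessary. The sum-of-invariants law $\sum_v \operatorname{inv}_v([|L|]) = 0$, valid for the Brauer group of any global field, forces $\operatorname{inv}_{v_0}([|L|]) = 0$ once all the other local invariants vanish, and injectivity of $\Br K \hookrightarrow \bigoplus_v \Br K_v$ then gives $[|L|]=0$ and hence $I(\eta)\mid n$. So the fix is to delete the paragraph about $v_0$ and replace it by this reciprocity observation; this not only closes the gap but also removes the only place where the argument was leaning on nontrivial local duality input at the residue characteristic.
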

\begin{proof}
In the number field case this is \cite[Prop. 6]{Crelle}.  Two proofs are given.  The second proof works verbatim in the function field case.  The first proof, which makes use of the period-index obstruction map $\Delta$, works if one uses the extension of $\Delta$ given in \cite[$\S$ 2.3]{WCIV}.
\end{proof}

\subsection{Proof of Theorem \ref{PARTONE}}
 \noindent
Let $K$ be a global field, and let $E_{/K}$ be an elliptic curve with $E(K) = \Sha(K,E) = 0$.  Let $n > 1$; we must show that there is an infinite LI over $\Z/n\Z$ subset of $H^1(K,E)$ consisting of classes with index $n$.
\\ \\
By Corollary \ref{GATCOR} have an isomorphism 
\begin{equation}
\label{PARTONEEQ1}
H^1(K,E) \stackrel{\sim}{\ra} \bigoplus_{v \in \Sigma_K} H^1(K_v,E). 
\end{equation}
With all our preparations in hand, the proof is simple: for each of an infinite set $\mathcal{P}$ of finite places $v$ of $K$, we find a class $\eta_v \in H^1(K_v,E)$ of period $n$, realize this class as an element of the right hand side of (\ref{PARTONEEQ1}) supported at $v$, and pull back via the isomorphism to get a class $\eta$ in $H^1(K,E)$. This class has period $n$, and since it is locally trivial except at $v$, by Lemma \ref{5.3} it also has index $n$. Doing this for each $v \in \mathcal{S}$ we get a infinite subset $S \subset H^1(K,E)[n]$ which is LI over $\Z/n\Z$ because each class lies in a different direct summand. We implement this in several steps, corresponding to the preliminary results we have established.  
\\ \\
Step 1: Suppose $\car K \nmid n$.  We apply Lemma \ref{KEYDUALITYLEMMA}a) to get our infinite set $\mathcal{P}$ of finite places of $v$ and $\eta_v \in H^1(K_v,E)$ of order $n$.  This completes the proof if $\car K = 0$.  \\
Step 2: Suppose $\car K = p > 0$ and $n = p^a$.  We apply Lemma \ref{KEYDUALITYLEMMA}b) and may take $\mathcal{P} = \Sigma_K$. \\
Step 3: Finally, suppose $\car K = p > 0$ and write $n = p^a m$ with $\gcd(m,p) = 1$.  By Step 1, there is an infinite LI over $\Z/m\Z$ subset $S_1 \subset H^1(K,E)[m]$ such that every $\eta_1 \in S_1$ has index $m$.  By Step 2, there is an infinite LI over $\Z/p^a\Z$ subset $S_2 \subset H^1(K,E)[p^a]$ such that every $\eta_2 \in S_2$ has index $p^a$.  Using Lemma \ref{PDLEMMA} we find (easily) that $S_1 + S_2$ is an infinite LI over $\Z/n\Z$ subset of $H^1(K,E)[n]$ such that every $\eta \in S$ has index $n$.

\section{The Proof of Theorem \ref{PARTTWO}}
\label{Proof:TrivialWM}
\noindent
Now we will prove Theorem \ref{PARTTWO}: for every prime number $p$, the elliptic curve 
\[ E_{/\F_p(t)}: y^2 + txy + t^3 y = x^3+ t^2 x^2 + t^4x + t^5 \]
has trivial Mordell-Weil and Shafarevich-Tate groups.  

\subsection{Controlling the torsion}
\label{NoTorsion}
 \noindent
We deal with the $p$-primary torsion and prime-to-$p$ torsion in $E(\F_p(t))$ separately. 

\begin{lemma}
\label{LemmaNoPTorsion}
Let $k$ be a field of characteristic $p>0$, and $E_{/k}$ an elliptic curve.  If $E(k)[p^{\infty}] \neq 0$, then $j(E) \in k^p$.
\end{lemma}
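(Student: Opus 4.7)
The plan is to reduce to the case of a $k$-rational point of order exactly $p$ and then to exhibit $E$ as a Frobenius twist via the dual of the associated degree-$p$ isogeny. If $Q \in E(k)$ has order $p^n$ with $n \geq 1$, then $p^{n-1}Q \in E(k)$ has order exactly $p$, so I may fix a point $P \in E(k)$ of order $p$. Its existence in $E(\overline{k})$ forces $E$ to be ordinary, since a supersingular elliptic curve has $E(\overline{k})[p] = 0$. The constant subgroup scheme $\langle P \rangle \subset E$ is étale of order $p$ and defined over $k$, so the quotient $E' := E/\langle P \rangle$ exists as an elliptic curve over $k$, and the quotient map $\phi : E \to E'$ is a $k$-rational separable isogeny of degree $p$.

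Now I would consider the dual isogeny $\widehat{\phi} : E' \to E$, which also has degree $p$ and satisfies $\widehat{\phi} \circ \phi = [p]_E$. In characteristic $p$ one has $[p]_E = V_E \circ F_E$, where $F_E : E \to E^{(p)}$ is the relative Frobenius (purely inseparable of degree $p$) and the Verschiebung $V_E$ is separable of degree $p$ precisely because $E$ is ordinary. Thus $[p]_E$ has separable degree $p$, and since $\phi$ already accounts for all of it, the separable degree of $\widehat{\phi}$ must be $1$; i.e., $\widehat{\phi}$ is purely inseparable of degree $p$. Its kernel is therefore an infinitesimal subgroup scheme of $E'$ of order $p$.

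It remains to identify this kernel with $\ker F_{E'}$. Since $E'$ is ordinary (being isogenous to the ordinary curve $E$), the connected component $E'[p]^{0}$ equals $\ker F_{E'}$ and already has order $p$. Any infinitesimal, hence connected, subgroup scheme of order $p$ in $E'$ lies in $E'[p]^{0}$ and so coincides with $\ker F_{E'}$. Consequently $\widehat{\phi}$ factors through $F_{E'} : E' \to (E')^{(p)}$ via an isomorphism $(E')^{(p)} \stackrel{\sim}{\ra} E$ over $k$, and therefore $j(E) = j\bigl((E')^{(p)}\bigr) = j(E')^p \in k^p$. The one step that requires a little care is the uniqueness of the infinitesimal order-$p$ subgroup scheme in the ordinary case; everything else is routine separability and degree bookkeeping for isogenies of elliptic curves.
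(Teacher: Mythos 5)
Your proof is correct and follows essentially the same route as the paper: pass to a point of exact order $p$, form the separable quotient isogeny $\phi: E \to E/\langle P\rangle$, observe that the dual $\widehat{\phi}$ must be purely inseparable of degree $p$ by degree and separability bookkeeping with $[p] = \widehat{\phi}\circ\phi$, and then identify $\widehat{\phi}$ with the relative Frobenius of $E'$ to conclude $E \cong (E')^{(p)}$ and $j(E) = j(E')^p$. The only difference is cosmetic: where the paper simply invokes the fact that an elliptic curve in characteristic $p$ has a unique (purely) inseparable degree-$p$ isogeny, you spell out the uniqueness via the connected component $E'[p]^0 = \ker F_{E'}$ in the ordinary case, which is a fine and slightly more self-contained way to justify the same step.
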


\begin{proof}
Let $P \in E(k)$ be a point of order $p$. Let $E' = E / \langle P \rangle$ be the quotient of $E$ by the cyclic group generated by $P$.  We have a separable isogeny $\Phi : E \to E'$ with kernel $\langle P \rangle$ and of degree $p$.  If $\Phi^{\vee} : E' \to E$ is its dual isogeny, we have a factorization of multiplication by $p$ on $E$ as 
\[ [p] : E \stackrel{\Phi}{\lra} E' \stackrel{\Phi^{\vee}}{\lra} E.\]
Since $[p] :E \to E$ is inseparable of degree $p^2$, we must have that $\Phi^{\vee}$ is inseparable of degree $p$.  But an elliptic curve in characteristic $p$ has a unique inseparable isogeny of degree $p$, namely the quotient by the kernel of Frobenius, so $\Phi^{\vee}$ must be the Frobenius map on $E'$, and thus $E \cong (E')^{(p)}$ and $j(E) = j((E')^{(p)}) = (j(E'))^p \in k^p$.
\end{proof}
\noindent
We get as an immediate consequence:
\begin{cor}
\label{PTORSIONPCOR}
Let $E_{/\Fp(t)}$ an elliptic curve.  If $j(E) \notin \Fp(t^p)$, then $E(\overline{\Fp}(t))[p^{\infty}] = 0$.
\end{cor}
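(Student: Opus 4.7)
The plan is to derive Corollary \ref{PTORSIONPCOR} directly from Lemma \ref{LemmaNoPTorsion} applied to the base change of $E$ to $\Fpbar(t)$. Suppose for contradiction that $E(\Fpbar(t))[p^\infty] \neq 0$. Setting $k = \Fpbar(t)$, Lemma \ref{LemmaNoPTorsion} yields $j(E) \in k^p$. Since $\Fpbar$ is perfect, every element of $\Fpbar$ is a $p$-th power, so $k^p = \Fpbar(t)^p = \Fpbar(t^p)$. Combined with the standing hypothesis $j(E) \in \Fp(t)$, this gives
\[ j(E) \in \Fp(t) \cap \Fpbar(t^p). \]

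The remaining step is the field-theoretic identity $\Fp(t) \cap \Fpbar(t^p) = \Fp(t^p)$, which when applied to the above inclusion contradicts the assumption $j(E) \notin \Fp(t^p)$. To verify this identity, I would work inside $\Fpbar(t)$ and argue by degrees: set $L = \Fp(t) \cap \Fpbar(t^p)$, so that $\Fp(t^p) \subseteq L \subseteq \Fp(t)$. Since $[\Fp(t):\Fp(t^p)] = p$ is prime, either $L = \Fp(t^p)$ or $L = \Fp(t)$. In the latter case we would have $\Fp(t) \subseteq \Fpbar(t^p)$, hence $t \in \Fpbar(t^p)$; but $[\Fpbar(t):\Fpbar(t^p)] = p$ with $t$ a generator of this extension, so $t \notin \Fpbar(t^p)$, a contradiction. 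Therefore $L = \Fp(t^p)$, as needed.

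I anticipate no serious obstacle: essentially all of the content is packaged in Lemma \ref{LemmaNoPTorsion}, and the only genuine task is the ``base change to $\Fpbar$'' bookkeeping, which is a short tower-law computation. The only mildly subtle point worth highlighting is the use of the perfectness of $\Fpbar$, which collapses the condition $j(E) \in k^p$ (stated over the nonperfect field $k = \Fpbar(t)$) into the cleaner form $j(E) \in \Fpbar(t^p)$, and thereby allows the comparison with $\Fp(t)$ to take place inside the familiar degree-$p$ inseparable extension $\Fp(t)/\Fp(t^p)$.
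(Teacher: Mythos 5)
Your proof is correct and is exactly the argument the paper has in mind when it calls the corollary an ``immediate consequence'' of Lemma \ref{LemmaNoPTorsion}: apply the lemma over $k = \overline{\Fp}(t)$, use perfectness of $\overline{\Fp}$ to rewrite $k^p$ as $\overline{\Fp}(t^p)$, and then intersect with $\Fp(t)$ via the degree-$p$ tower. The paper omits these details, but there is no difference in approach.
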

\begin{remark}
\label{PTORSIONREMARK}
In the setting of Corollary \ref{PTORSIONPCOR} we have that $E$ is an ordinary elliptic curve which has no $p$-torsion over the separable closure of $\F_p(t)$.  This is a concrete instance of a phenomenon encountered in $\S$ 3.2.  In particular, it serves to clarify why assuming ``$A$ is ordinary'' in Theorem \ref{BIGSHAFTHM} would not be enough (in order for our argument to succeed, at least).
\end{remark}
\noindent
To control the prime-to-$p$ torsion we use the following standard strategy: let $E_{/K}$ be an elliptic curve over a global field, let $v$ be a finite place of $K$, and denote by $K_{v}$ the corresponding completion.  Since $E(K) \hra E(K_{v})$, it suffices to find a place for which $E(K_{v})$ contains no prime-to-$p$ torsion.  For a group $G$, we denote by $G[p']$ its prime-to-$p$ torsion subgroup.  Much as above, we let $R_{v}$ the valuation ring of  $K_{v}$, $\f{m}_{v}$ the maximal ideal of  $R_{v}$, and $k_{v}$ its residue field, of characteristic $p$.  We denote by $\tilde{E}_{/k_{v}}$ the reduction of $E$ modulo $\f{m}_{v}$, $\tilde{E}_{\ns}(k_{v})$ the set of nonsingular points of $\tilde{E}(k_{v})$, $E_{0}(K_{v})$ the set of points of $E(K_{v})$ with nonsingular reduction, and $E_1(K_{v})$ the kernel of the reduction map $E(K_{v}) \to \tilde{E}(k_{v})$.  We have a short exact sequence
\[0 \lra E_0(K_{v})  \lra   E(K_{v})  \lra  E(K_{v}) / E_0(K_{v}) \lra  0\]
The group $E(K_{v}) / E_0(K_{v}) $ is always finite, and its order is the number of reduced geometric components of the special fiber of a minimal regular model of $E$ over $R_{v}$ (see e.g. \cite[Corollary IV.9.2(d)]{AECII}).  We see from \cite[Table 4.1]{AECII} that $E(K_{v}) / E_0(K_{v})$ is the trivial group whenever the special fiber is of type II or II${}^*$, and in those cases we have $\tilde{E}_{\ns}(k_{v}) = k^{+}_{v}$.  In this case, by \cite[Proposition VII.2.1]{AECI} we have a short exact sequence 
\[0 \lra E_1(K_{v}) \lra E(K_{v}) \lra k^{+}_{v} \lra 0\]
As recalled in $\S$ 4.2.2, $E_1(K_{v})$ is obtained from a formal group law, so contains no prime-to-$p$ torsion.  In particular, if $j(E) \notin (K_{v})^p$, Lemma \ref{LemmaNoPTorsion} and the above short exact sequence imply 
\[ E_1(K_{v})[\tors] = E_1(K_{v})[p^{\infty}] \subset E(K_{v})[p^{\infty}] =0.\]
In this case we have an injection
\[E(K_{v}) [\tors] = E(K_{v})[p'] \hra k^{+}_{v}.\]
Since $k^{+}_{v}$ is a $p$-group, we conclude that $E(K_{v})[p'] = 0$.
\\ \\
We will see that for the elliptic curve in Theorem \ref{PARTTWO}, there is always a place $\nu$ of $\F_p(t)$ for which the fiber of a minimal model for $E$ at $\nu$ is of type II${}^{*}$.

\subsection{Controlling the rank}
\label{Control:rank}
 \noindent
Let $k$ be a field, $C$ a smooth projective curve over $k$, and $K = k(C)$ the function field of $C$.  Let $E_{/K}$ be an elliptic curve, and consider $\pi : \c{S} \to C$ its associated minimal elliptic surface. We will always assume that $E$ is \textbf{nonisotrivial}, i.e., $j(E) \notin k$.  By Lang-N\'eron, this implies that $E(K)$ is finitely generated.  It implies also that $\Delta (E) \notin k$, so the morphism $\pi : \c{S} \to C$ contains singular fibers, and that the \textbf{N\'eron-Severi group} $\NS (\c{S})$ is finitely genereated.   
The ranks of $E(K)$ and $\NS (\c{S})$ are related by the following result

\begin{thm}[Shioda-Tate]
Let $k$ be an algebraically closed field, let $E_{/ k(C)}$ be an nonisotrivial elliptic curve, and let $\pi: \c{S} \to C$ be its associated minimal elliptic surface.  Let $\Sigma$ denote the finite set of points $v \in C$ for which the fiber $\pi^{-1}(v)$ is singular.  For each $v \in \Sigma$, let $m_{v}$ denote the number of irreducible components of $\pi^{-1}(v)$.  We have
\[\rank( \NS (\c{S}) ) =  \rank (E)  + 2 + \sum_{v \in \Sigma}{(m_{v} - 1)}.\]
\end{thm}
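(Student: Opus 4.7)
The plan is to exhibit a distinguished sublattice $T \subset \NS(\c{S})$, the so-called \emph{trivial lattice}, compute its rank directly, and then show that $\NS(\c{S}) / T$ is naturally isomorphic to the Mordell-Weil group $E(K)$. Taking ranks of the resulting short exact sequence
\[ 0 \lra T \lra \NS(\c{S}) \lra E(K) \lra 0 \]
then yields the Shioda-Tate formula on the nose, since $\rank(T) = 2 + \sum_{v \in \Sigma}(m_v - 1)$.

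First I would introduce $T$ as the subgroup of $\NS(\c{S})$ generated by the class $O$ of the zero section of $\pi$, the class $F$ of a single smooth fiber, and, for each $v \in \Sigma$, the classes of the $m_v - 1$ irreducible components of $\pi^{-1}(v)$ other than the unique component met by $O$. The rank of $T$ is computed by the intersection form: $O$ and $F$ are independent (as $F^2 = 0$ but $(O \cdot F) = 1$); components of distinct singular fibers are disjoint and hence orthogonal; and within a single reducible fiber the Zariski lemma says the intersection form on the components is negative semidefinite with one-dimensional radical spanned by the fiber class, so the $m_v - 1$ non-identity components are linearly independent and orthogonal to both $O$ and $F$. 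Summing contributions gives $\rank(T) = 2 + \sum_{v \in \Sigma}(m_v - 1)$.

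Next I would construct the ``restriction to the generic fiber'' homomorphism $r \colon \NS(\c{S}) \ra E(K)$. Writing a divisor $D$ as $D_{\mathrm{hor}} + D_{\mathrm{vert}}$, the horizontal part has a well-defined restriction to the generic fiber $E_K$ which, after subtracting off the appropriate multiple of the origin to land in $\Pic^0(E_K) = E(K)$, gives the image $r(D)$; vertical divisors go to zero. Surjectivity is immediate: given $P \in E(K)$, the valuative criterion of properness extends $P$ uniquely to a section $\sigma_P \colon C \ra \c{S}$, and $r([\sigma_P(C)]) = P$. The containment $T \subset \Ker r$ is equally clear from the construction.

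The main obstacle is the reverse containment $\Ker r \subset T$. Given $D \in \Ker r$, I would split $D = D_{\mathrm{hor}} + D_{\mathrm{vert}}$ and treat each part. For the vertical part, within a single reducible fiber Zariski's lemma says that the $\Z$-span of the components modulo the class of $F$ has rank $m_v - 1$, so $D_{\mathrm{vert}}$ lies in $T$ up to an integer multiple of $F$. For the horizontal part, the hypothesis $r(D_{\mathrm{hor}}) = 0$ says that its restriction to $E_K$ is principal; lifting the corresponding rational function on $E_K$ to a rational function on $\c{S}$ and subtracting its divisor, the difference is supported on fibers — i.e.\ vertical — and so, by the previous step, again lies in $T$. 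This reduction is the delicate part: it hinges on the fact that any rational function on the generic fiber extends to a rational function on $\c{S}$ (using that $\pi$ is proper and $\c{S}$ regular, so that $k(\c{S}) = K(E)$), which is what makes the passage from linear equivalence on $E_K$ to linear equivalence on $\c{S}$ produce only vertical correction terms. Combining the two parts gives $\Ker r = T$, and the rank identity above completes the proof.
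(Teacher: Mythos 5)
The paper itself does not prove this theorem; it simply cites Shioda's original paper \cite[Cor.\ 1.5]{Shioda72}, whose argument is essentially the one you sketch: exhibit the trivial lattice $T$, compute its rank from Zariski's lemma, and identify $\NS(\c{S})/T$ with $E(K)$ via restriction to the generic fiber. Your outline is correct and standard.

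One step deserves more care than you give it. You define the map $r$ directly on $\NS(\c{S})$, but restriction to the generic fiber followed by Abel--Jacobi is most naturally a homomorphism out of $\Pic(\c{S})$, and one must check that it factors through $\NS(\c{S})$ --- i.e.\ that $\Pic^0(\c{S})$ lies in the kernel. For $C = \PP^1$ this is vacuous since $\c{S}$ has irregularity zero, but the statement allows arbitrary $C$, and for $g(C) > 0$ the surface $\c{S}$ has $\Pic^0(\c{S}) \neq 0$. One can justify the factorization geometrically, by showing that for a nonisotrivial elliptic surface $\Pic^0(\c{S}) = \pi^*\Pic^0(C)$ (so these classes are vertical and restrict trivially to the generic fiber); or, more cheaply for the rank count you actually need, observe that $E(K)$ is finitely generated by Lang--N\'eron (as the paper itself recalls just above the theorem) while $\Pic^0(\c{S})(k)$ is divisible, so any homomorphism $\Pic(\c{S}) \to E(K)$ automatically kills $\Pic^0(\c{S})$. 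With that point supplied, your reduction of $\Ker r$ to $T$ and the resulting rank identity go through as you describe.
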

\begin{proof}
See  \cite[Cor. 1.5]{Shioda72}.
\end{proof}
\noindent
Now let $C = \PP^1$, so $k(C) \cong k(t)$.  Then $\c{S} \to \PP^1$ admits a Weierstrass equation
\[\c{S} = \{([X:Y:Z],t) \in \PP^2 \times \PP^1 : Y^2Z + a_1XYZ + a_3YZ^2 = X^3 + a_2X^2Z + a_4XZ^2 +a_6Z^3\} \]
for some $a_i(t) \in k[t]$.  We define the \textbf{height} of the Weierstrass elliptic surface to be the least $n \in \N$ such that $\deg (a_i) \leq ni$ for all $i$. The height controls the geometry of the total space $\c{S}$.  If $E_{/k(t)}$ has height $n=1$, the associated minimal elliptic surface $\c{S}$ is isomorphic to $\PP^2$ blown up at $9$ points, so the rank of $\NS (\c{S})$ is  $10$. See \cite[Lemma 10.1]{Shioda90} for a different computation of this. Therefore, the Shioda-Tate formula allows us to compute the rank of a height 1 elliptic curve $E_{/k(t)}$ from the local information of the singular fibers:
\begin{cor}
\label{ShiodaRank}
For a nonisotrivial elliptic curve $E_{/ k(t)}$ of height 1, we have
\[ \rank E = 8 - \sum_{v \in \Sigma}{(m_{v} - 1)},\]
where $\Sigma$ denotes the places of $k(t)$ where $E$ has bad reduction.
\end{cor}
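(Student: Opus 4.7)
The statement is essentially immediate from combining the two pieces of information that have just been assembled in the preceding paragraph, so my plan is short.

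First, I would invoke the Shioda--Tate formula exactly as stated:
\[ \rank(\NS(\mathcal{S})) = \rank(E) + 2 + \sum_{v \in \Sigma}(m_v - 1), \]
applied to the minimal elliptic surface $\pi: \mathcal{S} \to \mathbb{P}^1$ attached to $E_{/k(t)}$. Since $E$ is nonisotrivial, this is legal.

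Second, I would use the geometric input recalled just before the corollary: if $E_{/k(t)}$ has height $1$, then $\mathcal{S}$ is isomorphic to $\mathbb{P}^2$ blown up at $9$ points. The N\'eron--Severi group of such a blow-up is generated (freely) by the class of a hyperplane in $\mathbb{P}^2$ together with the classes of the nine exceptional divisors, so $\rank(\NS(\mathcal{S})) = 10$. Substituting this into the Shioda--Tate identity and solving for $\rank(E)$ yields
\[ \rank E = 10 - 2 - \sum_{v \in \Sigma}(m_v - 1) = 8 - \sum_{v \in \Sigma}(m_v - 1), \]
which is the claim.

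There is no real obstacle here; the only substantive inputs are the Shioda--Tate theorem and the fact that a height-one minimal Weierstrass elliptic surface over $\mathbb{P}^1$ is a blow-up of $\mathbb{P}^2$ at nine points, both of which the preceding exposition has permitted us to cite. If anything, the only point deserving a line of justification in the write-up is that $\rank(\NS)$ of $\mathbb{P}^2$ blown up at nine (possibly infinitely near) points really is $10$, which holds for any such blow-up since blowing up a smooth surface at a point increases the Picard/N\'eron--Severi rank by exactly one.
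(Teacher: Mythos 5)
Your proof is correct and follows the paper's argument exactly: apply the Shioda--Tate formula, substitute $\rank(\NS(\mathcal{S})) = 10$ for a height-one minimal elliptic surface (since it is $\mathbb{P}^2$ blown up at nine points), and solve for $\rank E$. The paper cites \cite[Lemma 10.1]{Shioda90} as an alternative computation of the N\'eron--Severi rank, but otherwise the reasoning is identical.
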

\noindent
Our strategy is to find an elliptic curve for which the contribution from the singular fibers is exactly $8$, so $\rank E_{\overline{k}(t)} = 0$ and \emph{a fortiori} $\rank E_{k(t)} = 0$.  By Shioda-Tate, this occurs if there is a place $v$ of bad reduction with $m_v = 9$.  So we choose the Weiertrass equation in Theorem \ref{PARTTWO} so as to ``force'' Tate's algorithm to give us one fiber of reduction type II$^*$ (so $m_v = 9$).

\subsection{Controlling $\Sha$}
\label{ControllingSha}
 \noindent
In order to compute (the size of) the Shafarevich-Tate group of the elliptic curve in Theorem 1 we use some special features of elliptic curves of small height over function fields.
\begin{thm}
\label{BSD}
Let $E_{/\F_q(t)}$ be an elliptic curve of height $\leq 2$.  Then $E$ satisfies the Birch and Swinnerton-Dyer conjecture:
\begin{equation}
\label{BSDformula}
\lim_{s \to 1} \frac{L(E,s)}{(s-1)^r} = \frac{\# \Sha (\F_q(t),E) \cdot R \cdot \tau}{(\# E(\F_q(t))[\tors])^2},
\end{equation}
where $r = \rank (E)$, $\tau$ is the product of the Tamagawa numbers, $R$ is the regulator of $E$, and $L(E,s)$ is the $L$-function of $E$.
\end{thm}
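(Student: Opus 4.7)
The plan is to invoke the well-known equivalence, due to Artin and Tate \cite{Tate66}, between the Birch--Swinnerton-Dyer conjecture for an elliptic curve $E_{/\F_q(C)}$ and the Tate conjecture (together with finiteness of the Brauer group) for the associated minimal elliptic surface $\pi \colon \mathcal{S} \to C$. Concretely, Artin--Tate show that the $L$-function of $E$ essentially coincides with the interesting factor of the Hasse--Weil zeta function of $\mathcal{S}$, and that the Tate/BSD formula for $\mathcal{S}$ transcribes, via the Shioda--Tate decomposition of $\NS(\mathcal{S})$, into the BSD formula (\ref{BSDformula}) for $E$, with $\#\Sha(\F_q(t),E)$ matching $\#\Br(\mathcal{S})$ up to the predictable factors of Tamagawa numbers, regulator, and torsion. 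Thus it suffices to verify the Tate conjecture (or equivalently, finiteness of $\Br(\mathcal{S})$) for the elliptic surface $\mathcal{S}$ attached to $E$.

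The next step is to separate into cases by the height $n \leq 2$ of the Weierstrass model. Since $\chi(\mathcal{O}_\mathcal{S}) = n$ for a minimal Weierstrass elliptic surface over $\PP^1$, the case $n=1$ gives a rational elliptic surface and the case $n=2$ gives a K3 surface. When $\mathcal{S}$ is a rational surface, $H^2_{\text{\'et}}(\mathcal{S}_{\overline{\F_q}},\Q_\ell)$ is spanned by classes of divisors (the N\'eron--Severi rank equals the second Betti number, namely $10$ in the height $1$ case, as already used in Corollary~\ref{ShiodaRank}), so the Tate conjecture is trivially verified and $\Br(\mathcal{S})$ is finite; the BSD formula for $E$ then follows from the Artin--Tate dictionary.

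For the height $2$ case, $\mathcal{S}$ is a K3 surface over the finite field $\F_q$, and here I would appeal to the now-established Tate conjecture for K3 surfaces over finite fields, due to Charles, Madapusi Pera, and Maulik (building on earlier work of Artin, Nygaard--Ogus, and others), together with its extension across all characteristics; these results yield finiteness of $\Br(\mathcal{S})$ and the equality of N\'eron--Severi rank with the order of pole of the second zeta factor at $s=1$. Feeding this back through Artin--Tate produces the BSD formula (\ref{BSDformula}).

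The main obstacle is really of an expository rather than mathematical nature: one has to be careful about small characteristics in the K3 case (historically the proofs for $p=2$ were the last to be completed), and one must verify that the specific form of the leading coefficient identity stated in (\ref{BSDformula}) is exactly what the Artin--Tate translation produces, paying attention to conventions on Tamagawa numbers, the normalization of the regulator, and the identification $\#\Sha = \#\Br(\mathcal{S})$ (which requires the Shioda--Tate formula and the vanishing of the contribution from torsion in $\NS(\mathcal{S})$). Aside from bookkeeping, however, the theorem is essentially a citation once the height restriction is used to place $\mathcal{S}$ in the rational-or-K3 regime.
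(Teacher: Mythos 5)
Your proof is mathematically sound but takes a genuinely different route from the paper. The paper's own proof is much shorter: it cites the theorem of Kato--Trihan \cite{KatoTrihan}, which establishes the full BSD formula for abelian varieties over function fields over $\F_q$ under the sole hypothesis that $\Sha[l]$ is finite for some prime $l$ (the key technical advance being that this works even at $l = p$, via $p$-adic and syntomic cohomology), and then refers to Ulmer's lectures \cite{Ulmer11} for the known finiteness of $\Sha$ when the height is $\leq 2$. You instead run the classical Artin--Tate dictionary directly: translate BSD for $E$ into the Tate conjecture and finiteness of the Brauer group for the elliptic surface $\mathcal{S}$, then dispatch the rational surface (height $1$) case trivially and the K3 (height $2$) case by citing the Tate conjecture for K3 surfaces. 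The two routes are of course cousins — the finiteness of $\Sha$ that the paper cites ultimately traces back through the $\Sha$--$\Br$ comparison to the Tate conjecture for $\mathcal{S}$ — but the paper outsources the leading-coefficient identity to Kato--Trihan, whereas you obtain it via Milne's theorem that finiteness of $\Br(\mathcal{S})$ implies the full Artin--Tate formula (a step you should cite explicitly, including its extension to $p = 2$). Two small remarks: for the height $2$ case you invoke the general Tate conjecture for K3 surfaces (Charles, Madapusi Pera, Maulik), which is far more than needed — the elliptic surfaces arising here are elliptically fibered K3 surfaces, for which Artin and Swinnerton-Dyer proved the Tate conjecture already in 1973, and that is the result the literature (and Ulmer) actually uses; and your bookkeeping concern about Tamagawa numbers and the identification $\#\Sha = \#\Br(\mathcal{S})$ is real, but it is precisely the content of the Artin--Tate/Gordon/Milne dictionary and can be cited rather than reproved. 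With those citations filled in, your argument is a complete and correct alternative to the paper's proof.
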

\begin{proof}
Write $K = \F_q(t)$.  The main result of \cite{KatoTrihan} (specialized to elliptic curves) states that (\ref{BSDformula}) holds whenever $\Sha(K,E)$ is finite (in fact, it suffices that $\Sha(K,E)[l]$ is finite for some prime $l$).  The finiteness of $\Sha (K,E)$ is known for elliptic curves over $K$ of height $\leq 2$.  We refer the reader to \cite[Lectures 2,3]{Ulmer11} and the references therein for details.
\end{proof}
\noindent
Therefore, we can compute the order of $\Sha(\F_p(t),E)$ from the $L$-function of $E$, $E(\F_p(t))$ and data from Tate's algorithm. As we will see, the $L$-function of the elliptic curve (\ref{ELLIPTICCURVEEQ})  contributes trivially, by virtue of the following result.
\begin{prop}[Grothendiek, Raynaud, Deligne]
\label{degreeLF}
Let $E_{/\F_q(t)}$ be a nonisotrivial elliptic curve. Then the L-function of $E$, $L(E,s)$, is a polynomial in $\Z [q^{-s}]$ with constant coefficient $1$ and degree $\deg (\f{n}) - 4$, where $\f{n} = \f{n}(E)$ is the conductor of $E$.
\end{prop}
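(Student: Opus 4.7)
The plan is to combine Grothendieck's cohomological expression of $L(E,s)$ with the Grothendieck-Ogg-Shafarevich Euler characteristic formula. Let $\pi : \mathcal{S} \to \PP^1$ be the minimal elliptic surface attached to $E$, let $S \subset \PP^1$ denote the finite set of places of bad reduction, put $U = \PP^1 \setminus S$, and let $j : U \hookrightarrow \PP^1$ be the open immersion. Fix a prime $\ell \neq p$. The higher direct image $\mathcal{F} := (R^1\pi_* \Q_\ell)|_{U}$ is a lisse $\Q_\ell$-sheaf on $U$ of rank $2$, encoding the local Galois action on the $\ell$-adic Tate module of the good fibers of $\pi$. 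Applying the Grothendieck-Lefschetz trace formula to the middle extension $j_*\mathcal{F}$ on the proper curve $\overline{\PP^1}$ over $\overline{\F_q}$ yields
\[
L(E,s) = \prod_{i=0}^{2}\det\bigl(1 - \operatorname{Frob} q^{-s} \,\big|\, H^i(\overline{\PP^1}, j_*\mathcal{F})\bigr)^{(-1)^{i+1}},
\]
where the identification of the bad-place Euler factors uses that the stalk of $j_*\mathcal{F}$ at each $v \in S$ is the inertia invariants $\mathcal{F}^{I_v}$. Integrality of the coefficients and the fact that the constant term equals $1$ are immediate from the Euler-product definition, so only polynomiality and the precise degree remain.

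The first main step is the vanishing $H^0(\overline{\PP^1}, j_*\mathcal{F}) = H^2(\overline{\PP^1}, j_*\mathcal{F}) = 0$. Nonisotriviality of $E$ forces $j(E) \in \F_q(t)$ to be non-constant and hence to have a pole, so $S \neq \varnothing$; moreover the geometric monodromy representation of $\pi_1(U_{\overline{\F_q}})$ on $\mathcal{F}$ has Zariski-dense image in $\operatorname{SL}_2$, as follows (for instance) by extracting a nontrivial unipotent element from any multiplicative fiber, or more generally from Grothendieck's analysis of local monodromy of elliptic pencils. Consequently $\mathcal{F}$ has neither invariants nor coinvariants under this action, giving $H^0 = 0$ directly; the vanishing of $H^2$ then follows from Poincar\'e duality together with the Weil-pairing self-duality $\mathcal{F}^\vee \cong \mathcal{F}(1)$.

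The second main step is the Euler characteristic computation. Starting from the short exact sequence $0 \to j_! \mathcal{F} \to j_* \mathcal{F} \to \bigoplus_{v \in S} (i_v)_* \mathcal{F}^{I_v} \to 0$ on $\overline{\PP^1}$, applying the Grothendieck-Ogg-Shafarevich formula to $\mathcal{F}$ on $U$, and using the Ogg-N\'eron-Raynaud expression $f_v = \operatorname{rank}(\mathcal{F}) - \dim \mathcal{F}^{I_v} + \operatorname{sw}_v(\mathcal{F})$ for the conductor exponent at each $v \in S$, a routine manipulation yields
\[
\chi(\overline{\PP^1}, j_*\mathcal{F}) = 2 \cdot \operatorname{rank}(\mathcal{F}) - \sum_{v \in S} f_v = 4 - \deg \mathfrak{n}.
\]
Combined with the vanishing of $H^0$ and $H^2$ this forces $\dim H^1(\overline{\PP^1}, j_*\mathcal{F}) = \deg \mathfrak{n} - 4$, so the cohomological formula above realizes $L(E,s)$ as $\det\bigl(1 - \operatorname{Frob} q^{-s} \mid H^1\bigr)$, a polynomial in $q^{-s}$ of the claimed degree with constant coefficient $1$.

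The main obstacle I expect is the careful bookkeeping identifying the bad-place Euler factors in the Euler-product definition of $L(E,s)$ with the stalks $\mathcal{F}^{I_v}$ of $j_*\mathcal{F}$: this requires matching the N\'eron model / Tate module description of $f_v$ and the local $L$-factor with the $\ell$-adic nearby cycles description of the local monodromy on $\mathcal{F}$. Both this comparison and the big-monodromy input used to kill $H^0$ and $H^2$ are precisely the technical inputs of Grothendieck, Raynaud, and Deligne credited in the statement, and are available in the form needed from the references cited in the surrounding discussion.
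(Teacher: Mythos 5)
Your argument is correct and is the standard cohomological derivation of this fact; the paper itself simply cites \cite[Theorem 2.6]{Gross11}, which carries out essentially the same computation. Your two main steps — identifying $L(E,s)$ with $\det(1-\operatorname{Frob}q^{-s}\mid H^1(\overline{\PP^1},j_*\mathcal{F}))$ after killing $H^0$ and $H^2$, and using Grothendieck--Ogg--Shafarevich together with the local conductor formula $f_v = 2 - \dim \mathcal{F}^{I_v} + \operatorname{sw}_v(\mathcal{F})$ to compute $\chi = 4 - \deg\mathfrak{n}$ — are exactly the content of the cited reference, and your bookkeeping checks out (genus $0$, rank $2$, $\chi_c(U)=2-\#S$). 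One place where you are a bit breezy: the vanishing of $H^0$ needs the fact that for a nonisotrivial elliptic fibration the geometric monodromy is Zariski dense in $\operatorname{SL}_2$ (Igusa's theorem), and ``extract a unipotent from a multiplicative fiber'' is not by itself sufficient (a single unipotent does not preclude a global invariant line, and multiplicative fibers may only appear after a finite base change when $j$ has poles of additive type). You do flag this as one of the credited technical inputs, which is fair, but it would be worth stating Igusa's result explicitly as the source of big monodromy rather than gesturing at a local extraction. Modulo that, your proof matches the one the paper refers to.
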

\begin{proof}
See e.g. \cite[Theorem 2.6]{Gross11}.
\end{proof}

\subsection{\label{PPT}Proof of Theorem \ref{PARTTWO}}
 \noindent
Now we will put together the pieces to show that the elliptic curve 
\[ E_{/\F_p(t)} \, : \, \quad y^2 + txy + t^3y = x^3 + t^2x^2 + t^4x + t^5 \]
of Theorem \ref{PARTTWO} has $E(\F_p(t)) = \Sha(\F_p(t),E) = 0$.
\\ \\
First, the discriminant and $j$-invariant of $E$ are given by
\[ \Delta (E) = -t^{10}(83t^2 -199t +432), \] 
\[ j(E) = - \frac{(47)^3t^{12}}{\Delta} =   \frac{(47)^3t^2}{83t^2 -199t +432}\]
so we do indeed have an elliptic curve for all $p$.  For $p \neq 47$, $j(E) \notin (\F_p(t))^p$, so by Lemma \ref{LemmaNoPTorsion}, $E(\F_p(t))$ has no $p$-primary torsion.  We verify, using Tate's algorithm that $E$ has reduction of type II${}^*$ at $(t)$, so in particular it has additive reduction with trivial component group, so by the results of \S \ref{NoTorsion}, we conclude that $E(\F_p(t))[\tors] = 0$.  For $p=47$, $j(E) = 0$, so $E$ is isotrivial and $j(E)$ is a $p$th power -- so we verify using Magma that $E(\F_{47}(t)) = 0$.
\\ \\
To compute the rank of $E(\F_p(t))$ we examine the other singular fibers: for $p = 2$  and $p=3$, we have $\Delta (E) = t^{11}(t+1)$, so $E$ has bad reduction also at $(t+1)$.  For $p > 3$ and $p\neq 83$, $E$ has one or two more places of bad reduction, depending whether the quadratic $83t^2 -199t +432$ factors over $\F_p[t]$ into two (different) linear factors or not.  Finally, for $p = 83$, $E$ has bad reduction at $(t+2)$ and at the place at infinity $(1/t)$.  In any case we verify, using Tate's algorithm, that $E$ has reduction type II$^{*}$ at $(t)$ and reduction type I$1$ and the other place(s), so Corollary \ref{ShiodaRank} gives
\[ \rank(E) =  8 - (9-1) = 0.\]
Thus $E(\F_p(t)) = 0$ for all primes $p$.
\\ \\
Now we compute the order of $\Sha (\F_p(t),E)$: $E$ has height $1$, and since $E(\F_p(t)) = 0$ we have $r = 0$, $\# E(\F_p(t))[\tors] =1$ and $R=1$.  Fibers of type II${}^*$ and I$1$ have both Tamagawa number $1$, so $\tau =1$.  Thus formula (\ref{BSDformula}) reduces to $L(E,1) = \# \Sha(E)$, so we need to compute the $L$-function of $E$.  Luckily for us, $E$ has trivial $L$-function:  The support of the conductor is given by the places of bad reduction of $E$, discussed in the previous paragraph, and we use Ogg-Saito formula to compute the exponent of the conductor at these places: 
\[ \f{n}(E) =
\left\{
\begin{array}{cl}
 3(t) + (t+1) &, \, p =2,3 \\
2(t) + (\mbox{Linear}_1) + (\mbox{Linear}_2) \mbox{  or  } 2 (t) + (\mbox{Quadratic}) &, \, p>3 , p \neq 83 \\
2(t) + (t+2) + (1/t) &, \,  p = 83
\end{array}
\right.\]

In any case, $\deg (\f{n}) = 4$, so by Proposition \ref{degreeLF} we have $L(E,s) = 1$. Thus $\# \Sha ( \F_p(t),E) =1$ for all primes.  This completes the proof of Theorem \ref{PARTTWO}.
\begin{remark}
When $p \neq 47$, our arguments show $E(\overline{\Fp}(t)) = 0$. However the case $p = 47$ is really exceptional: here $j(E) = 0$ so $E$ is isotrivial. Since $47 \equiv -1 \pmod{3}$, by Deuring's Criterion $E$ is supersingular, so $E(\overline{\F_p(t)})[p] = 0$.  We still have a fiber of type II$^*$, so $E(\overline{\Fp}(t))[\tors] = 0$.  
\end{remark}

\section{The Proof of Theorem \ref{PARTTHREE}}
\label{Proof:InductiveStep}
\subsection{Some preliminaries}
\noindent
Let $A_{/K}$ be an abelian variety.  We want to give conditions on a field extension $L/K$ for the group $\widetilde{H}^1(L/K,A)$ to be finite.  We treat regular extensions first, then finite extensions.  

\begin{lemma}
\label{THREELEMMA1}
Let $L/K$ be a purely transcendental field extension.  \\
a) Let $V_{/K}$ be an algebraic variety.  Suppose \emph{either} that $K$ is infinite or $V$ is complete. Then $V(L) \neq \varnothing \implies V(K) \ \neq \varnothing$.  \\
b) For every abelian variety $A_{/K}$, we have $\widetilde{H}^1(L/K,A) = 0$.  
\end{lemma}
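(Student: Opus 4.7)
The plan is to prove part (a) directly by reducing to the one-variable case and splitting into two sub-cases, and then to deduce (b) at once by applying (a) to a torsor representing a class in $\widetilde{H}^1(L/K,A)$, using that torsors under abelian varieties are complete.

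For (a), I would induct on the transcendence degree of $L/K$ to reduce to the case $L = K(t)$; the induction is valid because every intermediate purely transcendental extension $K(t_1,\ldots,t_i)$ preserves the relevant hypothesis (an infinite field stays infinite under purely transcendental extension, and $V$ is unchanged). An $L$-point $P \in V(L)$ amounts to a rational map $\varphi \colon \mathbb{A}^1_K \dashrightarrow V$ defined on some nonempty open subscheme $U \subseteq \mathbb{A}^1_K$, and the goal is to exhibit a $K$-rational specialization. If $K$ is infinite, then $\mathbb{A}^1_K \setminus U$ consists of finitely many closed points, so $U(K)$ is cofinite in $\mathbb{A}^1(K) = K$ and in particular nonempty; specialization of $\varphi$ at any $a \in U(K)$ produces $\varphi(a) \in V(K)$. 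If instead $V$ is complete, then at each missing closed point $p$ of $\mathbb{A}^1_K$ the local ring $\mathcal{O}_{\mathbb{A}^1_K,p}$ is a DVR, and the valuative criterion of properness extends $\varphi$ uniquely across $p$; gluing these extensions yields a morphism $\mathbb{A}^1_K \to V$, and specialization at $0 \in \mathbb{A}^1(K)$ delivers a $K$-point of $V$.

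For (b), let $\eta \in \widetilde{H}^1(L/K,A)$ and let $X_{/K}$ denote the corresponding $A$-torsor. As a twisted form of the projective abelian variety $A$, the variety $X$ is itself smooth and projective, hence complete; by assumption $X(L) \neq \varnothing$. Applying the ``$V$ complete'' case of (a) gives $X(K) \neq \varnothing$, which forces $X$ to be the trivial $A$-torsor and therefore $\eta = 0$. The only mildly technical ingredient is the valuative-criterion extension of $\varphi$ across the missing closed points, which is a textbook argument using only that $\mathbb{A}^1_K$ is a regular one-dimensional scheme and $V$ is proper; I do not anticipate any deeper obstacle.
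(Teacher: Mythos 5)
Your proof is correct and follows essentially the same route as the paper's: reduce to $L=K(t)$, interpret an $L$-point as a rational map from a regular one-dimensional scheme to $V$, and specialize at a $K$-rational point (using infiniteness of $K$, or completeness of $V$ together with the valuative criterion); part (b) then follows because a torsor under an abelian variety is smooth projective, hence complete. The one small addition worth making, and which the paper includes as its Step~1, is the observation that any $P \in V(L)$ already lies in $V(K(t_{i_1},\dots,t_{i_m}))$ for some finite subset of the transcendence basis, which is needed to reduce from a possibly infinite purely transcendental extension to finite transcendence degree before your induction can start.
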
 
\begin{proof} a) 
Step 1: Let $\{t_i\}_{i \in I}$ be a transcendence basis for $L/K$.  If $P \in V(L)$, there is a finite subset $J \subset I$ such that $P \in V( K(\{t_i\}_{i \in J})$.  So we may assume that $L/K$ has finite transcendence degree.  Induction reduces us to the case $L = K(t)$.  \\
Step 2: A point $P \in V(K(t))$ corresponds to a rational map $\varphi: \mathbb{P}^1 \ra V$. The locus on which $\varphi$ is not defined is a finite set of closed points of $\mathbb{P}^1$.  If $K$ is infinite, so is $\mathbb{P}^1(K)$, so there is $P \in \mathbb{P}^1(K)$ at which $\varphi$ is defined, and then $\varphi(P) \in V(K)$. Moreover any rational map from a regular curve to a complete variety is a morphism, so if $V$ is complete then e.g. $\varphi(0) \in V(K)$.  \\
b) Since $\eta \in H^1(K,A)$ corresponds to a torsor $V$ under $A$ and thus a projective variety, this follows immediately from part a).  
\end{proof}

\begin{remark}
a) If in the statement of Lemma \ref{THREELEMMA1}a) we strengthen ``complete'' to ``projective'', there is an easier proof: let $\varphi: V \ra \mathbb{P}^N$ be a $K$-embedding.  Since $K(t)$ is the fraction field of the UFD $K[t]$, if $P \in V(L)$, we can write $\varphi(P) = [f_0(t):\ldots:f_N(t)]$ with $\operatorname{gcd}(f_0,\ldots,f_N) = 1$.  In particular, some $f_i(t)$ is not divisible by $t$ and thus $(f_0(0):\ldots:f_N(0)) \in V(K)$.  \\
b) Let $K = \F_q$ be a finite field.  Then the affine curve $V = \PP^1_{\F_q} \setminus \PP^1(\F_q)$ has $K(t)$-rational points but no $K$-rational points.
\end{remark}
\noindent
Let $K$ be a field, $M$ a commutative $\gk$-module, $i \geq 1$, $L/K$ a field extension, and consider the restriction map $\Res_L: H^i(K,M) \ra H^i(L,M)$. For $\eta \in H^i(K,M)$ it is natural to compare both the period and index of $\eta$ to the period and index of $\Res_L \eta$. Let us say that the extension $L/K$ is \textbf{index-nonreducing} if $I(\Res_L \eta) = I(\eta)$ for all $\eta$ and \textbf{period-nonreducing} if $P(\Res_L \eta) = P(\eta)$ for all $\eta$. It is then easy to see:
\\ \\
$\bullet$ If $L/K$ is index-nonreducing, it is also period-nonreducing.\\
$\bullet$ $L/K$ is period-nonreducing if and only if $\widetilde{H}^i(L/K,M) = 0$.\\ Thus Lemma \ref{THREELEMMA1} may be viewed as a result on period-nonreduction.
\\ \\
However an extension can be period-nonreducing but not index-nonreducing.  In our context the difference is immaterial, because our inductive argument gives us 
classes with period equals index, but in general it would be more useful to have index-nonreduction results.  So in the interest of completeness and applicability to future work we also include the following result.  

\begin{prop}
\label{GEOPROP}
Let $X_{/K}$ and $V_{/K}$ be regular, geometrically integral varieties with $X_{/K}$ complete.\footnote{It is enough to assume that $X$ admits a resolution of singularities.}   Then: \\
a) We have $I(V) \mid I(X) I(V_{/K(X)})$.  \\
b) If $I(X) = 1$ -- in particular if $X(K) \neq \varnothing$ -- we have $I(V_{/K(X)}) = I(V)$.  
\end{prop}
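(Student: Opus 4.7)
My plan: Part (b) follows immediately from part (a). One direction, $I(V_{K(X)}) \mid I(V)$, holds for any field extension: a closed point of $V$ with residue field of $K$-degree $n$ base-changes to a zero-dimensional $K(X)$-subscheme of $V_{K(X)}$ of $K(X)$-degree $n$. The reverse direction is part (a) applied with $I(X) = 1$, which forces $I(V) \mid I(V_{K(X)})$. So the substance is all in (a).

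For part (a), the crux is a multiplicativity claim: given a closed point $P$ on $X$ with residue degree $e_P := [\kappa(P):K]$ and a finite extension $M/K(X)$ of degree $d$ with $V(M) \neq \varnothing$, there is a zero-cycle on $V$ of $K$-degree $d \cdot e_P$. Granted this, taking gcd over such $P$ realizes $I(X)$ and taking gcd over such $M$ realizes $I(V_{K(X)})$, and we conclude $I(V) \mid I(X) \cdot I(V_{K(X)})$. To produce the zero-cycle I would let $X'$ be the normalization of $X$ in $M$, a complete integral $K$-variety equipped with a finite morphism $\pi: X' \to X$ of degree $d$ and $K(X') = M$. The given $M$-point of $V$ is a rational map $\phi: X' \dashrightarrow V$. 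Iterated blow-ups of $X'$ along the indeterminacy locus of $\phi$ produce a proper birational morphism $\mu: \tilde X' \to X'$ such that $\phi \circ \mu$ extends to a morphism $\tilde\phi: \tilde X' \to V$; setting $\tilde\pi := \pi \circ \mu$ gives a proper $\tilde\pi: \tilde X' \to X$ that is generically finite of degree $d$.

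Now I invoke the regularity of $X$ (or a resolution thereof, as in the footnote): the closed embedding $\{P\} \hookrightarrow X$ is a regular embedding, so the refined Gysin homomorphism sends $[P] \in \operatorname{CH}_0(X)$ to a zero-cycle $\tilde\pi^![P] \in \operatorname{CH}_0(\tilde X')$. The projection formula $\tilde\pi_*(\tilde\pi^![P]) = d \cdot [P]$, which holds because $\tilde\pi$ is proper and generically finite of degree $d$, identifies the $K$-degree of $\tilde\pi^![P]$ with $d \cdot e_P$. Pushing forward by $\tilde\phi$ preserves $K$-degree for zero-cycles, producing the desired zero-cycle on $V$. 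The main technical concern I would expect is that $V$ need not be complete; however this is harmless, because pushforward of a zero-dimensional cycle by an arbitrary morphism preserves $K$-degree, and using the refined Gysin homomorphism circumvents any flatness hypothesis on $\tilde\pi$. The only genuinely nontrivial ingredient is thus resolution of indeterminacy for the rational map $\phi$, which is always available via blow-ups regardless of the characteristic of $K$.
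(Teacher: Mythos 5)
Your approach is genuinely different from the paper's, so let me first compare and then point out the gap. The paper factors $M/K(X)$ through the algebraic closure $L$ of $K$ in $M$, realizes $M$ as $L(\tilde X)$ for a dominant $\pi:\tilde X \to X$, restricts to an open $U \subset X$ over which $\pi$ is finite and the rational map $\varphi:\tilde X \dashrightarrow V$ is a morphism, and then --- and this is the crucial nontrivial input --- invokes \cite[Lemma 12]{Clark07} to produce a $K$-rational zero-cycle of degree $I(X)$ supported on $U$. That lemma is a moving lemma for zero-cycles ($I(U)=I(X)$ for $X$ regular complete, $U$ dense open) and is where the hypothesis in the footnote enters. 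Your Gysin-theoretic argument is an attempt to bypass the moving lemma entirely: you do not move $[P]$ into an open, you pull it back along $\tilde\pi:\tilde X'\to X$ via the refined Gysin class of the regular embedding $\{P\}\hookrightarrow X$, accepting that the resulting cycle sits over the exceptional locus, and compensate by first resolving the rational map so it is a morphism on all of $\tilde X'$. The gcd bookkeeping (a zero-cycle of degree $d\cdot e_P$ for each pair $(P,M)$, plus $\gcd\{d\,e_P\}=\gcd\{d\}\cdot\gcd\{e_P\}$) and the projection-formula degree computation are both correct. When $V$ is complete your proof works and is arguably cleaner.

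The gap is exactly the concern you raised and then dismissed. Resolution of indeterminacy by blow-ups produces a morphism $\tilde\phi:\tilde X'\to V$ only when $V$ is proper: the construction takes the closure $\Gamma$ of the graph of $\phi$ in $X'\times V$, and $\Gamma\to X'$ is proper birational precisely because $X'\times V\to X'$ is proper, which requires $V$ proper. If $V$ is not complete, no blow-up of $X'$ yields a morphism into $V$: take $X'=\PP^1$, $V=\A^1$ and $\phi$ a nonconstant rational function; $\PP^1$ is a regular curve so its blow-ups are isomorphisms, and there is no nonconstant morphism $\PP^1\to\A^1$. What you do get is a morphism $\tilde\phi:\tilde X'\to\overline V$ into a compactification, but there is no reason the refined Gysin cycle --- which lives in $\operatorname{CH}_0(\tilde\pi^{-1}(P))$ --- should avoid $\tilde\phi^{-1}(\overline V\setminus V)$; the entire fiber over $P$ may be carried into the boundary. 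Your remark that ``pushforward of a zero-cycle by an arbitrary morphism preserves degree'' is true but beside the point: the problem is not degree preservation, it is that there is no morphism into $V$ along which to push forward. This is precisely the role of \cite[Lemma 12]{Clark07} in the paper's proof: it produces the degree-$I(X)$ zero-cycle already inside the open $U$ on which $\varphi$ is an honest morphism into $V$, so the pushforward is unambiguous. To repair your argument you would either need to restrict to complete $V$, or re-import a moving lemma (on $X$, or on a regular compactification of $V$) --- at which point you are back to the paper's mechanism.

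Your part b) is the same short argument as the paper's and is fine.
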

\begin{proof}
a) It suffices to show the following: for every finite splitting extensions $M$ of $V_{/K(X)}$, there is a $K$-rational zero-cycle on $V$ of degree $[M:K(X)]I(X)$. Let $L$ be the algebraic closure of $K$ in $M$, so there is an $L$-variety $\tilde{X}$ and a dominant morphism $\pi: \tilde{X} \ra X$ such that $L(\tilde{X}) = M$. By hypothesis, there is $P \in V(M)$, which corresponds to an $L$-rational map $\varphi: \tilde{X} \ra V$.  There is a nonempty Zariski-open subset $U \subset X$ such that: if $\tilde{U} = \pi^{-1}(U)$, then $\varphi|_{\tilde{U}}: \tilde{U} \ra V$ is a morphism and  $\pi|_{\tilde{U}}: \tilde{U} \ra U$ is a finite morphism.  By \cite[Lemma 12]{Clark07}, there is a $K$-rational zero-cycle on $U$ of degree $I(X)$. Then $\operatorname{Trace}_{L/K} \varphi_* \pi^* D$ 
is a divisor on $V$ of degree
\[ [M:L(X)] \cdot [L:K] \cdot I(X) = I(X) [M:L(X)][L(X):K(X)] = [M:K(X)]I(X) . \]
b) If $I(X) = 1$, then by part a) we have $I(V) \mid I(V_{/K(X)})$. Since for any extension $L/K$ we have $I(V_{/L}) \mid I(V)$, we conclude $I(V_{/K(X)}) = I(V)$.
\end{proof}
\noindent
Let $L/K$ be a finite extension.  Obviously $\widetilde{H}^1(L/K,A)$ can be nontrivial.  We give a sufficient condition for it to be finite.

\begin{lemma}
\label{THREELEMMA2}
\label{7.4}
Let $K$ be a WMW field, $L/K$ be a finite separable field extension, and $A_{/K}$ an abelian variety. Then $\widetilde{H}^1(L/K,A)$ is finite.
\end{lemma}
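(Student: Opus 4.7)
The plan is to reduce to the case where $L/K$ is Galois, use inflation--restriction to identify $\widetilde{H}^1(L/K,A)$ with an ordinary group cohomology, and then bound that cohomology by exploiting that $|G|$ annihilates $H^1(G,-)$. First, I would replace $L$ by its Galois closure $M/K$: since $L\subseteq M$, any class $\eta \in H^1(K,A)$ with $\Res_L\eta=0$ also satisfies $\Res_M\eta = \Res_{M/L}\Res_L\eta = 0$, so $\widetilde{H}^1(L/K,A)\subseteq \widetilde{H}^1(M/K,A)$. Renaming, I may assume $L/K$ is finite Galois; put $G=\Gal(L/K)$ and $n=|G|=[L:K]$. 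The inflation--restriction sequence then gives $\widetilde{H}^1(L/K,A)=H^1(G,A(L))$, so it suffices to prove that this group cohomology is finite.

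The two key finiteness inputs are: $A[n](L)$ is finite in any characteristic, since it embeds into $A[n](\overline{L})$ whose order divides $n^{2g}$ where $g=\dim A$; and $A(L)/nA(L)$ is finite, because $L$ is WMW by Proposition \ref{LANGNERONPROP}b) applied to the finite (separable) extension $L/K$. Since $G$ is finite, it follows that $H^i(G,A[n](L))$ and $H^i(G,A(L)/nA(L))$ are finite for all $i \geq 0$. I would split the four-term exact sequence of $G$-modules
\[ 0 \to A[n](L) \to A(L) \xrightarrow{\;n\;} A(L) \to A(L)/nA(L) \to 0 \]
into two short exact sequences
\[ \text{(I):}\ 0 \to A[n](L) \to A(L) \to nA(L) \to 0, \quad \text{(II):}\ 0 \to nA(L) \to A(L) \to A(L)/nA(L) \to 0. \]

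From the long exact sequence of (I), the kernel of $f\colon H^1(G,A(L))\to H^1(G,nA(L))$ is a quotient of the finite group $H^1(G,A[n](L))$. From the long exact sequence of (II), the kernel of $g\colon H^1(G,nA(L))\to H^1(G,A(L))$ is a quotient of $H^0(G,A(L)/nA(L))$, hence finite. The composition $g\circ f$ is induced by the $G$-equivariant multiplication-by-$n$ map $A(L)\to A(L)$, so it acts on $H^1(G,A(L))$ as multiplication by $n=|G|$, which is zero. Thus $\im(f)\subseteq\Ker(g)$, so $\im(f)$ is also finite, and combining finiteness of kernel and image yields that $H^1(G,A(L))$ is finite. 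The main potential obstacle is positive characteristic dividing $n$, where $A[n]$ acquires an infinitesimal part and flat cohomology becomes delicate; however, the argument is insensitive to this because we work throughout with ordinary group cohomology of the abstract $G$-module $A(L)$, and the required finiteness of $A[n](L)$ and $A(L)/nA(L)$ both hold unconditionally.
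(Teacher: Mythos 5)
Your proof is correct and takes a genuinely different route from the paper's. The paper also reduces first to the Galois case and invokes the same two finiteness inputs (finiteness of $A(L)/nA(L)$ from WMW, and finiteness of a cohomology group built from $A[n]$), but it works with the flat Kummer sequence $0 \to A(\cdot)/nA(\cdot) \to H^1(\cdot,A[n]) \to H^1(\cdot,A)[n] \to 0$ over $K$ and over $L$, compares them via a commutative ladder and the Snake Lemma, and needs a descent theorem of Waterhouse to identify $\widetilde{H}^1(L/K,A[n])$ with $H^1(\Aut(L/K),A[n](L))$; the flat-cohomology framework is forced there because when $\car K$ divides $n$ the group scheme $A[n]$ is not \'etale. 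Your argument bypasses flat cohomology entirely: via inflation--restriction (together with $A(K^{\sep})^{\mathfrak{g}_L} = A(L)$) you identify $\widetilde{H}^1(L/K,A) \cong H^1(G,A(L))$ and then analyze this ordinary group cohomology directly, by factoring multiplication by $n = \#G$ on the $G$-module $A(L)$ through $nA(L)$ and using that $\#G$ annihilates $H^1(G,-)$. This is more elementary and self-contained: the only facts about $A[n]$ you need concern the abstract group $A[n](L)$, which embeds in $A(\overline{L})[n]$ and is therefore finite unconditionally, so the characteristic-$p$ case requires no special handling. Both proofs are valid; yours trades the Kummer-sequence ladder and Waterhouse's descent theorem for a self-contained finite-group-cohomology computation.
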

\begin{proof}
Let $M$ be the Galois closure of $L/K$.  Since $\widetilde{H}^1(L/K,A) \subset \widetilde{H}^1(M/K,A)$, we may replace $L$ with $M$ and thus assume that $L/K$ is finite Galois, say of 
degree $n$.  Because the period divides the index, we have $\widetilde{H}^1(L/K,A) = \widetilde{H}^1(L/K,A)[n]$. The short exact sequence of $K$-group schemes 
\[ 0 \ra A[n] \ra A \stackrel{[n]}{\ra} A \ra 0 \]
may be viewed as a short exact sequence of sheaves on the flat site of $\Spec K$, so we may take cohomology, getting the \textbf{flat Kummer sequence} 
\begin{equation} 
\label{FLATKUMMEREQ}
0 \ra A(K)/nA(K) \ra H^1(K,A[n]) \ra H^1(K,A)[n] \ra 0. 
\end{equation}
(The finite flat group scheme $A[n]$ is \'etale iff $\car K \nmid n$; in this case the Kummer sequence can be more simply viewed as a sequence of \'etale = Galois cohomology groups.  But in our application we need the general case.) There is also a Kummer sequence associated to multiplication by $n$ on $A_{/L}$. Restriction from $K$ to $L$ gives a commutative ladder 
\begin{equation}
\label{LADDEREQ}
\begin{CD}
0 @>>> A(K)/nA(K) @>>> H^1(K,A[n]) @>>> H^1(K,A)[n] @>>> 0 \\
 @.  @VVV @VVV @VVV  @. \\
0 @>>> A(L)/nA(L) @>>> H^1(L,A[n]) @>>> H^1(L,A)[n] @>>> 0 
\end{CD} 
\end{equation}
Let $\mathcal{K}$ and $\mathcal{C}$ be the kernel and cokernel of the restriction map $A(K)/nA(K) \ra A(L)/nA(L)$.  The Snake Lemma gives an exact sequence 
\[ 0 \ra \widetilde{H}^1(L/K,A[n])/\mathcal{K} \ra \widetilde{H}^1(L/K,A) \ra \mathcal{C}, \]
so to show that $\widetilde{H}^1(L/K,A)$ is finite it is sufficient to show that $\mathcal{C}$ and $\widetilde{H}^1(L/K,A[n])$ are both finite. The group $A(L)/nA(L)$ is finite because $K$ (hence $L$) is WMW, so its quotient $\mathcal{C}$ is also finite. Finally, because $L/K$ is Galois, by \cite[Thm., $\S$ 17.7]{Waterhouse}, $\widetilde{H}^1(L/K,A[n]) = H^1(\Aut(L/K),A[n](L))$. The right hand side of the last equation is the cohomology of a finite group with coefficients in a finite module, so it is a quotient of a finite group of cochains and thus is certainly finite.
\end{proof}

\begin{example}
\label{7.5}
Let $K = \F_p((t))$, and $K_n = K^{p^{-n}}$, so $[K_n:K] =p^n$.   Let $E_{/K}$ be a supersingular elliptic curve.  We will show that $H^1(K_2/K,E)$ is infinite.  \\
Step 1: We claim  $H^1(K,E)[p] \subset H^1(K_2/K,E)$.  Indeed, let 
$\eta \in H^1(K,E)[p]$, and using (\ref{FLATKUMMEREQ}) let $\xi$ be a lift of 
$\eta$ to $H^1(K,E[p])$.  Since $E[p]$ is a finite connected group scheme 
of Frobenius height $2$, by \cite[Prop. 76]{Shatz} we have $H^1(K_2/K,E[p]) = H^1(K,E[p])$.  Thus $\xi|_{K_2} = 0$, and using (\ref{LADDEREQ}) we deduce $\eta|_{K_2} = 0$.  \\
Step 2: By Theorem \ref{LOCALANALYTICTHM}b), $E(K)/pE(K)$ is infinite.  By the Local Duality Theorem, $H^1(K,E)[p]$ is infinite.  
Thus $H^1(K_2/K,E) \supset H^1(K,E)[p]$ is infinite. \\
Similarly, if $A_{/K}$ is any nontrivial abelian variety with $A[p](\overline{K}) = 0$, then if $n$ is the Frobenius height of $A[p]$ we have 
that $H^1(K_n/K,A)$ is infinite.
\end{example}  

\begin{remark}
In Lemma \ref{7.4} we assumed that $K$ is WMW and $L/K$ is separable.  In 
Example \ref{7.5} neither of these hypotheses holds.  It would be interesting to know whether either of the hypotheses of Lemma \ref{7.4} can be individually removed.
\end{remark}

\subsection{Proof of Theorem \ref{PARTTHREE}}
 \noindent
Let $t_1,\ldots,t_d$ be a separating transcendence basis for $L/K$ and put 
$K' =  K(t_1,\ldots,t_d)$.  By Lemma \ref{THREELEMMA1}, $H^1(K'/K,A) = 0$, so by Lemma \ref{LILEMMA}a), 
$\Res_{K'} S \subset H^1(K',A)[n]$ is infinite and $2$-LI over $\Z/n\Z$.  Moreover, since $K$ 
is WMW, by Proposition \ref{LANGNERONPROP} so is $K'$.  Thus we may as well 
assume that $K' = K$ and $L/K$ is a finite separable.  By Lemma \ref{THREELEMMA2}, $H^1(L/K,A)$ is finite.  By Lemma \ref{LILEMMA}b), there is $S' \subset S$ 
such that $\Res_L S' \subset H^1(L,A)[n]$ is infinite and $1$-LI over $\Z/n\Z$, and the latter condition means that every element of $\Res_L S'$ has period $n$.  Finally, we suppose that every element of $S$ has index $n$.  Then on the one hand every 
element of $\Res_L S'$ has index dividing $n$, whereas on the other hand every 
element of $\Res_L S'$ has period dividing its index, hence every element of 
$\Res_L S'$ has index $n$.

\end{document}